\documentclass[12pt,a4paper]{amsart}
\usepackage[margin=1in]{geometry} 

\usepackage{amsmath,amssymb,amsthm,amscd}

\usepackage{titletoc}

\usepackage{tikz}
\usetikzlibrary{calc}
\usepackage[colorlinks=true, linkcolor=blue, anchorcolor=blue, citecolor=blue, filecolor=blue, menucolor= blue, urlcolor=blue,pdfencoding=auto, psdextra, pagebackref=true]{hyperref}
\usepackage{bookmark}

\usepackage{verbatim}
\usepackage{multirow}
\usepackage{mathtools}
\usepackage{enumitem}
\usepackage{pgf,tikz,mathrsfs}
\usetikzlibrary{arrows}
\usepackage[normalem]{ulem}
\usepackage{comment}
\usepackage{multicol}
\usepackage{xfrac,xcolor}
\usepackage{graphicx}
\numberwithin{equation}{section}
\usepackage{pstricks-add,pst-plot,pst-func}
\usepackage{chngcntr}
\usepackage{setspace}

\newtheorem{theorem}{Theorem}[section]
\newtheorem{proposition}[theorem]{Proposition}
\newtheorem{lemma}[theorem]{Lemma}
\newtheorem{corollary}[theorem]{Corollary}

\newtheorem{prob*}{Problem}

\newtheorem{conjecture}[theorem]{Conjecture}
\newtheorem*{theorem*}{Theorem}

\theoremstyle{definition}
\newtheorem{definition}[theorem]{Definition}

\newtheorem{example}[theorem]{Example}
\newtheorem{remark}[theorem]{Remark}

\usepackage{xparse}
\usetikzlibrary{matrix,backgrounds}
\pgfdeclarelayer{myback}
\pgfsetlayers{myback,background,main}

\newcommand{\field}{ \ensuremath{\mathbb{C}}}
\newcommand{\CC}{ \ensuremath{\mathbb{C}}}
\newcommand{\PP}{ \ensuremath{\mathbb{P}}}

\newcommand{\mO}{\mathcal{O}}

\DeclareMathOperator{\coker}{coker}

\DeclareMathOperator{\PGL}{PGL}
\DeclareMathOperator{\Sym}{Sym}

\DeclareMathOperator{\GGr}{\mathbb{G}r}
\DeclareMathOperator{\Pic}{Pic}

\newcommand{\rank}{\ensuremath{\mathrm{rank}}\hspace{1pt}}

\def\CC{{\mathbb C}}

\def\P#1{{\bf P}^#1}

\def\cocoa{{\hbox{\rm C\kern-.13em o\kern-.07em C\kern-.13em o\kern-.15em A}}}

\title{Weddle schemes}

\author[L.~Chiantini]{Luca Chiantini}
\address[L.~Chiantini]{Dipartimento di Ingegneria dell'Informazione e Scienze Matemati\-che, Universit\`a di Siena, Italy}
\email{luca.chiantini@unisi.it}

\author[{\L}.~Farnik]{{\L}ucja Farnik}
\address[{\L}.~Farnik]{Department of Mathematics, University of the National Education Commission, Krakow,
   Pod\-cho\-r\c a\.zych~2,
   PL-30-084 Krak\'ow, Poland}
\email{lucja.farnik@gmail.com}

\author[G.~Favacchio]{Giuseppe Favacchio}
\address[G.~Favacchio]{Dipartimento di Ingegneria, Universit\`a degli studi di Palermo,
Viale delle Scienze,  90128 Palermo, Italy}
\email{giuseppe.favacchio@unipa.it}

\author[B.~Harbourne]{Brian Harbourne}
\address[B.~Harbourne]{Department of Mathematics,
University of Nebraska,
Lincoln, NE 68588-0130 USA}
\email{brianharbourne@unl.edu}

\author[J.~Migliore]{Juan Migliore} 
\address[J.~Migliore]{Department of Mathematics,
University of Notre Dame,
Notre Dame, IN 46556 USA}
\email{migliore.1@nd.edu}

\author[T.~Szemberg]{Tomasz Szemberg}
\address[T.~Szemberg]{Department of Mathematics, University of the National Education Commission, Krakow,
   Podcho\-r\c a\.zych~2,
   PL-30-084 Kra\-k\'ow, Poland}
\email{tomasz.szemberg@gmail.com}

\author[J.~Szpond]{Justyna Szpond}
\address[J.~Szpond]{Department of Mathematics, University of the National Education Commission, Krakow,
   Podcho\-r\c a\.zych~2,
   PL-30-084 Krak\'ow, Poland}
\email{szpond@gmail.com}

\subjclass[2020]{
13C40, 
14M05, 
14M07, 
14M10, 
14M12, 
14N05, 
14N20.
}

\keywords{ projection, cones in projective spaces, unexpected hypersurfaces, Weak Lefschetz Property, Weddle surface. }

\begin{document}

\begin{abstract}

The classical Weddle surface is the locus of vertices of quadric cones through six points in $\PP^3$ in linear general position. Equivalently, it is the closure of the locus of centers of projection from which those six points map to six points on a plane conic.
Motivated by this 1850 construction of T.\ Weddle, we introduce \emph{$d$-Weddle schemes} for finite point sets $Z\subset \PP^n$, defined by an analogous projection-to-degree-$d$ condition.
Our main tool is Macaulay duality, which yields a natural multiplication map in an Artinian algebra defined by powers of linear forms.
This viewpoint connects $d$-Weddle schemes to unexpected cones and interprets them as non-Lefschetz loci for these multiplication maps. Parallel to this, we give an analysis from the point of view of interpolation matrices, and we explain the connections between these approaches.

For a general set $Z\subset \PP^n$ of $\binom{d+n}{n}$ points, we show that the $d$-Weddle scheme is a hypersurface and we compute its degree. We also study general sets whose cardinalities are ``near" such a binomial coefficient, where the Weddle scheme has higher codimension. Returning to sets of six points (not always in linear general position), we discuss special configurations in which the appropriate Weddle scheme is reducible, or even nonreduced. 

\end{abstract}

\maketitle
\setcounter{tocdepth}{1}
\tableofcontents


\section*{Introduction}
 Among quartic surfaces, Weddle surfaces stand out as a distinguished and historically rich family, intertwining classical geometry with modern perspectives. Our aim is to provide a unified account of their classical theory and recent developments.

We begin with the mathematicians involved, as the development of mathematics is inseparable from the people who created it.

\subsection*{History} A central figure in nineteenth century projective geometry was Michel Chasles (1793--1880).
In 1837 he published \cite{Chasles1}, written in response to a prize question, posed by the Royal Academy of Brussels, on modern methods in geometry, with particular emphasis on reciprocal polars and related duality principles.
In Note~XXXIII (p.~403) Chasles asserted that the vertices of quadric cones through six given points in $\PP^3$ form a \emph{space curve} of degree~$3$:
\emph{``Le lieu g\'eom\'etrique des sommets des c\^ones du second degr\'e, qui passent tous par
six points donn\'es dans l'espace, est la courbe \`a double courbure, du troisi\`eme degr\'e,
d\'etermin\'ee par ces six points.''}

More than a decade later, in 1850, Thomas Weddle (1817--1853) published \cite{WEDDLE}.
He begins by observing that some results in an earlier paper of his had already been anticipated by Chasles, and he proceeds to obtain new ones.
He concludes with the following footnote, which corrects Chasles’s claim: the locus of vertices of quadric cones through six points is \emph{not} a cubic space curve, but a quartic surface.
\begin{quote}\footnotesize\itshape 
    \noindent\textasteriskcentered\ 
 I cannot forbear expressing a doubt, though with some diffidence, as to the correctness
of one of the theorems which M.\ Chasles has given in Note XXXIII. It is as follows:

The locus of the vertex of the cone of the second degree which passes through six
given points in space, is the curve of double curvature of the third degree determined by
(\emph{i.e.} passing through) these six points.

Now the analysis to which I have subjected the problem leads me to conclude that the
locus is a \emph{surface} of the fourth degree, which, as M.\ Chasles's own investigation
shews, must pass through the curve of double curvature of the third degree determined
by the six points. In fact, Chasles only proves the following theorem:\,\textemdash\,
Every cone which has its vertex on a curve of double curvature of the third degree, and
which passes through the curve, is of the second degree. And his error seems to have
arisen from momentarily imagining that every cone of the second degree passing through
the six points, will also pass through the curve of the third degree determined by these
points.
\end{quote}
\noindent Weddle did not give a proof of this statement, and one seems to have appeared only in 1861, when Arthur Cayley presented an analytic argument \cite{Cayley}. In 1870 Chasles again reported on the state of geometry in France under the auspices of the Minist\`ere de l'Instruction Publique, resulting in \cite{Chasles2}. He recalls the problem on p.~59,  but it is not until page p.~250 that he writes the fact that the locus is a surface of degree $4$. He says that this was stated by Weddle and proved by Cayley. 
On p.~92 he refers to Weddle as a \emph{``jeune g\'eom\`etre enlev\'e pr\'ematur\'ement aux sciences''} (``a young geometer taken too early from science'').
This quartic surface has come to be known as the \emph{Weddle surface}.
A detailed contemporary account of Weddle's life and work appears in the obituary notice published in \emph{Monthly Notices of the Royal Astronomical Society} (1854), \cite{RAS_Weddle}.

For the next sixty years or so the Weddle surface continued to attract attention, with at least fifteen papers devoted to its projective geometry, its plane sections, and its connections with other classical quartic surfaces and special point configurations. 
The Weddle surface also reappears in the modern literature  for several properties including being  birational to the Kummer surface \cite{CasselsFlynn} (this was classically known \cite{Baker1904, Bateman1905, Schottky}),  a quartic surface in $\mathbb{P}^3$ with the maximal possible number of sixteen ordinary double points, being the moduli space of some extension rank 2 vector bundles with trivial determinant on smooth genus $2$ curves \cite{Bolognesi2007}. Clemens \cite{Clemens1983} constructs some $4$-dimensional abelian varieties as intermediate Jacobians of desingularized double covers of $\PP^3$ branched over a Weddle surface.

\subsection*{Introduction to the problem.} Among the classical contributions, we are substantially inspired by Emch’s work \cite{EMCH}, which not only establishes additional facts about the Weddle quartic surface, but also suggests new directions for the subject. Our purpose in this paper is to give a modern treatment of the Weddle surfaces  and to develop new extensions from the viewpoint of the locus of vertices of cones.

We work over the field of complex numbers $\mathbb C$.
Given a finite set of points $Z\subset \PP^n$ and an integer $d\ge 1$, a natural interpolation question asks for the existence of degree $d$ cones containing $Z$ with a given vertex $P\in \PP^n$.
Under suitable assumptions on $Z$ and $d$, for a general point $P$, such cones typically do not exist; nevertheless, the locus of {\emph exceptional} points $P$ that do occur as vertices of degree $d$ cones through $Z$ can be nonempty and carries interesting geometry.

The first and most classical instance is the case $n=3$ and $d=2$ recalled above.
For six points $Z\subset \PP^3$ in linear general position, the vertices of quadric cones through $Z$ is a quartic surface, the \emph{Weddle surface}.
A convenient reformulation is in terms of projection:
a point $P\in \PP^3\setminus Z$ lies on the Weddle surface if and only if the projection
$\pi_P\colon \PP^3\dashrightarrow \PP^2$ maps $Z$ to six points lying on a plane conic.
This ``projection to a curve'' perspective was extended and revisited by Emch, Moore and others \cite{EMCH,moore}, and it suggests a general principle:
one can study loci of centers of projection from which a finite point set acquires an
unexpected degree-$d$ relation in the target.

Motivated by this idea, we introduce the \emph{$d$-Weddle locus} of $Z$ and its natural scheme structure,
the \emph{$d$-Weddle scheme} (see Section~\ref{sec:defWeddle}).

\medskip\noindent\textbf{Organization of the paper.}
In Section \ref{sec:2approaches} we describe two ways to construct the $d$-Weddle scheme of a given set of points:  via an interpolation matrix and via a matrix arising from Macaulay duality.
In Section~\ref{sec: general points in Pn} we start a systematic study of the Weddle schemes in $n$-dimensional projective spaces.   
In Section~\ref{six pt section} we discuss the geometry of the surfaces arising as Weddle schemes of six points in $\PP^3$ in more detail, showing that it changes even for points in linear general position. Dropping the LGP assumption leads to a menagerie of various surfaces of degree $4$, not necessarily reduced and irreducible.

In Section~\ref{sec:WeddleGeneralPoints} we compute $d$-Weddle loci for certain general point sets in $\PP^3$
of cardinality other than $6$, correcting a  mistake in \cite{EMCH}, and we then extend the discussion to
sets $Z\subset \PP^n$ for $n\ge 3$.

In Section~\ref{sec:LefschetzConnections} we explain the surprising equivalence with the non-Lefschetz schemes of \cite{BMMN}.
Finally, in Section~\ref{sec:Future} we discuss further directions and themes for future research.

\section{The \texorpdfstring{$d$}{d}-Weddle locus for a finite set of points in projective space}\label{sec:defWeddle}

By a cone of degree $d$ with vertex $P$ we mean a hypersurface $X\subset\PP^n$ of degree $d$ such that for every point $Q$ in $X$ the line joining $P$ and $Q$ is in $X$. This forces $P$ to have multiplicity $d$ in $X$.

Let $Z=\{P_1,\ldots,P_r\}\subset \PP^n$ be a set of distinct points and fix an integer $d\ge 1$.
Given a point $P\in \PP^n\setminus Z$, one can ask whether there exists a degree $d$ cone in $\PP^n$ with vertex $P$ that contains $Z$. For a general choice of $P$ the answer is often negative, but the set of \emph{exceptional} points
for which such cones exist (or exist in unexpectedly large families) can carry interesting geometry.
This section makes this precise and introduces the $d$-Weddle locus.

Let $R$ be the homogeneous coordinate ring of $\PP^n$, i.e., the polynomial ring
$\field[x_0,\ldots,x_n]$ over the ground field $\field$ with its standard grading by degree.
For a point $P\in \PP^n$ we write $I(P)\subset R$ for its homogeneous ideal, and similarly $I(Z)$ for the ideal of $Z$.

For $P\in \PP^n\setminus Z$ set
\[
I(Z,P,d) = I(Z)\cap I(P)^d \subset R.
\]
Thus a form $F\in [R]_t$ lies in $[I(Z,P,d)]_t$ if and only if $F$ vanishes on $Z$ and vanishes to order at least $d$ at $P$.

For $t\ge 0$ we set
\[
\delta(Z,P,d,t)\ =\ \dim_{\field}[I(Z,P,d)]_t.
\]
For fixed $Z,d,t$, the function $P\mapsto \delta(Z,P,d,t)$ is upper semicontinuous,
hence attains its minimum on a nonempty Zariski open subset of $\PP^n$.
We denote this generic value by
\[
\delta(Z,d,t)\ =\ \min_{P\in \PP^n\setminus Z}\delta(Z,P,d,t).
\]

The quantity $\delta(Z,P,d,d)$ has a direct geometric meaning: it counts the dimension of the linear system of degree $d$ hypersurfaces
having multiplicity $\ge d$ at $P$ and passing through $Z$, i.e., degree $d$ cones with vertex $P$ containing $Z$.

\begin{definition}
The {\it $d$-Weddle locus} of $Z$ is the Zariski closure  of the set of points $P \in \PP^n\setminus Z$
for which $\delta(Z,P,d,d)>\delta(Z,d,d)$.
\end{definition}

In other words,  the $d$-Weddle locus it is the locus of points $P$ for which there exist \emph{more} degree $d$ cones through $Z$ with vertex $P$ than for a general vertex.

\subsection*{Projection viewpoint}
The same definition can be phrased in terms of projections, which will be convenient throughout the paper.
Fix a general hyperplane $H\simeq \PP^{n-1}$ and, for $P\in \PP^n\setminus Z$, let
\[
\pi_P\colon \PP^n\dashrightarrow H
\]
be the linear projection from $P$. Let $Z_P:=\pi_P(Z)\subset H$ be the projected set.

A degree $d$ cone in $\PP^n$ with vertex $P$ containing $Z$ corresponds bijectively to a degree $d$ hypersurface in $H$ containing $Z_P$:
indeed, a hypersurface $D\subset H$ lifts to the cone over $D$ with vertex $P$, and conversely every cone with vertex $P$
is determined by its intersection with a general hyperplane $H$ not passing through $P$.
It follows that
\begin{equation}\label{eq:projection-equality}
\dim_{\field}[I(Z_P)]_d \ =\ \delta(Z,P,d,d),
\end{equation}
where $I(Z_P)\subset \field[H]$ is the homogeneous ideal of $Z_P$.

Thus the $d$-Weddle locus can equivalently be described as the closure of the set of points $P\notin Z$ such that
\[
\dim_{\field}[I(Z_P)]_d\ >\ \delta(Z,d,d).
\]
In this sense, the $d$-Weddle locus records those centers of projection for which the image $Z_P\subset H$
satisfies an \emph{unexpected} degree-$d$ relation.

In this sense, the $d$-Weddle locus measures the extent of this “unexpected” behavior in degree $d$.

This behaviour is closely analogous to the so-called non-Lefschetz  locus studied in \cite{BMMN}; we explain the precise relationship in Section~\ref{sec:LefschetzConnections}.

\subsection*{Linear general position and the classical case} Recall that a set of points $Z\subset\PP^n$ is in \textit{linear general position} (LGP) if every subset of $k\leq n+1$
points of $Z$ spans a $(k-1)$-linear subspace of $\PP^n$. 
\begin{example}[The classical Weddle surface] Let $Z\subset\PP^3$ be a set of 6 points  in LGP  and take $d=2$. 
For a general point $P\in \PP^3$ the projection $Z_P\subset \PP^2$ is a general set of $6$ points, hence
\[
\dim[I(Z_P)]_2=0\qquad\text{(equivalently, there is no conic through $Z_P$)}.
\]
However, there is a nonempty locus of points $P$ for which $Z_P$ \emph{does} lie on a conic; by \eqref{eq:projection-equality},
this is exactly the set of points $P$ for which $\delta(Z,P,2,2)=\dim[I(Z_P)]_2$ jumps from its generic value $0$ to~$1$.
The $2$-Weddle locus of $Z$ is the closure of this locus, that is the classical \emph{Weddle surface}, i.e., the quartic surface parametrizing vertices of quadric cones through $Z$. This surface is singular at the points of $Z$, see \cite[Introduction]{Edge}. It is depicted in Figure \ref{fig:weddle} prepared with the SURFER software \cite{surfer}, showing five of the six nodes (the six LGP points which define the surface are the only singularities and they are nodes; see Remark \ref{Remark4.9}).
In Remark~\ref{ClassicalWeddleResult} we give a new proof that this surface has degree~$4$.
\begin{figure}
    \centering
    \includegraphics[width=0.7\linewidth]{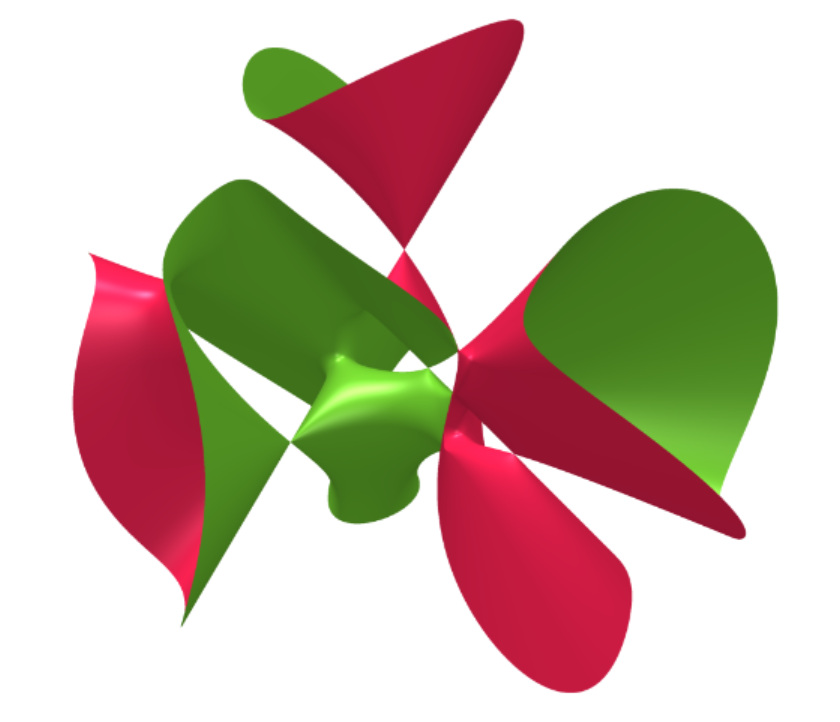}
    \caption{A Weddle surface of six points in $\mathbb P^3$ (5 of 6 nodes visible)} 
    \label{fig:weddle}
\end{figure}
\end{example}

In the next section we endow the $d$-Weddle locus with a natural scheme structure.

\section{The \texorpdfstring{$d$}{d}-Weddle scheme and two approaches to finding it}\label{sec:2approaches}

In this section we present two different approaches to compute the Weddle loci, and we show that they induce the same natural scheme structure, which we call the Weddle scheme.

One approach uses an interpolation matrix, where the relevant object is its kernel. This approach was used by Cayley \cite{Cayley} in the first published proof about the Weddle surface.  
The other uses a matrix arising from Macaulay duality, where the focus is in its cokernel.

The interpolation matrix is conceptually simpler but typically larger (hence less computationally efficient), whereas the Macaulay-duality matrix is often smaller and sometimes easier to use in practice, though less straightforward to write down explicitly.
Having both viewpoints is useful (see, e.g., 
Example \ref{BigWeddleExample} and Remark \ref{compare matrices}).

The following notation will be helpful.
Given a monomial $M=x_0^{i_0}\cdots x_n^{i_n}$ of degree $\deg(M)=i_0+\cdots+i_n$, let
\begin{equation}\label{notation1}
\begin{array}{lcl}
e_M &=&\displaystyle i_0!\cdots i_n!\\[4pt]
c_M &=&\displaystyle\frac{(i_0+\cdots+i_n)!}{i_0!\cdots i_n!}=\frac{\deg(M)!}{e_M}.
\end{array}
\end{equation}

\subsection{Interpolation matrices and $d$-Weddle schemes} \label{sec:Interpolation matrices}
Let $Z=\{P_1,\ldots,P_r\}\subset\PP^n$ be a finite set of distinct points.
Let $s_1,\ldots,s_r$ be positive integers.
Then $I=I(P_1)^{s_1}\cap\cdots\cap I(P_r)^{s_r}$ is the graded ideal of all forms that vanish to order at least $s_i$ at each point $P_i$.
The ideal $I$ is saturated  and defines a subscheme
$X\subset\PP^n$ known as a {\it fat point subscheme},
denoted $X=s_1P_1+\cdots+s_rP_r$, so we can write $I=I(X)$.
Since our ground field has characteristic~0, $[I]_t$ is given for each degree $t$ by 
the kernel of a matrix $\Lambda(X,t)$, known as 
the {\it interpolation matrix} for $X$ in degree $t$. We now explain this in more detail.

For a single point $P_1$, the vector space $[I(P_1)]_t$ is the span of all forms
$F\in [R]_t$ which vanish at $P_1$; i.e., such that $F(P_1)=0$.
If we enumerate the monomials of degree $t$ as $M_1,\ldots,M_N$, 
so $N=\binom{n+t}{n}$, then $F=\sum a_jM_j$ for some coefficients $a_j\in \field$.
Thus $F\in[I(P_1)]_t$ if and only if the coefficient vector
$(a_1,\ldots,a_N)$ has inner product 0 with 
the vector $(M_1(P_1),\ldots,M_N(P_1))$
of monomials of degree $t$ evaluated at $P_1$.
(We make sense of the value $M_j(P_1)$ by fixing a representative 
$(P_{01},\ldots,P_{n1})$ of the point $P_1=[P_{01}:\ldots:P_{n1}]$.)
Thus the interpolation matrix $\Lambda(P_1,t)=(M_1(P_1),\ldots,M_N(P_1))$
has a single row and $N$ columns. (Since this vector cannot be zero,
the kernel in this case has dimension $N-1$.)

Now consider $X=2P_1$. Now a form $F$ is in $I(X)$ if and only if it 
is singular at $P_1$.
This means $F\in I(X)$ if and only if $(\nabla(F))(P_1)=0$;
i.e., the gradient of $F$ evaluated at $P_1$ is zero.
The gradient $\nabla(F)$ of $F$ is 
$$\nabla(F)=\left(\frac{\partial F}{\partial x_0},\ldots,\frac{\partial F}{\partial x_n}\right).$$
Thus $F=\sum a_jM_j$ is in $[I(X)]_t$ if and only if 
$(\nabla(F))(P_1)=\sum a_j(\nabla(M_j))(P_1)=0$. So with respect to 
the partials $\frac{\partial }{\partial x_i}$ and the monomials
$M_j$, the matrix
$\Lambda(2P_1,t)$ is the $(n+1)\times N$-matrix 
whose entries are 
$$\Lambda(2P_1,t)_{ij}=\frac{\partial M_j}{\partial x_i}(P_1).$$

More generally, given a monomial $m=x_0^{i_0}\cdots x_n^{i_n}$,
let $\partial_m$ denote the differential operator 
$$\partial_m=\frac{\partial^{i_0}}{\partial x_0^{i_0}}
\cdots\frac{\partial^{i_n}}{\partial x_n^{i_n}}.$$
(We can extend this to arbitrary forms by linearity:
given an enumeration $m_i$ of the monomials of degree $k$ 
and a form $F=\sum a_im_i$, we denote by $\partial_F$
the operator $\sum a_i\partial_{m_i}$.)

We now define $\nabla_k$ as
the vector whose components are the $\partial_{m_i}$.
Thus $\nabla_1=\nabla$ and we define $\nabla_0=1$.
Generalizing the case of multiplicity 1 and 2, for $X=(k+1)P_1$
we have that $F\in [I(X)]_t$ if and only if $(\nabla_k(F))(P_1)=0$, so 
$\Lambda((k+1)P_1,t)$ is the $\binom{n+k}{n}\times N$-matrix 
whose entries are 
$$\Lambda((k+1)P_1,t)_{ij}=\frac{\partial M_j}{\partial m_i}(P_1)=\partial_{m_i}M_j(P_1).
$$

Thus the interpolation matrix $\Lambda(X,t)$ for $X=s_1P_1+\cdots+s_rP_r$
is a block matrix of size $(\sum_i\binom{n+s_i-1}{n})\times N$
whose blocks are $\Lambda(s_iP_i,t)$, arranged vertically; i.e.,
$$\Lambda(X,t)=
\left(
\begin{array}{c}
\Lambda(s_1P_1,t)\\
\vdots\\
\Lambda(s_rP_r,t)\\
\end{array}
\right).$$

For a fixed $Z=P_1+\cdots+P_r$ and an additional fat point $dP$, 
$\Lambda(Z+dP,t)$ is an $\left(r+\binom{n+d-1}{n}\right)\times N$ matrix. 
For $t\geq d$, its entries are either
scalars, or monomials of degree $t-d+1$ in the coordinates of $P$.
If we regard the coordinate variables as being the coordinates of $Q$,
so $Q=[x_0:\ldots:x_n]$, then the entries of $\Lambda(Z+dQ,t)$
are monomials in the variables $x_i$, 
and we can specialize the matrix $\Lambda(Z+dQ,t)$ to $\Lambda(Z+dP,t)$
for a particular point $P=[p_0:\ldots:p_n]$ by plugging each $p_i$ 
into the variable $x_i$.
For fixed $Z$, $d$ and $t$, 
the rank of $\Lambda(Z+dP,t)$ achieves its maximum $\rho(Z,d,t)$ 
when $P$ is general. In particular, we have 
$$\rho(Z,d,t)=N-\delta(Z,d,t).$$

In order to introduce the $d$-Weddle scheme, we work with the interpolation matrix for $t=d$:
$$\Lambda(Z+dQ,d)=
\left(
\begin{array}{c}
\Lambda(P_1,d)\\
\vdots\\
\Lambda(P_r,d)\\
\Lambda(dQ,d)\\
\end{array}
\right)=
\left(
\begin{array}{c}
\Lambda(Z,d)\\
\Lambda(dQ,d)\\
\end{array}
\right),$$
where $\Lambda(Z,d)$ is the submatrix consisting of
the first $r$ rows of $\Lambda(Z+dQ,d)$ and thus
corresponds to the points $P_i$ of $Z=P_1+\cdots+P_r$, 
while $\Lambda(dQ,d)$ consists of the remaining $\binom{n+d-1}{n}$ rows and 
corresponds to $dQ$.

Fix an ordering $m_1,\ldots,m_{\binom{n+d-1}{n}}$ of the monomials of degree $d-1$ and an ordering $M_1,\ldots,M_{\binom{n+d}{n}}$ of the monomials of degree $d$. Then the rows of $\Lambda(dQ,d)$ are indexed by the $m_i$ and  the columns of $\Lambda(Z+dQ,d)$ are indexed by the  $M_j$.

For the rows corresponding to points $P_i\in Z$, the $(i,j)$-entry is simply the evaluation $M_j(P_i)$.

The entry for the row for $m_i$ and the column for $M_j$ is  
\[
\partial_{m_i}(M_j)(Q).
\]
This entry is $0$ unless $m_i$ divides $M_j$; if $m_i\mid M_j$, then $M_j=m_i\,x_{k}$ for a unique variable $x_k$, and in that case
\[
\partial_{m_i}(M_j)(Q)=e_{M_j}\,x_k,
\]
where, as in Notation \eqref{notation1},  $e_{M_j}=i_0!\cdots i_n!$ for $M_j=x_0^{i_0}\cdots x_n^{i_n}$. 
(See Example~\ref{BigWeddleExample} for an explicit construction of $\Lambda(Z+dQ,d)$).

The $d$-Weddle locus is the closure of the set of points $P\notin Z$ at which the matrix
$\Lambda(Z+dP,d)$ fails to have the generic rank $\rho(Z,d,d)$.
Therefore it is cut out by the ideal
\[
I_{\rho(Z,d,d)}\bigl(\Lambda(Z+dQ,d)\bigr),
\]
generated by the $\rho(Z,d,d)\times \rho(Z,d,d)$ minors of $\Lambda(Z+dQ,d)$.
We call this ideal the {\it $d$-Weddle ideal} for $Z$.

\begin{definition}
Let $Z\subset\PP^n$ be a finite set of points.
The {\it $d$-Weddle scheme} for $Z$ is the scheme defined by
the saturation of the $d$-Weddle ideal.
\end{definition}

\begin{remark}\label{r. ReducedIntMat}
Computing ideals of minors is more efficient for smaller matrices, so it can be useful to reduce the size of the interpolation matrix.
It is easy to check that elementary row and column operations do not change ideals of minors. Row operations applied to $\Lambda(Z,d)$
can be used to bring $\Lambda(Z,d)$ 
first into block form as follows
$$\Lambda'({Z,d})=
\left(
\begin{array}{c|c}
id_\alpha & *\\
\hline
0 & 0\\
\end{array}
\right)
$$
where $id_\alpha$ is the $\alpha\times\alpha$ identity 
matrix with
$\alpha$ being the rank of $\Lambda(Z,d)$
(so the number of zero rows above is $r-\alpha$),
and where the 0's represent 0 matrices of appropriate sizes
and $*$ represents a submatrix whose entries 
are not of interest (since they will eventually be zeroed out
with column operations). Row operations applied to
$$\left(
\begin{array}{c}
\Lambda'({Z,d})\\
\Lambda(dQ,d)\\
\end{array}
\right)$$
reduce it to the form
$$\left(
\begin{array}{c|c}
id_\alpha & *\\
\hline
0 & 0\\
\hline
0 & \Lambda'({Z+dQ,d})\\
\end{array}
\right)
$$
and now column operations give
$$\left(
\begin{array}{c|c}
id_\alpha & 0\\
\hline
0 & 0\\
\hline
0 & \Lambda'({Z+dQ,d})\\
\end{array}
\right),
$$
where $\Lambda'({Z+dQ,d})$ is a 
$\binom{n+d-1}{n}\times (\binom{n+d}{n}-\alpha)$
matrix of linear forms (as shown in Example \ref{BigWeddleExample}).
Since the ideal of $q\times q$ minors is the same for this last matrix and for $\Lambda(Z+dQ,d)$ for every $q$,
we see $I_q(\Lambda(Z+dQ,d))$ is the trivial ideal $(1)$ for $q\leq\alpha$.
If for $q=\alpha+1$ we have $I_q(\Lambda(Z+dQ,d))=(0)$,
then $\Lambda'({Z+dQ,d})$ is a zero matrix and the $d$-Weddle
locus is empty. Otherwise, for $q>\alpha$ we have
$$I_q(\Lambda(Z+dQ,d))=
I_{q-\alpha}(\Lambda'({Z+dQ,d}))+
I_{q-\alpha+1}(\Lambda'({Z+dQ,d}))+\cdots.$$
Thus for the largest $q$ such that $I_{q-\alpha}(\Lambda'({Z+dQ,d}))$
is nonzero we have
$I_q(\Lambda(Z+dQ,d))=I_{q-\alpha}(\Lambda'({Z+dQ,d}))$
so we see that this $q$ is $q=\rho(Z,d,d)$, hence
$$I_{\rho(Z,d,d)}(\Lambda(Z+dQ,d))=
I_{\rho(Z,d,d)-\alpha}(\Lambda'({Z+dQ,d}))$$
both define the $d$-Weddle scheme. 
\end{remark}

\subsection{Macaulay duality and $d$-Weddle schemes}\label{sec:Macaulay duality}
The $d$-Weddle scheme can also be obtained using 
Macaulay duality 
(also called inverse systems; see \cite{EI}),
as we now explain.

Consider the polynomial rings $R=\field[x_0,\ldots,x_n]$ and
$R^*=\field[\partial_{x_0},\ldots,\partial_{x_n}]$, where formally we think of
the differential operators $\partial_{x_i}$ as independent indeterminates. 
Thus given a point $P=[p_0:\ldots:p_n]\in \PP^n$,
we get the dual form $L_P=p_0x_0+\cdots+p_nx_n\in [R]_1$ 
and the element $\partial_{L_P}=\sum p_i\partial_{x_i}\in [R^*]_1$.

Macaulay duality comes from regarding $R^*$ as acting on $R$.
Given a point $P=[p_0:\ldots:p_n]\in \PP^n$ and $0\leq k\leq t$, the annihilator
of $[I(P)^k]_t$ under this action is $[(\partial_{L_P}^{t-k+1})]_t$, hence we have 
$$[I(P)^k]_t\cong [R^*/(\partial_{L_P}^{t-k+1})]_t,$$
where $\cong$ denotes $\field$-vector space isomorphism.
More generally, given points $P_i\in\PP^n$ and a point $P\in\PP^n$, and
integers $0\leq k_i\leq t$ and $0\leq d\leq t$, we have
$$[I(P_1)^{k_1}\cap\cdots\cap I(P_r)^{k_r}]_t\cong [R^*/(\partial_{L_{P_1}}^{t-k_1+1},\ldots,\partial_{L_{P_r}}^{t-k_r+1})]_t$$
and
$$[I(P_1)^{k_1}\cap\cdots\cap I(P_r)^{k_r}\cap I(P)^{d}]_t\cong [R^*/(\partial_{L_{P_1}}^{t-k_1+1},\ldots,\partial_{L_{P_r}}^{t-k_r+1},\partial_{L_P}^{t-d+1})]_t.$$

We have the following exact sequence induced by the multiplication map $\times \partial_{L_P}^{t-d+1}$
{\footnotesize
\begin{equation}\label{OrigMacDualSeq}
\left[\frac{R^*}{(\partial_{L_{P_1}}^t,\ldots,\partial_{L_{P_r}}^t)}\right]_{d-1} \xrightarrow{\times\partial_{L_P}^{t-d+1}} 
\left[\frac{R^*}{(\partial_{L_{P_1}}^t,\ldots,\partial_{L_{P_r}}^t)}\right]_t
\to 
\left[\frac{R^*}{(\partial_{L_{P_1}}^t,\ldots,\partial_{L_{P_r}}^t,\partial_{L_P}^{t-d+1})}\right]_t
\to 0
\end{equation}
}
(where $\times\partial_{L_P}^{t-d+1}$ denotes the map given by multiplication by $\partial_{L_P}^{t-d+1}$). Since $0 < d \leq t$, we have
$$[R]_{d-1}\cong [R^*]_{d-1}=[R^*/(\partial_{L_{P_1}}^t,\ldots,\partial_{L_{P_r}}^t)]_{d-1},$$
$$[R^*/(\partial_{L_{P_1}}^t,\ldots,\partial_{L_{P_r}}^t)]_t\cong [I(P_1)\cap\cdots\cap I(P_r)]_t$$ 
and
$$[R^*/(\partial_{L_{P_1}}^t,\ldots,\partial_{L_{P_r}}^t,\partial_{L_P}^{t-d+1})]_t\cong [I(P_1)\cap\cdots\cap I(P_r)\cap I(P)^d]_t.$$
In particular, as a vector space, $[I(P_1)\cap\cdots\cap I(P_r)\cap I(P)^d]_t$ is isomorphic to the cokernel of the map $\times\partial_{L_P}^{t-d+1}$.

For the $d$-Weddle locus we want $t=d$, in which case the sequence \eqref{OrigMacDualSeq} is
{\footnotesize
\begin{equation}\tag{\ref*{OrigMacDualSeq}$^\prime$}\label{OrigMacDualSeqPrime}
\left[\frac{R^*}{(\partial_{L_{P_1}}^d,\ldots,\partial_{L_{P_r}}^d)}\right]_{d-1} \xrightarrow{\times\partial_{L_P}} 
\left[\frac{R^*}{(\partial_{L_{P_1}}^d,\ldots,\partial_{L_{P_r}}^d)}\right]_d
\to 
\left[\frac{R^*}{(\partial_{L_{P_1}}^d,\ldots,\partial_{L_{P_r}}^d,\partial_{L_P})}\right]_d
\to 0.
\end{equation}
}
\normalsize
In preparation for comparing what this sequence gives to what the interpolation matrix gives, 
it is helpful to rewrite it as the exact sequence 
\begin{equation}\label{NewMacDualSeq}
([R^*]_0)^r \oplus[R^*]_{d-1}\xrightarrow{D \oplus (\times\partial_{L_P})} [R^*]_d\to   
[R^*/(\partial_{L_{P_1}}^d,\ldots,\partial_{L_{P_r}}^d,\partial_{L_P})]_d\to 0
\end{equation}
where
$$([R^*]_0)^r \xrightarrow{D} [R^*]_d$$
is the map $v=(a_1,\ldots,a_r)\in ([R^*]_0)^r\mapsto D(v)=a_1\partial_{L_{P_1}}^d+\cdots+a_r\partial_{L_{P_r}}^d$,
and
$$[R^*]_{d-1} \xrightarrow{\times\partial_{L_P}} [R^*]_d$$
is the multiplication map 
$w\in [R^*]_{d-1}\mapsto (\times\partial_{L_P})(w)=w\partial_{L_P}$,
hence
$$(D \oplus (\times\partial_{L_P}))(v\oplus w)=D(v)+(\times\partial_{L_P})(w).$$

So now $[I(P_1)\cap\cdots\cap I(P_r)\cap I(P)^d]_d$ is isomorphic to the vector space cokernel of the map $D \oplus (\times\partial_{L_P})$.
If we regard $[R^*]_{d-1}$ as being the sum $\oplus_m [R^*]_0$ over all monomials $m$ of degree $d-1$ and
$[R^*]_d$ as being the sum $\oplus_M [R^*]_0$ over all monomials $M$ of degree $d$, then 
$$([R^*]_0)^r\oplus [R^*]_{d-1} \xrightarrow{D \oplus (\times\partial_{L_P})} [R^*]_d$$
can (in terms of the bases of monomials $m$ and $M$) be written as a matrix map $T=T(Z+dP,d)$
\begin{equation}\label{MatrixT}
([R^*]_0)^r\bigoplus \oplus_m [R^*]_0 \xrightarrow{T=[T_1 | T_2]} \oplus_M [R^*]_0,
\end{equation}
where $T_1$ is the matrix for $D$ giving the map
$([R^*]_0)^r\xrightarrow{T_1} \oplus_M [R^*]_0$ and
$T_2$ is the matrix for $\times\partial_{L_P}$ giving the map
$[R^*]_{d-1}\xrightarrow{T_2} \oplus_M [R^*]_0$.

The entries of $T_1$ are scalars given in terms of the coordinates of the points
$P_i=[p_{0i}:\ldots:p_{ni}]$.
The entry of $T_1$ corresponding to the point $P_i$ and the monomial $M$ is
$(T_1)_{M,i}=c_MM(P_i)$, where
$c_M$ comes (via the Binomial Theorem) from the expansion $\partial_{L_{P_i}}^d=(p_{0i}\partial_{x_0}+\cdots+p_{ni}\partial_{x_n})^d=\sum_Mc_MM(P_i)\partial_M$.
Recall that $c_M$ is the binomial coefficient corresponding  to the monomial $M$
(hence for $M=x_0^{i_0}\cdots x_n^{i_n}$, we have $c_M=\frac{(i_0+\cdots+i_n)!}{i_0!\cdots i_n!}=d!/e_M$).
Finally, $M(P_i)$ is the value of $M$ at $P_i$ (so plug $p_{ji}$ into each variable $x_j$
appearing in $M$).

The entry of $T_2$ corresponding to the monomials $m$ and $M$ is
$(T_2)_{M,m}=0$ unless $mx_i=M$, and then $(T_2)_{M,m}=p_i$, 
since $\times\partial_{L_P}\colon \partial_m\mapsto \partial_{mL_P}=\sum_i p_i\partial_{mx_i}$.

Note that we have 
\begin{align*}
\binom{d+n}{n}-\rank\Lambda(Z+dP,d)=&\dim \ker \Lambda(Z+dP,d)\\
=\dim\coker T(Z+dP,d) =&\binom{d+n}{n}-\rank T(Z+dP,d)
\end{align*}
since both the kernel and cokernel are isomorphic to
$[I(P_1)\cap\cdots\cap I(P_r)\cap I(P)^d]_d$,
and hence the interpolation matrix $\Lambda(Z+dP,d)$ has the same rank as
the Macaulay duality matrix $T(Z+dP,d)$.

The rank of $T(Z+dP,d)$ achieves its maximum $\tau(Z,d)$ 
when $P$ is general; indeed $\tau(Z,d)=\rho(Z,d,d)$.
Thus the $d$-Weddle locus is the closure of the locus of points $P\not\in Z$
such that $\rank(T(Z+dP,d))<\tau(Z,d)$.
This locus is defined by the ideal $I_{\tau(Z,d)}(T(Z+dQ,d))$
of $\tau(Z,d)\times\tau(Z,d)$ minors of $T(Z+dQ,d)$
for $Q=[x_0:\ldots:x_n]$.
Note that the largest $s$ such that $I_s(T(Z+dQ,d))\neq(0)$
is $s=\tau(Z,d)$, and the $d$-Weddle locus for $Z$ is
the zero locus of $I_s(T(Z+dQ,d))$ for this $s$.

\subsection{Comparing the two approaches} What is true, but not obvious, is that $I_s\bigl(T(Z+dQ,d)\bigr)$ coincides with the $d$-Weddle ideal defined earlier; that is, it is not a priori clear that
$$
I_{\tau(Z,d)}\bigl(T(Z+dQ,d)\bigr)
=
I_{\rho(Z,d)}\bigl(\Lambda(Z+dQ,d)\bigr).
$$
We will see this below, and also that the matrix $T'(Z+dQ,d)$ corresponding to the multiplication map $\times\partial_{L_Q}$ in the sequence~\eqref{OrigMacDualSeqPrime} defines the same ideal as before.

Since $T'(Z+dQ,d)$ gives a map to the quotient space
$[R^*/(\partial_{L_{P_1}}^d,\ldots,\partial_{L_{P_r}}^d)]_d$,
we need to make some choices to write $T'(Z+dQ,d)$ down explicitly.
After reordering the points $P_i$ if need be,
we may assume $\partial_{L_{P_1}}^d,\ldots,\partial_{L_{P_\alpha}}^d$
give a basis for the vector space 
$\langle\partial_{L_{P_1}}^d,\ldots,\partial_{L_{P_r}}^d\rangle$
spanned by $\partial_{L_{P_1}}^d,\ldots,\partial_{L_{P_r}}^d$,
where $\alpha=\dim \langle\partial_{L_{P_1}}^d,\ldots,\partial_{L_{P_r}}^d\rangle$.
Likewise we may assume that the elements $\partial_{M_j}$ for the degree $d$ monomials $M_1,\ldots,M_\beta$, together with
$\partial_{L_{P_1}}^d,\ldots,\partial_{L_{P_\alpha}}^d$, give a basis for
$[R^*]_d$, where $\beta=\binom{d+n}{n}-\alpha$. 

We now want to write down $T'(Z+dQ,d)$ in terms of the monomial basis $\partial_{m_i}$
for $[R^*]_{d-1}$ and, for  $[R^*/(\partial_{L_{P_1}}^d,\ldots,\partial_{L_{P_r}}^d)]_d$,
the basis $\partial_{M_1},\ldots,\partial_{M_\beta}$ modulo  
$\langle\partial_{L_{P_1}}^d,\ldots,\partial_{L_{P_r}}^d\rangle$.
To do so we first write down $T(Z+dQ,d)$ in terms of the basis 
$\partial_{m_i}$ for $[R^*]_{d-1}$ and the basis 
$$\partial_{L_{P_1}}^d,\ldots,\partial_{L_{P_\alpha}}^d,\partial_{M_1},\ldots,\partial_{M_\beta}$$ 
for $[R^*]_d$. This is just $S^{-1}T(Z+dQ,d)$, where $S$ is the matrix
whose columns are 
$$\partial_{L_{P_1}}^d,\ldots,\partial_{L_{P_\alpha}}^d,\partial_{M_1},\ldots,\partial_{M_\beta}$$
written in terms of the original monomial basis $\partial_{M_i}$.

Thus $T(Z+dQ,d)$ in terms of the basis
$\partial_{L_{P_1}}^d,\ldots,\partial_{L_{P_\alpha}}^d,\partial_{M_1},\ldots,\partial_{M_\beta}$
is the block matrix
\begin{equation}\label{BlockMatrix}
S^{-1}T=\left(
\begin{array}{ccccc}
id_\alpha & | & * & | & **\\
\hline
0 &| & 0 & | & T'(Z+dQ,d)\\
\end{array}
\right),
\end{equation}
where $id_\alpha$
is the $\alpha\times\alpha$ identity matrix, $T'(Z+dQ,d)$ is the 
$\beta\times(\binom{d+n-1}{n})$ matrix representing the map $\times\partial_{L_Q}$
in sequence \eqref{OrigMacDualSeqPrime}, $*$ is an $\alpha\times(r-\alpha)$ matrix and 
$**$ is an $\alpha\times(\binom{n+d-1}{n})$ matrix, but the entries
of $*$ and $**$ will not really concern us. 

Since $T$ and $S^{-1}T$ represent the same map, just with respect to different bases,
we have $I_q(T)=I_q(S^{-1}T)$ for the ideals of $q\times q$ minors for all $q$.
By column reduction (i.e., row reduction applied to the transpose), we obtain 
from $S^{-1}T$ the matrix
$$S^{-1}TU=\left(
\begin{array}{ccccc}
id_\alpha & | & 0 & | & 0\\
\hline
0 &| & 0 & | & T'(Z+dQ,d)\\
\end{array}
\right)
,$$
so (suppressing $Z$ and $dQ$) we also have $I_q(T)=I_q(S^{-1}T)=I_q(S^{-1}TU)$ for every $q$.

For $q\leq \alpha$, these are the unit ideal $(1)$.
For $q>\alpha$, we have $I_q(S^{-1}TU) = I_{q-\alpha}(T')+\cdots+ I_q(T')$.
If for $q=\alpha+1$ we have $I_q(T)=I_q(S^{-1}TU)=(0)$, then 
$T'$ is the zero matrix, hence $\partial_{mL_P}$ is
in $\langle \partial_{L_{P_1}}^d,\ldots,\partial_{L_{P_r}}^d\rangle$
for all points $P$. This happens if and only if
$\langle \partial_{L_{P_1}}^d,\ldots,\partial_{L_{P_r}}^d\rangle=[R^*]_d$,
which in turn means $[I(P_1)\cap\cdots \cap I(P_r)]_d=(0)$ and hence the $d$-Weddle locus 
is empty. Suppose $T'$ is not the 0 matrix. Then for some $q>\alpha$ we have
$I_{q-\alpha}(T')\neq0$ but $I_{q-\alpha+1}(T')=(0)$, hence
$I_{q+1}(T)=I_{q+1}(S^{-1}TU)=I_{q-\alpha+1}(T')=(0)$,
so the zero locus defined by each of the ideals 
$I_q(T)=I_q(T')=I_q(S^{-1}TU)=I_{q-\alpha}(T')\neq0$ is the 
$d$-Weddle locus.

In fact we now will show that $I_q(T(Z+dQ,d))=I_q(\Lambda(Z+dQ,d))$
for all $q$, so the matrices $\Lambda(Z+dQ,d)$ and $T(Z+dQ,d)$ define the same $d$-Weddle ideal and so can be used to
obtain the $d$-Weddle scheme.

In preparation for the proof that
$I_q(T(Z+dQ,d))=I_q(\Lambda(Z+dQ,d))$, 
we make some observations.
First, note that $T(Z+dQ,d)$ and the transpose $N=(\Lambda(Z+dQ,d))^t$
of $\Lambda(Z+dQ,d)$ 
have the same size; both are $\binom{n+d}{n}\times(r+\binom{n+d-1}{n})$
matrices. For both, the entries of the first $r$ columns are 
scalars and the entries of the remaining $\binom{n+d-1}{n}$ 
columns are scalar multiples of the variables $x_i$.

We can compare the entries of the two matrices.
Recall $T_1$ is the submatrix of $T=T(Z+dQ,d)$ consisting of the first $r$
columns of $T$, and $T_2$ is the submatrix 
consisting of the remaining columns of $T$.
Likewise, let $N_1$ be the submatrix of $N$ 
consisting of the first $r$ columns of $N$, and 
let $N_2$ be the submatrix consisting of the remaining columns of $N$.

The entry $(N_1)_{ij}$ for row $i$ and column $j$ of $N_1$ is
the value $M_i(P_j)$ of the monomial $M_i$ at $P_j$.
The entry $(T_1)_{ij}$ is $c_{M_i}M_i(P_j)=d!M_i(P_j)/e_{M_i}$.
The entry $(N_2)_{ij}$ for row $i$ and column $j$ of $N_2$ is
$\partial_{m_j}M_i(Q)$. This is 0 if $m_j\not| M_i$ and it is
$e_{M_i}x_{k_{ij}}$ if $m_j|M_i$ where $x_{k_{ij}}=M_i/m_j$.
The entry $(T_2)_{ij}$ is 0 if $m_j\not| M_i$ and it is
$x_{k_{ij}}$ if $m_j|M_i$ where $x_{k_{ij}}=M_i/m_j$.
Thus 
$$(T_1)_{ij}=c_{M_i}(N_1)_{ij}=d!(N_1)_{ij}/e_{M_i}$$ 
and 
$$(N_2)_{ij}=e_{M_i}(T_2)_{ij}.$$

Since $N$ and $T$ are matrices of the same size,
we can speak of corresponding minors.
A minor of $T$ is the determinant of some
square submatrix $A$ of $T$.
The corresponding minor for $N$ is the determinant
of the matrix $B$ whose entries occupy the same locations in
$N$ as do those of $A$ in $T$.

\begin{proposition}\label{MacDuality=Interp}
Given a finite set of points $Z\subset\PP^n$ and a degree $d$,
let $A$ be a minor of $T=T(Z+dQ,d)$, 
coming from a given choice of $s$ rows and $s$ columns of $T$.
Assume that the rows correspond to $\partial_{M_{i_j}}$ for monomials $M_{i_1},\ldots,M_{i_s}$, and that
$j$ of the chosen columns come from $T_1$.
Let $B$ be the corresponding minor of $N=(\Lambda(Z+dQ,d))^t$. 
Then 
$$B=\frac{e_{M_{i_1}}\cdots e_{M_{i_s}}}{(d!)^j}A$$
and thus $I_s(T(Z+dQ,d))=I_s(\Lambda(Z+dQ,d))$.
\end{proposition}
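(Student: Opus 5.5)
The plan is to factor the relation between $N$ and $T$ as multiplication by diagonal matrices, and then use multilinearity of the determinant. Let $E$ be the $\binom{n+d}{n}\times\binom{n+d}{n}$ diagonal matrix with entries $e_{M_1},\ldots,e_{M_{\binom{n+d}{n}}}$, indexed by the rows. Let $F$ be the $(r+\binom{n+d-1}{n})$-square diagonal matrix whose first $r$ diagonal entries are $1/d!$ and whose remaining entries are $1$. The entrywise formulas recorded just before the statement are
\[
(N_1)_{ij}=\frac{e_{M_i}}{d!}(T_1)_{ij}, \qquad (N_2)_{ij}=e_{M_i}(T_2)_{ij}.
\]
The first follows from $(T_1)_{ij}=d!(N_1)_{ij}/e_{M_i}$. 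Together they say exactly that
\[
N = E\,T\,F.
\]

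Now take a choice of $s$ rows, indexed by $M_{i_1},\ldots,M_{i_s}$, and $s$ columns, $j$ of which lie in $T_1$. The corresponding square submatrix of $N$ is obtained from the submatrix of $T$ by two operations: multiply row $k$ by $e_{M_{i_k}}$, and multiply each of the $j$ chosen $T_1$-columns by $1/d!$. This holds because the diagonal factors restrict to the chosen rows and columns. Since the determinant is multilinear in rows and in columns, we get
\[
B=\frac{e_{M_{i_1}}\cdots e_{M_{i_s}}}{(d!)^j}A .
\]

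For the ideal statement, observe that the scalar factor is a nonzero element of $\field$. We are in characteristic $0$, and $d!$ and the $e_M$ are products of factorials, so they are nonzero. Hence every $s\times s$ minor of $N$ is a unit multiple of the corresponding minor of $T$, and conversely. The minors of $N$ are exactly the minors of $\Lambda(Z+dQ,d)$, since transposition does not change minors. Therefore the generating sets of $I_s(T)$ and $I_s(\Lambda)$ agree up to units, and the two ideals are equal.

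The only real care needed is in the bookkeeping. One must check that the row and column orderings of $T$ and $N$ agree: rows indexed by the $M_i$, first $r$ columns by the points, remaining columns by the $m_j$. One must also check the entry formulas, including the zero entries, which match trivially. Beyond this there is no genuine obstacle, since the argument is a direct computation.
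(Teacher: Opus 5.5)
Your proof is correct and is essentially the paper's argument. The paper expands the minor via the Leibniz formula and observes that every product term is rescaled by the same factor $e_{M_{i_1}}\cdots e_{M_{i_s}}/(d!)^j$, which your factorization $N=ETF$ with diagonal $E,F$ expresses in matrix form; you also make explicit the nonvanishing of this scalar in characteristic $0$, which the paper leaves implicit when concluding equality of the ideals of minors.
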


\begin{proof}
The minor $A$ is a signed sum of products of $s$ entries of $T$,
where no two of the $s$ entries come from the same row or column.
Let $\pi$ be one of these products.
Let $j$ be the number of entries in $\pi$ which come from $T_1$; let's say those entries 
were chosen from rows corresponding to $M_{i_1},\ldots, M_{i_j}$.
The other entries come from $T_2$ in rows corresponding to 
$M_{i_{j+1}},\ldots, M_{i_s}$.

The corresponding product for $N$ is 
$$\frac{e_{M_{i_{j+1}}}\cdots e_{M_{i_s}}}{c_{M_{i_1}}\cdots c_{M_{i_j}}}\pi=
\frac{e_{M_{i_{j+1}}}\cdots e_{M_{i_s}}}{\frac{(d!)^j}{e_{M_{i_1}}\cdots e_{M_{i_j}}}}\pi=
\frac{e_{M_{i_1}}\cdots e_{M_{i_s}}}{(d!)^j}\pi$$
using the notation introduced in display \eqref{notation1}.
Thus all terms in the minor $A$ are multiplied by the same factor
$\frac{e_{M_{i_1}}\cdots e_{M_{i_s}}}{(d!)^j}$ to get the terms for $B$.
\end{proof}

\begin{example}\label{BigWeddleExample}
Consider the case of $n=3$ and degree $d=3$
with $Z$ consisting of the points 
$$\begin{array}{ccccc}
P_1=[1:0:0:0], && P_2=[0:1:0:0], && P_3=[0:0:1:0],\\
P_4=[0:0:0:1], && P_5=[1:1:1:1], && P_6=[2:3:5:7].
\end{array}$$
We will give the matrices $N=(\Lambda(Z+3Q,3))^t$ and $T=T(Z+3Q,3)$ with respect to the following
bases of $[R]_d$ and $[R]_{d-1}$.
For $[R]_3$ we have
$$
\begin{array}{lllllllll}
M_1=x_0^3, && 
M_2=x_0^2x_1, &&
M_3=x_0^2x_2, &&
M_4=x_0^2x_3, &&
M_5=x_0x_1^2\\
M_6=x_0x_1x_2, &&
M_7=x_0x_1x_3, &&
M_8=x_0x_2^2, &&
M_9=x_0x_2x_3, &&
M_{10}=x_0x_3^2, \\
M_{11}=x_1^3, &&
M_{12}=x_1^2x_2, &&
M_{13}=x_1^2x_3, &&
M_{14}=x_1x_2^2, &&
M_{15}=x_1x_2x_3, \\
M_{16}=x_1x_3^2, &&
M_{17}=x_2^3, &&
M_{18}=x_2^2x_3, && 
M_{19}=x_2x_3^2, &&
M_{20}=x_3^3,
\end{array}
$$
and
for $[R]_2$ we have 
$$
\begin{array}{lllllllll}
m_1=x_0^2, && 
m_2=x_0x_1, &&
m_3=x_0x_2, &&
m_4=x_0x_3, &&
m_5=x_1^2, \\
m_6=x_1x_2, &&
m_7=x_1x_3, &&
m_8=x_2^2, &&
m_9=x_2x_3, &&
m_{10}=x_3^2.
\end{array}
$$

For $N$, row $i$ corresponds to the basis element $M_i$,
column $j$, for $1\leq j\leq 6$, corresponds to the point $P_j$,
column $j$, for $6< j\leq 16$, corresponds to the basis element
$m_{j-6}$.
Entry $N_{i,j}$, for $1\leq j\leq 6$, is $M_i(P_j)$, while for
$6< j\leq 16$, it is
$\partial_{m_{j-6}}M_i$.
So for example
$N_{2,6}=M_2(P_6)=x_0^2x_1([2:3:5:7])=2^23=12$, and
$N_{11,11}=\partial_{m_5}M_{11}=\partial_{x_1^2}(x_1^3)=6x_1$.
Here is the entire matrix $N$ (with $N_{2,6}$ and $N_{11,11}$ shown boxed):
{\tiny$$
\left(
\begin{array}{cccccccccccccccc}
1 & 0 & 0 & 0 & 1 & 8 & 6x_0 & 0 & 0 & 0 & 0 & 0 & 0 & 0 & 0 & 0\\
0 & 0 & 0 & 0 & 1 & \fbox{12} & 2x_1 & 2x_0 & 0 & 0 & 0 & 0 & 0 & 0 & 0 & 0\\
0 & 0 & 0 & 0 & 1 & 20 & 2x_2 & 0 & 2x_0 & 0 & 0 & 0 & 0 & 0 & 0 & 0\\
0 & 0 & 0 & 0 & 1 & 28 & 2x_3 & 0 & 0 & 2x_0 & 0 & 0 & 0 & 0 & 0 & 0\\
0 & 0 & 0 & 0 & 1 & 18 & 0 & 2x_1 & 0 & 0 & 2x_0 & 0 & 0 & 0 & 0 & 0\\
0 & 0 & 0 & 0 & 1 & 30 & 0 & x_2 & x_1 & 0 & 0 & x_0 & 0 & 0 & 0 & 0\\
0 & 0 & 0 & 0 & 1 & 42 & 0 & x_3 & 0 & x_1 & 0 & 0 & x_0 & 0 & 0 & 0\\
0 & 0 & 0 & 0 & 1 & 50 & 0 & 0 & 2x_2 & 0 & 0 & 0 & 0 & 2x_0 & 0 & 0\\
0 & 0 & 0 & 0 & 1 & 70 & 0 & 0 & x_3 & x_2 & 0 & 0 & 0 & 0 & x_0 & 0\\
0 & 0 & 0 & 0 & 1 & 98 & 0 & 0 & 0 & 2x_3 & 0 & 0 & 0 & 0 & 0 & 2x_0\\
0 & 1 & 0 & 0 & 1 & 27 & 0 & 0 & 0 & 0 & \fbox{$6x_1$} & 0 & 0 & 0 & 0 & 0\\
0 & 0 & 0 & 0 & 1 & 45 & 0 & 0 & 0 & 0 & 2x_2 & 2x_1 & 0 & 0 & 0 & 0\\
0 & 0 & 0 & 0 & 1 & 63 & 0 & 0 & 0 & 0 & 2x_3 & 0 & 2x_1 & 0 & 0 & 0\\
0 & 0 & 0 & 0 & 1 & 75 & 0 & 0 & 0 & 0 & 0 & 2x_2 & 0 & 2x_1 & 0 & 0\\
0 & 0 & 0 & 0 & 1 & 105 & 0 & 0 & 0 & 0 & 0 & x_3 & x_2 & 0 & x_1 & 0\\
0 & 0 & 0 & 0 & 1 & 147 & 0 & 0 & 0 & 0 & 0 & 0 & 2x_3 & 0 & 0 & 2x_1\\
0 & 0 & 1 & 0 & 1 & 125 & 0 & 0 & 0 & 0 & 0 & 0 & 0 & 6x_2 & 0 & 0\\
0 & 0 & 0 & 0 & 1 & 175 & 0 & 0 & 0 & 0 & 0 & 0 & 0 & 2x_3 & 2x_2 & 0\\
0 & 0 & 0 & 0 & 1 & 245 & 0 & 0 & 0 & 0 & 0 & 0 & 0 & 0 & 2x_3 & 2x_2\\
0 & 0 & 0 & 1 & 1 & 343 & 0 & 0 & 0 & 0 & 0 & 0 & 0 & 0 & 0 & 6x_3\\
\end{array}
\right).$$}
And here for reference is the transpose of $4\Lambda'({Z+3Q,3})$:
{\tiny$$
\left(
\begin{array}{cccccccccc}
8x_1-16x_2+8x_3 &  8x_0 &  -16x_0 &  8x_0 &  0 &  0 &  0 &  0 &  0 &  0\\
-2x_1-6x_2 &  -2x_0+8x_1 &  -6x_0 &  0 &  8x_0 &  0 &  0 &  0 &  0 &  0\\
10x_1-18x_2 &  10x_0+4x_2 &  -18x_0+4x_1 &  0 &  0 &  4x_0 &  0 &  0 &  0 &  0\\
22x_1-30x_2 &  22x_0+4x_3 &  -30x_0 &  4x_1 &  0 &  0 &  4x_0 &  0 &  0 &  0\\
30x_1-38x_2 &  30x_0 &  -38x_0+8x_2 &  0 &  0 &  0 &  0 &  8x_0 &  0 &  0\\
50x_1-58x_2 &  50x_0 &  -58x_0+4x_3 &  4x_2 &  0 &  0 &  0 &  0 &  4x_0 &  0\\
78x_1-86x_2 &  78x_0 &  -86x_0 &  8x_3 &  0 &  0 &  0 &  0 &  0 &  8x_0\\
25x_1-33x_2 &  25x_0 &  -33x_0 &  0 &  8x_2 &  8x_1 &  0 &  0 &  0 &  0\\
43x_1-51x_2 &  43x_0 &  -51x_0 &  0 &  8x_3 &  0 &  8x_1 &  0 &  0 &  0\\
55x_1-63x_2 &  55x_0 &  -63x_0 &  0 &  0 &  8x_2 &  0 &  8x_1 &  0 &  0\\
85x_1-93x_2 &  85x_0 &  -93x_0 &  0 &  0 &  4x_3 &  4x_2 &  0 &  4x_1 &  0\\
127x_1-135x_2 &  127x_0 &  -135x_0 &  0 &  0 &  0 &  8x_3 &  0 &  0 &  8x_1\\
155x_1-163x_2 &  155x_0 &  -163x_0 &  0 &  0 &  0 &  0 &  8x_3 &  8x_2 &  0\\
225x_1-233x_2 &  225x_0 &  -233x_0 &  0 &  0 &  0 &  0 &  0 &  8x_3 &  8x_2\\\end{array}
\right).$$}

For $T$, we have 
$T_{i,j}=c_{M_i}M_i(P_j)$ 
for $1\leq j\leq 6$, and
$T_{i,j}=0$ if $m_{j-6}$ does not divide $M_i$, and it is
$T_{i,j}=M_i/m_{j-6}$ if $m_{j-6} | M_i$.
So for example
$$T_{2,6}=c_{M_2}M_2(P_6)=\frac{3!}{2! 1!}(x_0^2x_1)([2:3:5:7])=36,$$
and $T_{11,11}=M_{11}/m_5=x_1^3/(x_1^2)=x_1$.
Here is the entire matrix (with $T_{2,6}$ and $T_{11,11}$ shown boxed):
{\tiny$$
T=\left(
\begin{array}{cccccccccccccccc}
1 & 0 & 0 & 0 & 1 & 8 & x_0 & 0 & 0 & 0 & 0 & 0 & 0 & 0 & 0 & 0\\ 
0 & 0 & 0 & 0 & 3 & \fbox{36} & x_1 & x_0 & 0 & 0 & 0 & 0 & 0 & 0 & 0 & 0\\ 
0 & 0 & 0 & 0 & 3 & 60 & x_2 & 0 & x_0 & 0 & 0 & 0 & 0 & 0 & 0 & 0\\ 
0 & 0 & 0 & 0 & 3 & 84 & x_3 & 0 & 0 & x_0 & 0 & 0 & 0 & 0 & 0 & 0\\ 
0 & 0 & 0 & 0 & 3 & 54 & 0 & x_1 & 0 & 0 & x_0 & 0 & 0 & 0 & 0 & 0\\ 
0 & 0 & 0 & 0 & 6 & 180 & 0 & x_2 & x_1 & 0 & 0 & x_0 & 0 & 0 & 0 & 0\\ 
0 & 0 & 0 & 0 & 6 & 252 & 0 & x_3 & 0 & x_1 & 0 & 0 & x_0 & 0 & 0 & 0\\ 
0 & 0 & 0 & 0 & 3 & 150 & 0 & 0 & x_2 & 0 & 0 & 0 & 0 & x_0 & 0 & 0\\ 
0 & 0 & 0 & 0 & 6 & 420 & 0 & 0 & x_3 & x_2 & 0 & 0 & 0 & 0 & x_0 & 0\\ 
0 & 0 & 0 & 0 & 3 & 294 & 0 & 0 & 0 & x_3 & 0 & 0 & 0 & 0 & 0 & x_0\\ 
0 & 1 & 0 & 0 & 1 & 27 & 0 & 0 & 0 & 0 & \fbox{$x_1$} & 0 & 0 & 0 & 0 & 0\\ 
0 & 0 & 0 & 0 & 3 & 135 & 0 & 0 & 0 & 0 & x_2 & x_1 & 0 & 0 & 0 & 0\\ 
0 & 0 & 0 & 0 & 3 & 189 & 0 & 0 & 0 & 0 & x_3 & 0 & x_1 & 0 & 0 & 0\\ 
0 & 0 & 0 & 0 & 3 & 225 & 0 & 0 & 0 & 0 & 0 & x_2 & 0 & x_1 & 0 & 0\\ 
0 & 0 & 0 & 0 & 6 & 630 & 0 & 0 & 0 & 0 & 0 & x_3 & x_2 & 0 & x_1 & 0\\ 
0 & 0 & 0 & 0 & 3 & 441 & 0 & 0 & 0 & 0 & 0 & 0 & x_3 & 0 & 0 & x_1\\ 
0 & 0 & 1 & 0 & 1 & 125 & 0 & 0 & 0 & 0 & 0 & 0 & 0 & x_2 & 0 & 0\\ 
0 & 0 & 0 & 0 & 3 & 525 & 0 & 0 & 0 & 0 & 0 & 0 & 0 & x_3 & x_2 & 0\\ 
0 & 0 & 0 & 0 & 3 & 735 & 0 & 0 & 0 & 0 & 0 & 0 & 0 & 0 & x_3 & x_2\\ 
0 & 0 & 0 & 1 & 1 & 343 & 0 & 0 & 0 & 0 & 0 & 0 & 0 & 0 & 0 & x_3\\
\end{array}
\right).$$}
If we take the $16\times16$ submatrix obtained from $T$ by deleting
rows 2, 12, 18, 19, its determinant is some nonzero polynomial $A$ of degree 10; 
this is a minor of $T$. By direct calculation we find that the minor $B$ of $N$ 
obtained by deleting the same rows is 
$$B=(64/9)A=\frac{1}{6^6}\left(\prod_{\substack{i=1,\ldots,20\\ i\neq 2,12,18,19}}e_{M_i} \right)A,$$
exactly as asserted by Proposition \ref{MacDuality=Interp}.

To find $T'$, we need to find the change of basis matrix $S$.
The first $\alpha$ columns of $S$ should be a basis for the column space of the
first $r$ columns of $T$; here $\alpha=r=6$, so we just take the first $r$ columns 
of $T$ for the first $r$ columns of $S$. The remaining $\binom{d+n}{n}-r=14$
columns of $S$ should be unit vectors making $S$ invertible.
It is not hard by inspection to see that the following choice of unit vector
columns gives an invertible matrix:
{\tiny$$
S=
\left(
\begin{array}{cccccccccccccccccccc}
1 & 0 & 0 & 0 & 1 & 8 & 0 & 0 & 0 & 0 & 0 & 0 & 0 & 0 & 0 & 0 & 0 & 0 & 0 & 0\\
0 & 0 & 0 & 0 & 3 & 36 & 0 & 0 & 0 & 0 & 0 & 0 & 0 & 0 & 0 & 0 & 0 & 0 & 0 & 0\\
0 & 0 & 0 & 0 & 3 & 60 & 0 & 0 & 0 & 0 & 0 & 0 & 0 & 0 & 0 & 0 & 0 & 0 & 0 & 0\\
0 & 0 & 0 & 0 & 3 & 84 & 1 & 0 & 0 & 0 & 0 & 0 & 0 & 0 & 0 & 0 & 0 & 0 & 0 & 0\\
0 & 0 & 0 & 0 & 3 & 54 & 0 & 1 & 0 & 0 & 0 & 0 & 0 & 0 & 0 & 0 & 0 & 0 & 0 & 0\\
0 & 0 & 0 & 0 & 6 & 180 & 0 & 0 & 1 & 0 & 0 & 0 & 0 & 0 & 0 & 0 & 0 & 0 & 0 & 0\\
0 & 0 & 0 & 0 & 6 & 252 & 0 & 0 & 0 & 1 & 0 & 0 & 0 & 0 & 0 & 0 & 0 & 0 & 0 & 0\\
0 & 0 & 0 & 0 & 3 & 150 & 0 & 0 & 0 & 0 & 1 & 0 & 0 & 0 & 0 & 0 & 0 & 0 & 0 & 0\\
0 & 0 & 0 & 0 & 6 & 420 & 0 & 0 & 0 & 0 & 0 & 1 & 0 & 0 & 0 & 0 & 0 & 0 & 0 & 0\\
0 & 0 & 0 & 0 & 3 & 294 & 0 & 0 & 0 & 0 & 0 & 0 & 1 & 0 & 0 & 0 & 0 & 0 & 0 & 0\\
0 & 1 & 0 & 0 & 1 & 27 & 0 & 0 & 0 & 0 & 0 & 0 & 0 & 0 & 0 & 0 & 0 & 0 & 0 & 0\\
0 & 0 & 0 & 0 & 3 & 135 & 0 & 0 & 0 & 0 & 0 & 0 & 0 & 1 & 0 & 0 & 0 & 0 & 0 & 0\\
0 & 0 & 0 & 0 & 3 & 189 & 0 & 0 & 0 & 0 & 0 & 0 & 0 & 0 & 1 & 0 & 0 & 0 & 0 & 0\\
0 & 0 & 0 & 0 & 3 & 225 & 0 & 0 & 0 & 0 & 0 & 0 & 0 & 0 & 0 & 1 & 0 & 0 & 0 & 0\\
0 & 0 & 0 & 0 & 6 & 630 & 0 & 0 & 0 & 0 & 0 & 0 & 0 & 0 & 0 & 0 & 1 & 0 & 0 & 0\\
0 & 0 & 0 & 0 & 3 & 441 & 0 & 0 & 0 & 0 & 0 & 0 & 0 & 0 & 0 & 0 & 0 & 1 & 0 & 0\\
0 & 0 & 1 & 0 & 1 & 125 & 0 & 0 & 0 & 0 & 0 & 0 & 0 & 0 & 0 & 0 & 0 & 0 & 0 & 0\\
0 & 0 & 0 & 0 & 3 & 525 & 0 & 0 & 0 & 0 & 0 & 0 & 0 & 0 & 0 & 0 & 0 & 0 & 1 & 0\\
0 & 0 & 0 & 0 & 3 & 735 & 0 & 0 & 0 & 0 & 0 & 0 & 0 & 0 & 0 & 0 & 0 & 0 & 0 & 1\\
0 & 0 & 0 & 1 & 1 & 343 & 0 & 0 & 0 & 0 & 0 & 0 & 0 & 0 & 0 & 0 & 0 & 0 & 0 & 0\\
\end{array}
\right).$$}
Now $S^{-1}T$ is the block matrix shown in equation \eqref{BlockMatrix} with
$T'$ being the block in the lower right corner. What we get for $T'$ is
{\tiny$$T'=
\left(\begin{array}{cccccccccc}
x_1-2x_2+x_3 & x_0 & -2x_0 & x_0 & 0 & 0 & 0 & 0 & 0 & 0\\
-\frac{1}{4}x_1-\frac{3}{4}x_2 & -\frac{1}{4}x_0+x_1 & -\frac{3}{4}x_0 & 0 & x_0 & 0 & 0 & 0 & 0 & 0\\
\frac{5}{2}x_1-\frac{9}{2}x_2 & \frac{5}{2}x_0+x_2 & -\frac{9}{2}x_0+x_1 & 0 & 0 & x_0 & 0 & 0 & 0 & 0\\
\frac{11}{2}x_1-\frac{15}{2}x_2 & \frac{11}{2}x_0+x_3 & -\frac{15}{2}x_0 & x_1 & 0 & 0 & x_0 & 0 & 0 & 0\\
\frac{15}{4}x_1-\frac{19}{4}x_2 & \frac{15}{4}x_0 & -\frac{19}{4}x_0+x_2 & 0 & 0 & 0 & 0 & x_0 & 0 & 0\\
\frac{25}{2}x_1-\frac{29}{2}x_2 & \frac{25}{2}x_0 & -\frac{29}{2}x_0+x_3 & x_2 & 0 & 0 & 0 & 0 & x_0 & 0\\
\frac{39}{4}x_1-\frac{43}{4}x_2 & \frac{39}{4}x_0 & -\frac{43}{4}x_0 & x_3 & 0 & 0 & 0 & 0 & 0 & x_0\\
\frac{25}{8}x_1-\frac{33}{8}x_2 & \frac{25}{8}x_0 & -\frac{33}{8}x_0 & 0 & x_2 & x_1 & 0 & 0 & 0 & 0\\
\frac{43}{8}x_1-\frac{51}{8}x_2 & \frac{43}{8}x_0 & -\frac{51}{8}x_0 & 0 & x_3 & 0 & x_1 & 0 & 0 & 0\\
\frac{55}{8}x_1-\frac{63}{8}x_2 & \frac{55}{8}x_0 & -\frac{63}{8}x_0 & 0 & 0 & x_2 & 0 & x_1 & 0 & 0\\
\frac{85}{4}x_1-\frac{93}{4}x_2 & \frac{85}{4}x_0 & -\frac{93}{4}x_0 & 0 & 0 & x_3 & x_2 & 0 & x_1 & 0\\
\frac{127}{8}x_1-\frac{135}{8}x_2 & \frac{127}{8}x_0 & -\frac{135}{8}x_0 & 0 & 0 & 0 & x_3 & 0 & 0 & x_1\\
\frac{155}{8}x_1-\frac{163}{8}x_2 & \frac{155}{8}x_0 & -\frac{163}{8}x_0 & 0 & 0 & 0 & 0 & x_3 & x_2 & 0\\
\frac{225}{8}x_1-\frac{233}{8}x_2 & \frac{225}{8}x_0 & -\frac{233}{8}x_0 & 0 & 0 & 0 & 0 & 0 & x_3 & x_2\\
\end{array}
\right).$$}
One can check by direct computation that the ideals of maximal minors for the matrices
$N,T,T'$ and $\Lambda'({Z+3Q,3})$ are indeed equal and nonzero (indeed, the rows of $T'$ are nonzero scalar multiples of the rows of
$(\Lambda'({Z+3Q,3}))^t$). 
Thus any of these matrices can be used
to find the 3-Weddle scheme. It is interesting to mention that by direct computation
we find the 3-Weddle ideal is not saturated, and its saturation is not radical,
so the 3-Weddle scheme is not reduced and thus is not equal to the 3-Weddle locus.
In fact, the scheme defined by the 3-Weddle ideal consists of the union of the 15 lines joining pairs of the  points $P_1,\dots,P_6$, together with embedded components at each point $P_i$. More precisely, a primary decomposition for the ideal of the 3-Weddle scheme is given by the intersection of the ideals of the 15 lines with the cubes of the ideals of the six points.
\end{example}

\section{A first look to the \texorpdfstring{$d$}{d}-Weddle scheme of general points in \texorpdfstring{$\PP^n$}{Pn}}\label{sec: general points in Pn}
The results introduced in Section \ref{sec:2approaches} allow us to start a systematic study of the $d$-Weddle scheme for sets of general points in $\mathbb P^n$. We start with an illustrative example. 

\begin{example}[The $2$-Weddle scheme of $10$ general points in $\PP^4$]\label{10ptsInP4}
Let $X$ be a set of 10 general points in $\PP^4$. For a general point $P$, the projection of $X$ is a general set of 10 points in
$\PP^3$, which therefore do not lie on a quadric. Thus it is of interest to investigate the 2-Weddle locus (and scheme) for $X$, that is the locus of points $P$ in $\PP^4$ so that projecting $X$ from $P$ gives a set of 10 points on a quadric surface in $\PP^3$. 

We have
\[
\dim [I(P_1) \cap \dots \cap I(P_{10})]_1 = 0, \ \ \dim [I(P_1) \cap \dots \cap I(P_{10})]_2 = 15 - 10 = 5.
\]
We want to know for which $P$ is it true that
\[
 \dim[I(P_1) \cap \dots \cap I(P_{10}) \cap  I(P)^2]_2 \geq 1.
\]
Since for any distinct points $P_i$ there is a canonical vector space isomorphism
$$[R^*/((\partial_{L_{P_1}})^{t-k_1+1},\ldots,(\partial_{L_{P_r}})^{t-k_r+1})]_t
\cong [R/((L_{P_1})^{t-k_1+1},\ldots,(L_{P_r})^{t-k_r+1})]_t,$$
we can apply Macaulay duality while working in $R$, which is notationally
simpler.
So, by Macaulay duality for any $t$ we have
\[
\dim [I(P_1) \cap \dots \cap I(P_{10})]_t = \dim [R/(L_{P_1}^t, \dots, L_{P_{10}}^t)]_t 
\]
and 
\[ 
\dim [I(P_1) \cap \dots \cap I(P_{10}) \cap I(P)^2]_2 = \dim [R/(L_{P_1}^2, \dots, L_{P_{10}}^2, L_P)]_2.
\]
In the exact sequence
\[
[R/(L_{P_1}^2, \dots, L_{P_{10}}^2)]_1 \stackrel{\times L_P}{\longrightarrow} [R/(L_{P_1}^2, \dots, L_{P_{10}}^2)]_2 \rightarrow [R/(L_{P_1}^2, \dots, L_{P_{10}}^2, L_P)]_2 \rightarrow 0,
\]
the first two terms both have dimension 5. Since for most points $P$ in $\PP^4$ the 
projection from $P$ to $\PP^3$ does not lie on a quadric surface, 
for most linear forms $L_P$ the map 
by $\times L_P$ in the above sequence will be surjective. 
Thus the ideal $I_2(T'(X+2Q,2))$ is not $(0)$,
so the 2-Weddle locus is the degeneracy locus given by $I_2(T'(X+2Q,2))$, and since
$T'(X+2Q,2)$ is a $5\times 5$ matrix, $I_2(T'(X+2Q,2))=(\det(T'(X+2Q,2)))$.
Since $T'(X+2Q,2)$ is a matrix of linear forms, we see the 2-Weddle scheme
is a hypersurface of degree 5. 
\end{example}

More generally, we have the following result.

\begin{theorem}\label{WeddleSchemeSigmaThm}
Fix a hyperplane $\PP^n$ inside $\PP^{n+1}$. Let $Z$ be a general set of $\binom{d+n}{n}$ points in $\PP^{n+1}$. For a point $P \in \PP^{n+1}$ 
let $\pi_P$ be the projection to $\PP^n$. 
Let $\Sigma$ be the $d$-Weddle scheme of points in $\PP^{n+1}$ for which $\pi_P(Z)$ lies on a hypersurface in $\PP^n$ of degree $d$. Then $\Sigma$ is defined by a form
of degree $\binom{d+n}{n+1}$.
\end{theorem}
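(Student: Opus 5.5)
We follow the template of Example~\ref{10ptsInP4}, working in $R=\field[x_0,\ldots,x_{n+1}]$, the coordinate ring of $\PP^{n+1}$, and using Macaulay duality in its $R$-form. Set $r=\binom{d+n}{n}$ and let $L_1,\ldots,L_r$ be the linear forms dual to the points of $Z$. Consider the multiplication map of \eqref{OrigMacDualSeqPrime},
\[
\times L_Q\colon [R/(L_1^d,\ldots,L_r^d)]_{d-1}\longrightarrow [R/(L_1^d,\ldots,L_r^d)]_{d},
\]
represented by the matrix $T'(Z+dQ,d)$ of linear forms in the coordinates of $Q$.

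The first step is to compute the two dimensions. Since $Z$ is general, the powers $L_i^d$ are linearly independent in $[R]_d$, because general points impose independent conditions on forms of degree $d$ in $\PP^{n+1}$ as long as $r\le\binom{d+n+1}{n+1}$. Hence the target has dimension $\binom{d+n+1}{n+1}-\binom{d+n}{n}=\binom{d+n}{n+1}$. The source is $[R]_{d-1}$, of dimension $\binom{d-1+n+1}{n+1}=\binom{d+n}{n+1}$. So $T'$ is a square matrix of size $\binom{d+n}{n+1}$ with linear entries.

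The second step is to show that $T'$ has full rank for general $Q$, i.e.\ $\delta(Z,d,d)=0$. By \eqref{eq:projection-equality}, this amounts to exhibiting a single point $P$ for which $\pi_P(Z)$ lies on no hypersurface of degree $d$ in $\PP^n$. Since $Z$ is general and $P$ is general, $\pi_P(Z)$ is a general set of $\binom{d+n}{n}$ points of $\PP^n$. Such a set imposes independent conditions on the $\binom{d+n}{n}$-dimensional space $[\field[\PP^n]]_d$, so no degree $d$ form vanishes on it. This is the only genuinely non-formal point. To make it airtight, I would argue that the map $(Z,P)\mapsto\pi_P(Z)$ is dominant onto $(\PP^n)^r$: one can lift any $r$ points of $\PP^n$ to $\PP^{n+1}$ and use any $P$ off the hyperplane. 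Then apply semicontinuity.

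With both steps in hand, $\tau(Z,d)$ equals the full size of $T'$. By Proposition~\ref{MacDuality=Interp} and the reduction in \eqref{BlockMatrix}, the $d$-Weddle ideal equals the ideal of maximal minors of $T'$, namely the principal ideal $(\det T'(Z+dQ,d))$. This determinant is a nonzero form of degree $\binom{d+n}{n+1}$ because the entries are linear. A principal ideal is saturated, so $\Sigma$ is the hypersurface defined by this form.

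Finally, one should check that this form defines the locus of the statement. At a point $P\notin Z$, $\det T'(P)=0$ exactly when the cokernel $[I(Z)\cap I(P)^d]_d\neq 0$, i.e.\ when $\pi_P(Z)$ lies on a degree $d$ hypersurface, as in Example~\ref{10ptsInP4}. I expect the only potential subtlety to be in the second step, that is, confirming the generic value $\delta(Z,d,d)=0$; the rest is bookkeeping of binomial coefficients.
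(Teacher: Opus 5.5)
Your proposal is correct and follows the paper's proof. Both use the Macaulay-duality sequence and the dimension count showing that $\times L_Q$ is a square $\binom{d+n}{n+1}\times\binom{d+n}{n+1}$ matrix of linear forms, and both get nonvanishing of its determinant from the fact that a general projection of $Z$ lies on no degree-$d$ hypersurface. Your dominance/semicontinuity argument and the remark that a principal ideal is saturated just make explicit what the paper dismisses with ``as in the previous example.'' One small slip: the generic rank $\tau(Z,d)$ of $T$ is $r+\binom{d+n}{n+1}$. It is $T'$ whose generic rank equals its full size, so the Weddle ideal is $I_{\tau}(T)=I_{\tau-r}(T')=(\det T')$.
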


\begin{proof}
We keep the same set-up and notation as before. 
Let $r=\binom{d+n}{n}$, and let $Z$ consist of the points
$P_1,\ldots,P_r$. We begin with the exact sequence
$$
[R/(L_{P_1}^d, \dots, L_{P_r}^d)]_{d-1} \stackrel{\times L_P}{\longrightarrow} [R/(L_{P_1}^d, \dots, L_{P_r}^d)]_d \rightarrow [R/(L_{P_1}^d,\dots,L_{P_r}^d, L_P)]_d \rightarrow 0.
$$
The first vector space has dimension $\binom{d-1+n+1}{n+1} = \binom{d+n}{n+1}$. The second has dimension
$$
\binom{d+n+1}{n+1} - r = \binom{d+n+1}{n+1} - \binom{d+n}{n} = \binom{d+n}{n+1}.
$$
Thus the multiplication is represented by a square $\displaystyle\binom{d+n}{n+1} \times \binom{d+n}{n+1}$ matrix of linear forms.
Note, as in the previous example, the determinant of this matrix is not 0, 
so the result follows as before. 
\end{proof}

\begin{conjecture}\label{conj:irreducible}
Let $\Sigma$ be as in Theorem \ref{WeddleSchemeSigmaThm}.
Then $\Sigma$ is reduced and irreducible.
\end{conjecture}

Since integrality is a Zariski--open condition,  to prove the conjecture for a given $d$ it would suffice to exhibit a single configuration for which the $d$-Weddle scheme is reduced and irreducible; irreducibility for general $Z$ would then follow.
Alternatively, if one can show the singular locus of $\Sigma$ has codimension in $\Sigma$ more than 1,
then $\Sigma$ is irreducible. This raises the question of what the singular locus of $\Sigma$ is.
This is addressed in Appendix~\ref{AppendixA} for the classical Weddle surface.

\medskip

In general (as in Example \ref{BigWeddleExample}) 
the $d$-Weddle scheme need not be reduced 
(even, unlike Example \ref{BigWeddleExample}, if it is a hypersurface) or equidimensional.
For examples of these phenomena, see Section \ref{non-reduced} and Example \ref{nonred sup on pt}.
Thus it can happen that the $d$-Weddle locus may have smaller degree than the $d$-Weddle scheme.

In some cases, such as 
Example \ref{10ptsInP4},
Theorem \ref{WeddleSchemeSigmaThm} and Remark \ref{ClassicalWeddleResult},
the value of the degree of the $d$-Weddle scheme is 
a consequence of the following fact
taken from \cite{J1}, which studies a disguised version of a similar problem.
Let $A$ be a $q \times p$ matrix of linear forms in $m+1$ variables, and let $Y_r$ be the subscheme of $\PP^m$ defined by the vanishing of the $(r+1) \times (r+1)$ minors of $A$. If $Y_r$ is not empty, then $Y_r$ has codimension at most $(p-r)(q-r)$ (cf. \cite[page 189]{EN}, \cite[Theorem 2.1]{BV}  or \cite[page 83 and Proposition~4.1]{ACGH}). 
For a generic matrix this is achieved (see \cite{HE} for references) and in this case $Y_r$ is ACM \cite{HE}. We will refer to $(p-r)(q-r)$ as the  {\it expected codimension}. Note that if this maximum codimension is achieved, the degree of $Y_r$ (as a scheme) is forced:

\begin{lemma}\label{J1 lemma}
Assume $Y_r  \neq \emptyset$ has the expected codimension $(p-r)(q-r)$. 
Then by \cite[Lemma 1.4]{J1}
\[
\deg Y_r = \prod_{i=0}^{p-r-1} \left [ \binom{q+i}{r}  /  \binom{r+i}{r} \right ],
\]
and by \cite{HE} $Y_r$ is ACM.
\end{lemma}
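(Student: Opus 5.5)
The lemma is a citation of a classical degree formula for determinantal loci (Harris–Tu / Giambelli–Thom–Porteous), so the plan is to reduce to that formula rather than recompute anything. The key steps, in order, are: (1) identify $Y_r$ as the degeneracy locus of a map of vector bundles; (2) use the expected-codimension hypothesis to get a Cohen–Macaulay structure and a locus of the correct class; (3) evaluate the Porteous determinant numerically.

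First I view $A$ as a morphism of vector bundles on $\PP^m$,
\[
\phi\colon \mO_{\PP^m}^{\,p}\longrightarrow \mO_{\PP^m}(1)^{\,q},
\]
so that $Y_r$ is exactly the locus $D_r(\phi)$ where $\rank\phi\le r$, with its scheme structure given by the $(r+1)\times(r+1)$ minors. By the cited bounds (\cite{EN}, \cite{BV}, \cite{ACGH}) every component of $D_r(\phi)$ has codimension at most $(p-r)(q-r)$. Our hypothesis is that equality holds.

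Second, I use this hypothesis via the Eagon–Northcott/Hochster–Eagon theorem \cite{HE}. The ideal of maximal-type minors of the generic $q\times p$ matrix is perfect of grade $(p-r)(q-r)$. Since our ideal has the same (maximal possible) codimension, perfection specializes; this is the standard specialization argument for perfect ideals. Hence $R/I_{r+1}(A)$ is Cohen–Macaulay, so $Y_r$ is ACM and in particular unmixed. The same specialization gives that the class of $Y_r$ in the Chow ring is computed by the Thom–Porteous formula, with no excess or embedded contributions:
\[
[Y_r]=\det\bigl(c_{q-r+j-i}(F-E)\bigr)_{1\le i,j\le p-r},
\]
where $E=\mO^p$ and $F=\mO(1)^q$. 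Equivalently, the Hilbert polynomial of $R/I_{r+1}(A)$ equals that of the generic determinantal ring, graded with linear entries, and the degree can be read off from the Eagon–Northcott-type resolution. This is \cite[Lemma~1.4]{J1}.

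Third, I evaluate the degree. Here $c(F-E)=(1+h)^q$, so $c_k=\binom{q}{k}h^k$ and
\[
\deg Y_r=\det\left(\binom{q}{q-r+j-i}\right)_{1\le i,j\le p-r}.
\]
This binomial determinant is a classical evaluation, via the hook-content formula or Lindström–Gessel–Viennot, equal to
\[
\prod_{i=0}^{p-r-1}\binom{q+i}{r}\Big/\binom{r+i}{r}.
\]
As a check, in the case $p=q$, $r=p-1$ this gives $\deg=q$, the degree of the determinant.

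The main obstacle is the second step: justifying that the expected codimension alone ensures the scheme-theoretic degree matches the generic computation, with no embedded components. I would handle this entirely through the Hochster–Eagon perfection result, and otherwise defer to \cite{J1} and \cite{HE} as the statement itself does.
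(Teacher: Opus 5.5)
Your argument is correct and is essentially the paper's own, which consists only of citing \cite[Lemma 1.4]{J1} for the degree and \cite{HE} for the ACM property. You additionally unpack the degree citation via the Thom--Porteous class $\det\bigl(\binom{q}{q-r+j-i}\bigr)_{1\le i,j\le p-r}$, and your hook-content evaluation of this determinant (a rectangular Schur polynomial evaluated at $q$ ones) does give $\prod_{i=0}^{p-r-1}\binom{q+i}{r}/\binom{r+i}{r}$. The only imprecisions are verbal. The generically perfect ideal you need is that of the $(r+1)\times(r+1)$ minors, not ``maximal-type'' minors. You should also say explicitly that $R/I_{r+1}(A)$ being Cohen--Macaulay of positive dimension (since $Y_r\neq\emptyset$) makes $I_{r+1}(A)$ saturated, which is what lets you conclude that $Y_r$ is ACM.
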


\begin{remark}
Sometimes one is interested in situations where 
the codimension is not the expected one, so Lemma \ref{J1 lemma}
does not apply. 
We give such an example in subsection \ref{not max minors},
which also has the point of interest that the minors needed are nonmaximal.
\end{remark}

\section{Weddle schemes and loci for sets of six points in \texorpdfstring{$\PP^3$}{P3}} \label{six pt section}

In this section, we describe several interesting phenomena exhibited by Weddle schemes associated with various configurations of six points in $\mathbb P^3$. It is perhaps surprising how rich a range of behavior can arise from such a small case. 

\subsection{The classical Weddle surface}
We start by applying the Macaulay duality 
method in the case that $Z\subset \PP^3$ consists of 
6 points in LGP, in particular recovering the fact, due to Weddle, 
that the $2$-Weddle scheme (and locus) of a set of 6 points in LGP 
is a surface of degree 4. 
\begin{remark}\label{ClassicalWeddleResult}
Six points in LGP in $\PP^3$ necessarily impose independent conditions on 
forms of degree 2, since five points in LGP can, up to change of coordinates, 
be taken to be the four coordinate vertices and the point $[1:1:1:1]$.
It is easy to see that these five points impose independent conditions on quadrics,
but the ideal of these five points is generated by quadrics, 
so any additional sixth point will impose an additional condition. 

Now, looking at the sequence \eqref{OrigMacDualSeqPrime}
with $d=2$, we see that $\times\partial_{L_P}$ is multiplication by a linear form
from one vector space of dimension 4 to another 
vector space of dimension 4, so $\times\partial_{L_P}$ is represented by a $4\times4$ matrix $J$ of linear forms
in the coordinates of $P$ (namely $J=T'(Z+2P,2)$ to be explicit, as given in Equation \eqref{BlockMatrix}). 
Furthermore, not all projections of $Z$ lie on a conic (for example, project from a 
general point $P$ in the plane of 3 of the points of $Z$, hence $P$ is not in the plane of the other 3 points of $Z$). Thus the determinant of $J$ does not vanish identically,
so the $2$-Weddle scheme is defined by $\det(J)$.
Hence the $2$-Weddle scheme has degree 4 and 
all of its components have codimension 1; i.e., it is a quartic surface. 
To see that the $2$-Weddle locus is also a quartic surface, we claim 
that there are lines which intersect the $2$-Weddle locus in at least 4 points.
Thus the degree of the $2$-Weddle locus is at least 4, but its degree cannot be
more than the degree of the $2$-Weddle scheme, so
the $2$-Weddle locus is also a quartic surface.
To see the claim, let $L$ be a general line in the plane defined by 3 of the points of $Z$.
There are 4 points $P\in L$ from which the projection of $Z$ is contained
in a conic, namely the points of intersection of $L$ with the lines through 
pairs of the 3 points, and the point of intersection of $L$ with the plane defined by the
other three points of $Z$. 
\end{remark}

\subsection{Weddle surfaces with more than 25 lines}	\label{more25 lines}
We recall first that a general Weddle surface contains $25$ lines. To this end
let $Z=\{P_1,\ldots,P_6\}\subseteq \PP^3$ be a set of 6 points in LGP. 
By Remark \ref{ClassicalWeddleResult}, the 2-Weddle surface $\mathcal W(Z)$
is a quartic surface. Since the points of $Z$ are in LGP,
there are 15 distinct lines through pairs of points of $Z$.
In addition, LGP ensures that each subset of three of the six points of $Z$ is 
contained in a unique plane. Thus, given two planes $\Pi_1, \Pi_2$ with $Z\subset \Pi_1\cup\Pi_2$, each intersection $Z\cap \Pi_i$ has exactly three points of $Z$, so
$Z\cap \Pi_1$ is disjoint from $Z\cap \Pi_2$. Thus no point of $Z$ is on the line
$\Pi_1\cap\Pi_2$, and no other plane containing this line contains a point of $Z$.
Thus there are $\frac{1}{2}\binom{6}{3}=10$ distinct lines which arise as intersections of two planes whose union
contains $Z$, and these lines are distinct from the 15 lines 
through pairs of points of $Z$. These 25 lines are contained in $\mathcal W(Z)$
since projecting from a general point $O$ of any such line, the image of $Z$ is 
contained in a conic, and so $O$ (and hence the line) is contained in 
$\mathcal W(Z)$. This is classical -- see \cite{EMCH}.

However it is also known that there can be more than 25 lines in $\mathcal W(Z)$ if the points are not general, but still in LGP.
This case occurs for instance when the points in $Z$ are pairs of a skew involution, see \cite[Theorem 1]{moore}. 
For the reader's convenience we include a statement and a proof of such a result, namely
Proposition \ref{p.Weddle >25}, which gives a construction having two lines
in addition to the 25 mentioned above, and which shows precisely
how to pick the points so that they are in LGP.

The following lemma will be useful in the proof of Proposition \ref{p.Weddle >25}.
It is a corollary of Desargues' involution theorem (cf.~\cite[Theorem~9.31]{CoxeterPG}).

\begin{lemma}\label{l. harmonic and conic}
Let $A\in\PP^2$ and let $\ell_1,\ell_2,\ell_3$ be three distinct lines through $A$.
For $i=1,2,3$, choose points $B_i,C_i\in \ell_i\setminus\{A\}$ and let $H_i\in\ell_i$ be the harmonic conjugate of $A$
with respect to $B_i,C_i$, i.e., $(A,H_i;B_i,C_i)$ is harmonic.
Then the following are equivalent:
\begin{enumerate}
\item[(i)] there exists a (possibly reducible) conic $\gamma$ containing $B_1,B_2,B_3,C_1,C_2,C_3$;
\item[(ii)] the three points $H_1,H_2,H_3$ are collinear.
\end{enumerate}
\end{lemma}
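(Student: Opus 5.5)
Via Desargues' involution theorem, I will show that both conditions are equivalent to a single statement about the involution each $\ell_i$ carries.

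\textbf{Step 1: the involution on each line.} On $\ell_i$, let $\sigma_i$ be the unique projective involution fixing $A$ and $H_i$. Since $(A,H_i;B_i,C_i)$ is harmonic, $\sigma_i$ swaps $B_i$ and $C_i$. Conversely, an involution of $\ell_i$ that fixes $A$ and swaps $B_i,C_i$ must be $\sigma_i$: its second fixed point is the harmonic conjugate of $A$ with respect to the pair $B_i,C_i$.

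\textbf{Step 2: (i) $\Rightarrow$ (ii).} Suppose $\gamma$ is a conic through the six points. First assume $A\notin\gamma$. The polar line $p$ of $A$ with respect to $\gamma$ meets each secant $\ell_i$ through $A$ in the harmonic conjugate of $A$ with respect to $\ell_i\cap\gamma=\{B_i,C_i\}$. So $H_i\in p$ for all $i$, and the $H_i$ are collinear.

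If $\gamma$ is degenerate, I will check this directly, assuming $A\notin\gamma$.
\begin{itemize}
\item If $\gamma=m\cup m'$ is a pair of lines, each $\ell_i$ meets $m$ and $m'$ once. The harmonic conjugate of $A$ is then the point where $\ell_i$ meets the fourth harmonic line of the pencil through $m\cap m'$ spanned by $m$, $m'$ and the line joining $m\cap m'$ to $A$.
\item If $\gamma$ is a double line, the six points lie on three collinear pairs, and the same argument applies.
\end{itemize}
In every case the polar of $A$ is still a line containing all $H_i$.

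The case $A\in\gamma$ cannot occur. Each $\ell_i$ would then meet $\gamma$ in three points $A,B_i,C_i$, so $\ell_i\subset\gamma$; three distinct lines cannot all lie in a conic.

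\textbf{Step 3: (ii) $\Rightarrow$ (i).} Suppose $H_1,H_2,H_3$ lie on a line $p$ not through $A$. Consider the pencil of conics through $B_1,C_1,B_2,C_2$, and pick its member $\gamma$ passing through $B_3$.

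By Desargues' involution theorem, the conics of this pencil cut an involution $\tau$ on $\ell_3$. I claim $\tau$ fixes $A$ and $H_3$, so that $\tau=\sigma_3$.
\begin{itemize}
\item The two line pairs $B_1C_1\cup B_2C_2$ and $B_1B_2\cup C_1C_2$ (or another degenerate member) belong to the pencil.
\item All conics of the pencil have the same polar of $A$. For the member $\ell_1\cup\ell_2$ this is evident, and Step 2 shows that polar is the line $H_1H_2=p$.
\item The fixed points of $\tau$ are the points of $\ell_3$ where pencil members are tangent, and these are exactly $A$ and $p\cap\ell_3=H_3$.
\end{itemize}
Since $\gamma$ contains $B_3$ and $\tau=\sigma_3$ swaps $B_3$ with $C_3$, we get $C_3\in\gamma$. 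This proves (i).

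If $A$ lies on the line through $H_1,H_2,H_3$, that line is one of the $\ell_i$, which is impossible for three distinct lines. So the assumption in this step costs nothing.

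\textbf{Main obstacle.} The delicate point is justifying that the pencil's polar of $A$ is constant and coincides with the line $H_1H_2$. A cleaner route avoids this. Choose coordinates with $A=[0:0:1]$. Then each involution $\sigma_i$ that fixes $A$ is determined by its other fixed point $H_i$. Write a conic as $Q(x,y)+2zL(x,y)+cz^2$. Restricted to $\ell_i$, the condition "contains $B_i$ and $C_i$" then imposes linear conditions. Compare the resulting rank of the $6\times6$ interpolation system with the $3\times3$ collinearity determinant of the $H_i$; I expect the two to be proportional. I would carry out this computation as a fallback.
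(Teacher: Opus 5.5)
Your argument is correct and is essentially the paper's. The paper gives no proof beyond citing Desargues' involution theorem (its subsequent remark notes that the line $H_1H_2H_3$ is the polar of $A$), and your Steps~2 and~3 spell out exactly that argument. One slip: the polar of $A$ is undefined for the member $\ell_1\cup\ell_2$, whose vertex is $A$, so the constant-polar claim should be made only for the other members (none passes through $A$), where your Step~2 supplies it, and then the fixed point $T\neq A$ of $\tau$, being the contact point of a member tangent to $\ell_3\ni A$, lies on that polar, so $T=p\cap\ell_3=H_3$; the double-line case in Step~2 never arises because $B_i\neq C_i$.
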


\begin{remark}
    In the situation of Lemma \ref{l. harmonic and conic}, if $\gamma$ is irreducible then the line $\ell=\overline{H_1H_2H_3}$ is the polar of $A$ with respect to $\gamma$,
and if $\gamma$ is reducible then $\ell$ passes through the singular point of $\gamma$.
Moreover, the nine points $H_i,B_i,C_i$ are the complete intersection
$(\ell_1\cup\ell_2\cup\ell_3)\cap(\ell\cup\gamma)$.
\end{remark}

\begin{proposition}\label{p.Weddle >25}
Let $\ell_1,\ell_2, \ell_3$ be three skew lines in $\PP^3$, contained in the smooth quadric $\mathcal Q$.
Let  $m_1, m_2$ be distinct lines intersecting each of $\ell_1,\ell_2, \ell_3$. 
Let $P_{ij}$ be the point of intersection of $\ell_i$ and $m_j$. 
For each $i=1,2,3$, 
pick any distinct points $A_i, B_i$ on $\ell_i$ distinct from the points $P_{ij}$.
Let
$Z=\{A_1,B_1,A_2,B_2,A_3,B_3\}$. Furthermore,
let $r_i$ be the ruling line of $\mathcal Q$ through $A_i$ other than $\ell_i$ and let 
$r'_i$ be the ruling line of $\mathcal Q$ through $B_i$ other than $\ell_i$.
\begin{enumerate}
\item[(1)] The following are equivalent:
\begin{enumerate}
\item[(a)] the points of $Z$ are in LGP;
\item[(b)] $A_2, B_2\not\in r_1\cup r'_1$ and
$A_3, B_3\not\in r_1\cup r_2\cup r'_1\cup r'_2$.

\end{enumerate}

\item[(2)]

Now assume that $B_i$, $i=1,2,3$, is the point on $\ell_i$ such that 
$P_{i1},P_{i2},A_i,B_i$, in this order, are harmonic. If the points of $Z$ are in LGP, then $\mathcal W(Z)$
is a quartic surface which contains at least 27 lines,
including the 25 classical ones and 
the lines $m_1$ and $m_2$.
\end{enumerate}
\end{proposition}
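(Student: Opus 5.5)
For part (1), I will enumerate the ways LGP can fail for six points lying two on each of three skew lines. Three points of $Z$ are never collinear. A line meeting a quadric in three points lies on it. The pairs on a fixed $\ell_i$ cannot take a third point, since the $\ell_i$ are skew. A line through points of two or three different $\ell_i$ lying on $\mathcal Q$ would have to be a ruling line of the other family. It would then meet each $\ell_i$ at most once, so three points of $Z$ are collinear exactly when some $A_3$ or $B_3$ lies on a line $r$ or $r'$ through a point of $\ell_1$ and a point of $\ell_2$. Since the $r$'s from the same family are either equal or disjoint, this says that two of the chosen points lie on a common ruling line of the second family. So collinearity of three points of $Z$ is excluded by (b), and conversely.

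Next come four coplanar points. A plane containing $\ell_i$ together with two more points cannot contain another whole $\ell_j$, so it meets $\ell_j$ and $\ell_k$ in one point each. Such planes (the two points of $\ell_i$ plus one point on each of the other lines) are coplanar exactly when the two points on $\ell_j,\ell_k$ lie on a common ruling line meeting $\ell_i$. Indeed, a plane through $\ell_i$ cuts $\mathcal Q$ in $\ell_i$ plus one ruling line $r$ of the other family, and it meets $\ell_j$ and $\ell_k$ only on $r$. Four points with at most one on each line are impossible, since there are only three lines. So every LGP failure reduces to two points of $Z$ on different $\ell$'s sharing a second-family ruling line. That is precisely the negation of (b), written asymmetrically: $A_2,B_2$ versus $r_1,r_1'$, and $A_3,B_3$ versus $r_1,r_1',r_2,r_2'$. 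This gives (a)$\Leftrightarrow$(b).

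For part (2), the quartic and the 25 lines follow from Remark~\ref{ClassicalWeddleResult} and the discussion above. It remains to show $m_1\subset\mathcal W(Z)$ (symmetrically $m_2$). Since $\mathcal W(Z)$ is closed, it suffices to show that projecting from a general point $O\in m_1$ sends $Z$ into a conic.

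Projection from $O$ to a plane $H$ sends each plane $\langle O,\ell_i\rangle$ to a line $\lambda_i$. These three planes all contain $m_1$, because $m_1$ meets each $\ell_i$ at $P_{i1}$. Hence the $\lambda_i$ are concurrent at the point $A=\pi_O(m_1)$, and $\pi_O(P_{i1})=A$ for each $i$. Since projection preserves cross-ratio on each $\ell_i$, the images satisfy $(A,\pi_O(P_{i2});\pi_O(A_i),\pi_O(B_i))$ harmonic. So $H_i=\pi_O(P_{i2})$ is the harmonic conjugate of $A$ with respect to the projected pair on $\lambda_i$.

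The three points $P_{12},P_{22},P_{32}$ lie on the line $m_2$, so their projections are collinear. Lemma~\ref{l. harmonic and conic} then gives a conic through $\pi_O(Z)$.

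The main obstacle is genericity. I must check that for general $O\in m_1$ the three lines $\lambda_i$ are distinct and the projected points $\pi_O(A_i),\pi_O(B_i)$ are distinct from $A$. This holds because $O\notin m_2$ and $O$ avoids the finitely many bad points, using $m_1\neq m_2$ and LGP.
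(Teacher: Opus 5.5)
\textbf{There is a genuine gap in part (2).} Your part (1) and your proof that $m_1,m_2\subset\mathcal W(Z)$ follow essentially the paper's argument, but you never show that $m_1,m_2$ are \emph{new} lines. After citing the 25 classical lines you write that ``it remains to show $m_1\subset\mathcal W(Z)$.'' The conclusion ``at least 27 lines'' also needs $m_1$ and $m_2$ to be different from those 25. As written, you have only shown that $\mathcal W(Z)$ contains the 25 lines together with $m_1,m_2$.

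The paper closes this as follows.
\begin{enumerate}
\item Since $A_i,B_i\neq P_{ij}$, no point of $Z$ lies on $m_j$. So $m_j$ is not one of the 15 lines joining two points of $Z$.
\item Each $m_j$ meets $\mathcal Q$ in three distinct points $P_{1j},P_{2j},P_{3j}$. Hence $m_j\subset\mathcal Q$, and it lies in the ruling opposite to the $\ell_i$.
\item So every plane through $m_j$ cuts $\mathcal Q$ in $m_j$ plus a line of the $\ell$-ruling. That line is either some $\ell_i$, which carries exactly two points of $Z$, or it is disjoint from all $\ell_i$ and carries none.
\item Thus no plane through $m_j$ contains three points of $Z$. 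Therefore $m_j$ is not one of the 10 lines $\Pi_1\cap\Pi_2$ with $Z\subset\Pi_1\cup\Pi_2$.
\end{enumerate}
Together with $m_1\neq m_2$, this gives 27 distinct lines. You should add this step.

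\textbf{Smaller inaccuracies, none fatal.}
\begin{itemize}
\item The opening claim ``Three points of $Z$ are never collinear'' is false. For example, $A_1,A_2,A_3$ are collinear when $r_1=r_2=r_3$.
\item The ``conversely'' after the collinearity paragraph is also wrong. A single coincidence such as $A_2\in r_1$ violates (b) but produces four coplanar points, not a collinear triple.
\item This does not matter, because three collinear points plus any fourth point are coplanar. So your coplanarity analysis alone already gives (a)$\Leftrightarrow$(b).
\item In the genericity step, neither fact you need comes from LGP or from $O$ being general:
  \begin{itemize}
  \item $\pi_O(A_i)\neq\pi_O(P_{i1})$ holds for every $O\in m_1\setminus\ell_i$, by the hypothesis $A_i\neq P_{i1}$.
  \item The planes $\langle O,\ell_i\rangle$ are pairwise distinct because the $\ell_i$ are pairwise skew.
  \end{itemize}
\end{itemize}
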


\begin{proof}
We represent in  Figure \ref{fig:moore1} the assumption in the statement.
\begin{figure}
    \centering
\begin{tikzpicture}[scale=1.1, line cap=round, line join=round]

  \draw[gray!40] (0,0) rectangle (8,5);

  \def\xone{1.6}
  \def\xtwo{4.0}
  \def\xthree{6.4}

  \draw[very thick, black!70] (\xone,0)   -- (\xone,5) node[pos=1, above] {$\ell_1$};
  \draw[very thick, black!70] (\xtwo,0)   -- (\xtwo,5) node[pos=1, above] {$\ell_2$};
\draw[very thick, black!70] (\xthree,0) -- (\xthree,5) node[pos=1, above] {$\ell_3$};

  \def\yone{1.8}
  \def\ytwo{3.2}

  \draw[very thick, black!70] (0,\yone) -- (8,\yone) node[pos=1, right] {$m_1$};
  \draw[very thick, black!70] (0,\ytwo) -- (8,\ytwo) node[pos=1, right] {$m_2$};

  \foreach \i/\x in {1/\xone, 2/\xtwo, 3/\xthree}{
    \fill[black] (\x,\yone) circle (1.8pt)
      node[below left] {$P_{\i 1}$};
    \fill[black] (\x,\ytwo) circle (1.8pt)
      node[above left] {$P_{\i 2}$};
  }

  \def\yAone{0.9}
  \def\yAtwo{2.5}
  \def\yAthree{4.3}

  \coordinate (A1) at (\xone,\yAone);
  \coordinate (A2) at (\xtwo,\yAtwo);
  \coordinate (A3) at (\xthree,\yAthree);

  \def\yBone{4.6}
  \def\yBtwo{1.1}
  \def\yBthree{2.7}

  \coordinate (B1) at (\xone,\yBone);
  \coordinate (B2) at (\xtwo,\yBtwo);
  \coordinate (B3) at (\xthree,\yBthree);

  \foreach \P/\lab/\pos in {A1/$A_1$/below left,
                           A2/$A_2$/below left,
                           A3/$A_3$/below right,
                           B1/$B_1$/above right,
                           B2/$B_2$/above right,
                           B3/$B_3$/above right}
  {
    \fill[black!70] (\P) circle (1.8pt) node[\pos] {\lab};
  }

  \draw[thick, black!55!black] (0,\yAone)   -- (8,\yAone)   node[pos=0.01, left] {$r_1$};
  \draw[thick, black!55!black] (0,\yAtwo)   -- (8,\yAtwo)   node[pos=0.01, left] {$r_2$};
  \draw[thick, black!55!black] (0,\yAthree) -- (8,\yAthree) node[pos=0.01, left] {$r_3$};

  \draw[thick, black!55!black] (0,\yBone)   -- (8,\yBone)   node[pos=0.99, right] {$r'_1$};
  \draw[thick, black!55!black] (0,\yBtwo)   -- (8,\yBtwo)   node[pos=0.99, right] {$r'_2$};
  \draw[thick, black!55!black] (0,\yBthree) -- (8,\yBthree) node[pos=0.99, right] {$r'_3$};

  \node[black] at (4,-0.55) {$Z=\{A_1,B_1,A_2,B_2,A_3,B_3\}$};

\end{tikzpicture}

    \caption{Three sets of harmonic points in Proposition \ref{p.Weddle >25} on the rulings of a quadric.}
   \label{fig:moore1}
\end{figure}
(1)
First assume (a).
We argue contrapositively.
Let $1\leq i < j\leq 3$ and let $k$ be such that $\{i,j,k\}=\{1,2,3\}$.
If $A_j\in r_i$, then $\ell_k\cup r_i$ defines a plane that contains
$A_k,B_k,A_i,A_j$, contrary to assumption, while
if $A_j\in r'_i$, then $\ell_k\cup r'_i$ defines a plane that contains
$A_k,B_k,A_j,B_i$, contrary to assumption. The same argument holds for the $B_j$. 

Now assume (b).  
Assume, by contradiction, that four of the points are coplanar
(contained in a plane $\Pi$ say).

The four points must be three $A$'s and a $B$, three $B$'s and an $A$, or two of each.
If it is three $A$'s and a $B$, or three $B$'s and an $A$, then 
$\Pi$ contains an $A_i$ and a $B_i$ on the same line $\ell_i$, hence
$\Pi$ contains the line $\ell_i$. This is a line in a ruling on $\mathcal Q$,
so $\Pi\cap \mathcal Q$ is $\ell_i\cup L$ where $L$ is a line in the other ruling.
In the case of three $A$'s and a $B$, $L$ must contain the two $A$ points
not on $\ell_i$. 
These two points are $A_j, A_k$ for some $1\leq j<k\leq 3$, 
so $L=r_j=r_k$ and we have $A_k\in r_j$, contrary to assumption.
The case of three $B$'s and an $A$ is completely analogous. 

Now say $\Pi$ contains two $A$'s and two $B$'s.
Then $\Pi$ must contain $A_i$ and $B_i$ (and hence $\ell_i$) for some $i$,
so $\Pi\cap \mathcal Q=\ell_i\cup L$ for a line $L$ in the other ruling.
Since no plane contains skew lines, the remaining two points
cannot be $A_j$ and $B_j$ for any $j$, so they are
$A_j$ and $B_k$ such that
$\{i,j,k\}=\{1,2,3\}$. In particular, 
$L$ contains $A_j$ and $B_k$, so $r_j=L=r'_k$, and hence $r'_j=r_k$.

If $k<j$, then $A_j\in r'_k$, contrary to assumption.
If $j<k$, then $A_k\in r'_j$, contrary to assumption.

(2) 
We now check that $m_1,m_2\subset\mathcal W(Z)$.
Let $O$ be a general point of $m_1$.
Projecting from $O$, the images of the points $P_{11},P_{21}, P_{31}$ coincide. Since, after the projection, $P_{12},P_{22}, P_{32}$ are on a line and the harmonic property is preserved, the images of the 6 points 
$A_1,B_1,A_2,B_2,A_3,B_3$ lie on a conic by Lemma \ref{l. harmonic and conic}, 
and hence $O$ (and thus $m_1$) is contained in $\mathcal W(Z)$. 
The same argument holds for $m_2$, applied to the quadruple
$P_{i2}, P_{i1}, A_i, B_i$ on each line $\ell_i$.
Just note that the cross-ratio of $P_{i2}, P_{i1}, A_i, B_i$ is the reciprocal
of the cross ratio for $P_{i1},P_{i2},A_i,B_i$, but the latter are harmonic
so have cross ratio $-1$, hence $P_{i2}, P_{i1}, A_i, B_i$ also
have cross ratio $-1$ and so are harmonic.

Finally, we check that neither $m_1$ nor $m_2$ can be among the 25 lines
mentioned earlier. First, no point of $Z$, by construction, is on
$m_1$ or $m_2$, so neither line can be among the lines through pairs of points of $Z$.
And second, both $m_1$ and $m_2$ are ruling lines in the ruling of $\mathcal Q$ which does 
not include the $\ell_i$. So any plane containing $m_i$ meets $\mathcal Q$ in a line in the ruling
which includes the $\ell_i$, and each such line contains either 2 points of $Z$
(if the line is one of the $\ell_i$) or it contains no points of $Z$.
In particular, no plane containing $m_i$ can contain 3 points of $Z$,
so neither line $m_i$ can occur as the intersection of two planes whose union
contains $Z$.
\end{proof}

\begin{remark}\label{rem:veronese-harmonic}
In the special situation where $Z$ lies on the twisted cubic
\[
\nu_3\colon \PP^1\longrightarrow \PP^3,\qquad [s:t]\longmapsto [s^3:s^2t:st^2:t^3],
\]
the “harmonic condition” in Proposition~\ref{p.Weddle >25} can be read directly on $\PP^1$; this is a consequence of Desargues’ involution theorem (cf. \cite[Theorem 9.31]{CoxeterPG}).

Indeed, every projective automorphism $\varphi\in\PGL_2(\CC)$ induces a projective automorphism
$\Sym^3(\varphi)\in \PGL_4(\CC)$ preserving $\nu_3(\PP^1)$ and acting on it via
$\nu_3\circ \varphi=\Sym^3(\varphi)\circ \nu_3$. 

Conversely, every projective automorphism of \(\PP^3\) preserving \(\nu_3(\PP^1)\) arises in this way (uniquely) from an element of \(\PGL_2(\CC)\).

In particular, if $X=\{p_1,\dots,p_6\}\subset \PP^1$ is partitioned into three pairs
$\{p_1,p_2\}\cup\{p_3,p_4\}\cup\{p_5,p_6\}$ and there exists an involution
$\varphi\in \PGL_2(\CC)$ with
\[
\varphi(p_1)=p_2,\qquad \varphi(p_3)=p_4,\qquad \text{and}\qquad  \varphi(p_5)=p_6,
\]
then passing to $\PP^3$ via $\nu_3$, the induced involution $\Sym^3(\varphi)$ preserves the three secant
lines $\ell_i=\langle \nu_3(p_{2i-1}),\nu_3(p_{2i})\rangle$ and acts on each $\ell_i\cong \PP^1$
as the involution swapping the two marked points. This forces the existence of two lines $m_1,m_2$ in the complementary  ruling of the (unique) smooth quadric
$\mathcal Q$ containing $\ell_1,\ell_2,\ell_3$ such that
\[
(\ell_i\cap m_1,\ \ell_i\cap m_2;\ \nu_3(p_{2i-1}),\nu_3(p_{2i}))=-1\qquad (i=1,2,3).
\]
The lines $m_1$ and $m_2$ satisfying the harmonic condition  are uniquely determined (up to swapping $m_1$ and $m_2$) by the partition.

Conversely, the existence of such $m_1,m_2$ forces an involution of $\PP^1$ swapping the three pairs, hence an induced involution of $\PP^3$ preserving $\nu_3(\PP^1)$ and stabilizing $\{m_1,m_2\}$.
\end{remark}

The number of lines in a Weddle surface can also be larger than $27$. Indeed,  Edge in \cite{Edge} provides an example where such number is at least $37$, i.e., there are twelve extra lines contained in the Weddle surface beside the canonical ones. However, Edge does not write down explicit equations for these lines.  In the next example we apply Proposition~\ref{p.Weddle >25} to recover eight of these twelve extra lines predicted by Edge. 

\begin{example}[The Weddle surface with the dihedral symmetry]\label{ex:37lines}
Let $\varepsilon$ be a primitive sixth root of unity, $\varepsilon^6=1$, and let $Z$ be the set of six points on the twisted cubic
\[
[s:t]\longmapsto [s^3:s^2t:st^2:t^3]\subset\PP^3
\]
corresponding to $q_i=[1:\varepsilon^i]$ for $i=0,\ldots,5$, i.e.,
\[
P_i=[1:\varepsilon^i:\varepsilon^{2i}:\varepsilon^{3i}]\in\PP^3.
\]
Since the $P_i$ lie on a twisted cubic, they are in linear general position in $\PP^3$.

In this case the Weddle surface has equation 
$$\mathcal W(Z)\colon 2(xy^3+z^3w)-(3yz-xw)(x^2+w^2)=0,$$ 
and it contains, according to Edge \cite{Edge}, twelve further lines in addition to the $25$ canonical ones; hence $\mathcal W(Z)$ contains at least $25+12=37$ lines. We show how to find eight of them as an application of Proposition \ref{p.Weddle >25}.  

Every projective automorphism of the twisted cubic is induced by an automorphism of $\PP^1$.
Thus the subgroup of $\PGL_4(\CC)$ preserving $Z$ is the image of the stabilizer of the set
$\{[1:\varepsilon^i]\ |\ i=0,\ldots, 5\}\subset \PP^1$ under the $\Sym^3$-representation of $\PGL_2(\CC)$.
This stabilizer is the dihedral group with twelve elements, $D_{12}$. 
Concretely, it is generated by the rotation $[s:t]\mapsto [s:\varepsilon t]$ and the involution
$[s:t]\mapsto [t:s]$, whose images in $\PGL_4(\CC)$ are represented by
\[
\mathcal R=\begin{pmatrix}
1&0&0&0\\
0&\varepsilon&0&0\\
0&0&\varepsilon^2&0\\
0&0&0&\varepsilon^3
\end{pmatrix}
\qquad\text{and}\qquad
\mathcal S=\begin{pmatrix}
0&0&0&1\\
0&0&1&0\\
0&1&0&0\\
1&0&0&0
\end{pmatrix}.
\]
There are $15$ partitions of $Z$ into three unordered pairs.
We group them into $D_{12}$-orbits and for each orbit we pick one representative
\(
M=\{\{i_1,j_1\},\{i_2,j_2\},\{i_3,j_3\}\}.
\)

When the hypotheses of Proposition~\ref{p.Weddle >25} hold for the corresponding three secant lines
\(\ell_k=\langle P_{i_k},P_{j_k}\rangle\),
we obtain two lines \(m_1(M),m_2(M)\subset \mathcal W(Z)\).
Since $D_{12}$ preserves $Z$, it preserves $\mathcal W(Z)$, hence the full orbit
\(\mathrm{Orb}_{D_{12}}(M)\)
produces the lines \(g(m_1(M))\) and \(g(m_2(M))\) for \(g\in D_{12}\).

\begin{itemize}
    \item {\bf Orbit size 1.} Consider the partition
\[
M_0=\{ \{0,3\},\{1,4\},\{2,5\}\},
\]
i.e.,
\[
Z=\{P_0,P_3\}\ \cup\ \{P_1,P_4\}\ \cup\ \{P_2,P_5\}.
\]
On $\PP^1$ this corresponds to the partition of the six points
$q_i=[1:\varepsilon^i]$ into the three pairs $\{q_i,q_{i+3}\}$.
It is preserved by the involution
\[
\varphi_0:\ [s:t]\longmapsto [s:-t],
\]
since $\varphi_0(q_i)=q_{i+3}$ for all $i$.

By Remark~\ref{rem:veronese-harmonic} the harmonic condition in Proposition~\ref{p.Weddle >25} holds for $M_0$,
hence we obtain two lines $m_1(M_0),m_2(M_0)\subset \mathcal W(Z)$.

More precisely, the corresponding secant lines $\ell_i=\langle P_i, P_{i+3}\rangle$ for $i=0,1,2$, contained in the smooth quadric
$\mathcal Q:\ xw-yz=0,$ are
\[
\ell_0:\begin{cases}
     z-x=0\\ w-y=0,
\end{cases}
\qquad
\ell_1:\begin{cases}
    z-\varepsilon^{2}x=0\\ w-\varepsilon^{2}y=0,
\end{cases} 
\qquad
\ell_2:\begin{cases}
    z-\varepsilon^{4}x=0\\ w-\varepsilon^{4}y=0.
\end{cases} 
\]
And \[
m_1(M_0):\ y=w=0,
\qquad
m_2(M_0):\ x=z=0.
\]

\item \textbf{Orbit size 2.}
A representative is
\[
M_1=\{\{0,5\},\{3,4\},\{1,2\}\},
\]
whose orbit consists of
\[
\{\{0,5\},\{3,4\},\{1,2\}\},\qquad
\{\{0,1\},\{2,3\},\{4,5\}\}.
\]

For $M_1$ the involution swapping points in the first two pairs is represented by the matrix
\[
\varphi_1=
\begin{pmatrix}
-1 & -1+2\varepsilon\\
-1+2\varepsilon & 1
\end{pmatrix}\in\PGL_2(\CC).
\]
By construction we have
\[
\varphi_1(q_0)=q_5,\qquad \varphi_1(q_5)=q_0,\qquad
\varphi_1(q_3)=q_4,\qquad \varphi_1(q_4)=q_2,
\]
but, $\varphi_1$ does not preserve the third pair:
\[
\varphi_1(q_1)\neq q_2.
\]
Hence this orbit does not
produce additional lines via Proposition~\ref{p.Weddle >25}.

\item \textbf{Orbit size 3 (I).} A representative is
\[
M_2=\{\{0,1\},\{2,5\},\{3,4\}\},
\]
with orbit
\[
\{\{0,1\},\{2,5\},\{3,4\}\},\quad
\{\{0,3\},\{1,2\},\{4,5\}\},\quad
\{\{0,5\},\{1,4\},\{2,3\}\}.
\] On $\PP^1$ this partition is preserved by the reflection
\[
\varphi_2\colon \ [s:t]\longmapsto [t:\varepsilon s],
\]
since $\varphi_2(q_i)=q_{1-i}$.
By Remark~\ref{rem:veronese-harmonic}, Proposition~\ref{p.Weddle >25} applies and yields
two lines $m_1(M_2),m_2(M_2)\subset\mathcal W(Z)$.
For the chosen representative we get
\[
m_1(M_2):\ 
\begin{cases}
w+i\,x=0\\
\varepsilon\,y+z=0,
\end{cases}
\qquad
m_2(M_2):\ 
\begin{cases}
w-i\,x=0\\
y+\varepsilon\,z=0.
\end{cases}
\]
Since the orbit of $M_2$ has size $3$, applying $\mathcal R$ produces the  lines
\[
\mathcal R(m_1(M_2)):\ 
\begin{cases}
w-i\,x=0\\
\varepsilon^2\,y+z=0,
\end{cases}
\qquad\ \mathcal R(m_2(M_2)):\ 
\begin{cases}
w+i\,x=0\\
\varepsilon^2\,y+z=0,
\end{cases}
\]\[ \mathcal R^2(m_1(M_2)):\ 
\begin{cases}
w+i\,x=0\\
-y+z=0,
\end{cases}
\qquad\ \mathcal R^2(m_2(M_2)):\ 
\begin{cases}
w-i\,x=0\\
-y+z=0.
\end{cases}
\]

\item \textbf{Orbit size 3 (II).}
A representative is
\[
M_3=\{\{0,2\},\{1,4\},\{3,5\}\},
\]
with orbit
\[
\{\{0,2\},\{1,4\},\{3,5\}\},\quad
\{\{0,3\},\{1,5\},\{2,4\}\},\quad
\{\{0,4\},\{1,3\},\{2,5\}\}.
\]
The unique involution that swaps the first two pairs is represented by the matrix  
\[
\varphi_3 \;=\;
\begin{pmatrix}
2 & \varepsilon-1\\
\varepsilon & -2
\end{pmatrix}\in \PGL_2(\CC).
\]
By construction it satisfies
\[
\varphi_3(q_0)=q_2,\quad \varphi_3(q_2)=q_0,\quad
\varphi_3(q_1)=q_4,\quad \varphi_3(q_4)=q_1.
\]
But $\varphi_3(q_3)\neq q_5$.

\item \textbf{Orbit size 6.}
A representative is
\[
M_4=\{\{0,1\},\{2,4\},\{3,5\}\},
\]
with orbit
\[
\begin{aligned}
&\\
&\Bigl\{
\{\{0,1\},\{2,4\},\{3,5\}\},\;
\{\{0,2\},\{1,3\},\{4,5\}\},\;
\{\{0,2\},\{1,5\},\{3,4\}\},\\
&\{\{0,4\},\{1,2\},\{3,5\}\},\;
\{\{0,4\},\{1,5\},\{2,3\}\},\;
\{\{0,5\},\{1,3\},\{2,4\}\}
\Bigr\}.
\end{aligned}
\]
For $M_4$ the involution swapping points in the first two pairs is represented by
\[
\varphi_4=
\begin{pmatrix}
-2 & -1+3(\varepsilon-\varepsilon^5)\\[2pt]
1+3(\varepsilon-\varepsilon^5) & 2
\end{pmatrix}\in\PGL_2(\mathbb C).
\]
However,
$\varphi(q_3)\notin \{q_0,\dots,q_5\},$ so this partition does not contribute additional lines via Proposition~\ref{p.Weddle >25}.
\end{itemize}
\end{example}

There is another set of six points whose stabilizer gives a larger group, and a set of twelve extra lines in its Weddle surface, we describe it in the next example. 

\begin{example}[The Weddle surface and the octahedral symmetry]\label{6points Octahedral}
Let $i$ be a primitive fourth root of unity, $i^2=-1$, and let $Z$ be the set of six points on the twisted cubic
\[
[s:t]\longmapsto [s^3:s^2t:st^2:t^3]\subset\PP^3
\]
corresponding to $[1:0]$, $[0:1]$ and  $[1:1]$, $[1:i]$, $[1:-1]$, $[1:-i]$, i.e.,
\[
\begin{matrix}
    P_0=[1:0:0:0], P_\infty=[0:0:0:1],\\ P_1=[1:1:1:1], P_2=[1:i:-1:-i],P_3=[1:-1:1:-1], P_4=[1:-i:-1:i]  \in\PP^3.
\end{matrix}
\]
One can compute that the Weddle surface of $Z$ has equation
\[
\mathcal W(Z):\ (x^{2}-z^{2})(wy-z^2)+(w^{2}-y^2)(xz-y^{2})=0.
\]

Since the $P_*$ lie on a twisted cubic, they are in linear general position in $\PP^3$. Every projective automorphism of the twisted cubic is induced by an automorphism of $\PP^1$.
Thus the subgroup of $\PGL_4(\CC)$ preserving $Z$ is the image of the stabilizer of
\[
X=\{0,\infty,1,-1,i,-i\}\subset\PP^1
\]
under the $\Sym^3$-representation of $\PGL_2(\CC)$.
This stabilizer is isomorphic to the octahedral group $S_4$ (of order $24$). A convenient generating pair is given by the maps
\[
[s:t]\mapsto [s:i\,t]\quad\text{and}\quad
[s:t]\mapsto [s-t:s+t],
\]
which preserve the set $\{[1:0],[0:1],[1:\pm1],[1:\pm i]\}$.
Under $\Sym^3$, these induce the projective maps of $\PP^3$ represented by the matrices
\[
\mathcal R=
\begin{pmatrix}
1&0&0&0\\
0&i&0&0\\
0&0&-1&0\\
0&0&0&-i
\end{pmatrix},
\qquad
\mathcal S=
\begin{pmatrix}
1&-1&1&-1\\
1&-1&-1&1\\
1&1&-1&-1\\
1&1&1&1
\end{pmatrix}.
\]
In particular, $\langle \mathcal R,\mathcal S\rangle \cong S_4$ and it preserves the set $Z$.

As in Example~\ref{ex:37lines}, there are $15$ partitions of $Z$ into three unordered pairs. 
Up to the $S_4$-action, the $15$ partitions split into three orbit types:
\[
\text{orbit sizes }1,\ 6,\ 8.
\]

\begin{itemize}
\item \textbf{Orbit size 1.}
A representative is the ``antipodal'' partition
\[
M_0=\{\{0,\infty\},\{1,-1\},\{i,-i\}\}.
\]
There is no involution of $\PP^1$ swapping each of these three pairs simultaneously:
indeed, any involution swapping $0\leftrightarrow\infty$ is of the form $$\varphi_0(z)=\begin{pmatrix}
    0 & 1\\
    k & 0\\
\end{pmatrix}\in \PGL_2(\mathbb C)$$
and imposing $\varphi_0([1:1])=[1:-1]$ forces $k=-1$, which fixes $[1:i]$ rather than swapping $[1:i]\leftrightarrow [1:-i]$.
Hence Proposition~\ref{p.Weddle >25} does not apply to $M_0$.

\item \textbf{Orbit size 6.}
A representative is
\[
M_1=\{\{0,\infty\},\{1,i\},\{-1,-i\}\}.
\]
In this case we get an involution, that is represented by
\[
\varphi_1= \begin{pmatrix} 0 & i\\ 1 & 0\end{pmatrix}\in \PGL_2(\mathbb C)
\]
and satisfies 
\[
\varphi_1([0:1])=[1:0],\ \qquad
\varphi([1:1])=[0:i],\ \qquad
\varphi([1:-1])=[1:-i].
\]
Therefore the harmonic hypothesis in Proposition~\ref{p.Weddle >25} holds for this partition,
so the corresponding Weddle surface $\mathcal W(Z)$ contains two additional lines
$m_1(M_1)$ and $m_2(M_1)$ arising from that proposition, precisely: 
\[m_1(M_1):\ 
\begin{cases}
\eta\,y-z=0\\
x+\eta\,w=0,
\end{cases}
\qquad
m_2(M_1):\ 
\begin{cases}
\eta\,y+\,z=0\\
x-\eta\,\,w=0.
\end{cases}\]
where $\eta$ is an 8-th primitive root of unity.

Since the group $S_4$ preserves $Z$ (hence $\mathcal W(Z)$), applying the elements of $S_4$
to $m_1(M_1)$ and $m_2(M_1)$ produces a collection of extra lines; in particular,
this orbit contributes to $2\cdot 6=12$ lines.

\item \textbf{Orbit size 8.}
A representative is, for instance,
\[
M_2=\{\{0,1\},\{\infty,i\},\{-1,-i\}\}.
\]
One checks that there is no involution $\varphi\in\PGL_2(\CC)$ swapping simultaneously
$[1:0]\leftrightarrow [1:1]$ and $[0:1]\leftrightarrow [1:i]$ and $[1:-1]\leftrightarrow [1:-i]$.
Hence Proposition~\ref{p.Weddle >25} does not apply to this orbit.
\end{itemize}
\end{example}

It has been proved by B. Segre that there are at most $64$ lines on a smooth complex quartic surface, and this bound is sharp (it is attained by Schur's quartic). This bound has been only recently extended to nodal quartic surfaces in $\mathbb P^3$ by Veniani \cite[Theorem 1]{Veniani2017}. An optimal bound for $K3$ quartic surfaces with at least one singular point is not
known. The largest \emph{explicit} number currently realized is $39$ lines achieved by a Delsarte surface (see Example~7.2 in \cite{Veniani2017}). 
It is reasonable to expect that the maximal number of lines on Weddle surfaces (like in the case of all quartic surfaces) is obtained on the most symmetrical ones. This prompts us to believe that the number of lines we got in Example \ref{6points Octahedral} is maximal. 
\begin{conjecture}
    Let $Z$ be a set of six points in LGP in $\mathbb P^3$. Then $\mathcal W(Z)$ contains at most 37 lines.
\end{conjecture}

\begin{remark}\label{Remark4.9}
    The previous examples show that the number of lines on the Weddle surface can vary  
    for six LGP points. However, the singular locus of the Weddle surface of six LGP points always consists exactly
    of the six points. Indeed, the following proposition shows that they are ordinary double points. The fact that the surface is otherwise smooth follows by a calculation (see Appendix \ref{AppendixA}).
    \end{remark}

\begin{proposition} \label{prop: ordnodes} Let $Z=\{P_1,\dots,P_6\}$ be
a set of $6$ points in LGP in $\mathbb P^3$.
Then the Weddle surface $\mathcal W(Z)$ has ordinary nodes as
singularities at each point $P_i$.
\end{proposition}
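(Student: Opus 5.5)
We would prove this geometrically, by reading off the tangent cone of $\mathcal W(Z)$ at $P_1$ (the other $P_i$ are symmetric). Choose coordinates with $P_1=[1:0:0:0]$. By Remark~\ref{ClassicalWeddleResult}, $\mathcal W(Z)$ is a quartic $F=0$. Since $P_1\in\mathcal W(Z)$ (the 15 lines through pairs of points of $Z$ lie on it), we expand
\[
F=x_0^3F_1+x_0^2F_2+x_0F_3+F_4,
\]
with $F_k\in\CC[x_1,x_2,x_3]$ of degree $k$. The claim is that $F_1=0$ and that $F_2$ is a nonzero multiple of a rank $3$ quadratic form.

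\textbf{Step 1 (multiplicity at least 2).} By \S\ref{more25 lines}, the five lines $\overline{P_1P_j}$, $j=2,\dots,6$, lie on $\mathcal W(Z)$. Restricting $F$ to a line through $P_1$ with direction $v$ gives a polynomial whose lowest coefficient is $F_1(v)$, so $F_1$ vanishes at the five directions $v_j$ of $P_j$ seen from $P_1$. By LGP, $P_1,P_2,P_3,P_4$ are not coplanar, so $v_2,v_3,v_4$ are linearly independent, and the linear form $F_1$ must vanish identically. (Equivalently: lines through a smooth point lie in its tangent plane.) Hence $P_1$ is singular.

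\textbf{Step 2 (the tangent cone is the quadric cone through $Z$ with vertex $P_1$).} The same restriction argument gives $F_2(v_j)=0$ for $j=2,\dots,6$. The points $[v_j]\in\PP^2$ are the projections of $P_2,\dots,P_6$ from $P_1$. Three of them are collinear only if four points of $Z$ are coplanar, which LGP forbids. So these five points lie on a unique conic $C$, and that conic is smooth, since a reducible conic through five points forces three of them on a line. Therefore $F_2=c\,Q$, where $Q$ is the rank $3$ form of $C$. If $c\neq0$, the tangent cone at $P_1$ is the cone over the smooth conic $C$, which is exactly an ordinary node.

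\textbf{Step 3 (multiplicity exactly 2, i.e.\ $c\neq 0$).} This is the main point. We bound the multiplicity from above using a plane section, following the idea in Remark~\ref{ClassicalWeddleResult}. Let $\Pi=\langle P_1,P_2,P_3\rangle$ and $\Pi'=\langle P_4,P_5,P_6\rangle$. The plane $\Pi$ is not contained in $\mathcal W(Z)$: a general point of $\Pi$ is not a vertex of a quadric cone through $Z$, as shown in Remark~\ref{ClassicalWeddleResult}. Hence $\mathcal W(Z)\cap\Pi$ is a plane quartic. It contains the four distinct lines $\overline{P_1P_2}$, $\overline{P_1P_3}$, $\overline{P_2P_3}$ and $\Pi\cap\Pi'$, so by degree it equals their union, each line with multiplicity one. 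Among these lines, exactly $\overline{P_1P_2}$ and $\overline{P_1P_3}$ pass through $P_1$. Indeed, $\Pi\cap\Pi'$ misses $P_1$, since otherwise $P_1,P_4,P_5,P_6$ would be coplanar. So the section has multiplicity exactly $2$ at $P_1$. Since the multiplicity of a plane section at a point is at least the multiplicity of the surface there, $\mathrm{mult}_{P_1}\mathcal W(Z)\le 2$, and therefore $F_2\neq0$.

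Combining the three steps, $F_2=cQ$ with $c\neq0$ and $Q$ of rank $3$, so $P_1$ is an ordinary double point. The steps most in need of care are the non-containment $\Pi\not\subset\mathcal W(Z)$ and the claim that the four lines appear with multiplicity one in the section; both follow from the degree count together with Remark~\ref{ClassicalWeddleResult}.
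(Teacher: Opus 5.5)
Your proof is correct and follows essentially the same route as the paper: the section by $\Pi=\langle P_1,P_2,P_3\rangle$ splits as $\overline{P_1P_2}\cup\overline{P_1P_3}\cup\overline{P_2P_3}\cup(\Pi\cap\Pi')$, which bounds the multiplicity at $P_1$ by $2$. The five lines $\overline{P_1P_j}$, no three coplanar (equivalently, five points on a unique smooth conic after projecting from $P_1$), then force the quadratic tangent cone to be nondegenerate. Your Step 1 and your remark on $\Pi\not\subset\mathcal W(Z)$ also make explicit the lower bound $\mathrm{mult}_{P_1}\ge 2$ and the non-containment of $\Pi$, both of which the paper leaves implicit.
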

\begin{proof} After renumbering, we can reduce ourselves to the case $i=1$.

First, we claim that the multiplicity of $P_1\in\mathcal W(Z)$ is $2$.
Indeed, let $\Pi$ be the plane spanned by
$P_1,P_2,P_3$. Then $\Pi$ meets $\mathcal W(Z)$ in a quartic curve $Q$
which contains the lines
$\langle P_1,P_2\rangle$, $\langle P_1,P_3\rangle$, $\langle
P_2,P_3\rangle$. Thus $Q$ splits in the three previous lines plus a
fourth line $L$.
Observe that $\mathcal W(Z)$ contains the line $L'$ of intersection of
$\Pi$ with the plane $\Pi'$ spanned by $P_4,P_5,P_6$,
since a projection of $Z$ from a general point of $L'$ lies in a
reducible conic.
Since $Z$ is in LGP, then $\Pi'$ cannot contain any of the points $P_1,
P_2,P_3$. Then necessarily $L=L'$.
Hence $L$ misses $P_1$, and $P_1$ has multiplicity $2$ in $\mathcal
W(Z)\cap \Pi$. This proves the claim.

Next, we claim that the (quadratic) tangent cone of $\mathcal W(Z)$ at
$P_1$ is irreducible.
Indeed, the tangent cone contains the $5$ lines $\langle
P_1,P_i\rangle$, $i=2,\dots,5$, which lie in $\mathcal W(Z)$, and
observe that
no three of these lines are coplanar, since $Z$ is in LGP. This
completes the proof of the statement.
\end{proof}
So far, we were interested in Weddle surfaces maximizing the number of lines they contain. The next result goes in the opposite direction. It excludes that the Weddle surface $\mathcal W(Z)$ of $Z$ contains any
line through only one of the points in a set $Z$
of six points.
	
\begin{lemma}\label{l.line in Weddle}
	Let $Z=\{P_1,\ldots,P_6\}\subseteq \PP^3$ be a set of mutually distinct points in LGP.
	Let $L$ be a line in $\mathcal W(Z)$. Then the number of points in the intersection
    $$L\cap Z$$
    is either $0$ or $2$.
\end{lemma}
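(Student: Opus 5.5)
The plan is to assume that a line $L\subset\mathcal W(Z)$ meets $Z$ in exactly one point, say $P_1$ after renumbering, and derive a contradiction. Since $Z$ is in LGP, no line contains three points of $Z$, so this is the only case to exclude.

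\textbf{Step 1: $L$ lies in no plane $\langle P_1,P_i,P_j\rangle$.} Suppose $L\subset\Pi=\langle P_1,P_i,P_j\rangle$. The proof of Proposition~\ref{prop: ordnodes} shows that $\Pi\cap\mathcal W(Z)$ is exactly four lines: $\langle P_1,P_i\rangle$, $\langle P_1,P_j\rangle$, $\langle P_i,P_j\rangle$, and the line $L'=\Pi\cap\Pi'$, where $\Pi'$ is the plane of the other three points. Each of the first three lines meets $Z$ in two points, and $L'$ misses $Z$. So $L$ cannot be any of them, a contradiction. Consequently the five planes $\Pi_j=\langle L,P_j\rangle$, $j=2,\dots,6$, are pairwise distinct, and each meets $Z$ only in $\{P_1,P_j\}$.

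\textbf{Step 2: tangent-cone condition.} By Proposition~\ref{prop: ordnodes}, $P_1$ is an ordinary node. Hence $L$ lies in the tangent quadric cone at $P_1$. Projecting from $P_1$, this says that the direction $u$ of $L$ lies on the irreducible conic $C$ through the images of $P_2,\dots,P_6$. This does not yet give a contradiction, but it fixes the first-order data used in Step 3.

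\textbf{Step 3: the family of projections from points of $L$.} For $O\in L\setminus\{P_1\}$, project from $O$ to a fixed plane $H$. The image of $P_1$ is the fixed point $q=\pi_O(L)$. The image of each $P_j$ moves on the fixed line $\ell_j=\Pi_j\cap H$ through $q$, and by Step 1 the five lines $\ell_j$ are distinct. Parametrize $O$ by $t\in L\cong\PP^1$ with $O=P_1$ at $t=t_0$. Then $\pi_O(P_j)$ is a perspectivity $L\to\ell_j$ sending $t_0\mapsto q$. In affine coordinates centred at $q$, write $\pi_O(P_j)=s_j(t)v_j$ with $s_j$ a Möbius function vanishing at $t_0$. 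A conic through $q$ has equation $Q+l=0$, with $Q$ quadratic and $l$ linear. The condition that all five images lie on such a conic is the vanishing of the $5\times5$ determinant with rows
\[
\bigl(v_j^{(2)},\ l\text{-part of } v_j\cdot s_j(t)^{-1}\bigr),
\]
where $v_j^{(2)}$ is the row of three quadratic monomials in $v_j$ and the last two entries are the linear coordinates of $v_j$ multiplied by $s_j(t)^{-1}$. After clearing denominators, $(t-t_0)s_j(t)^{-1}$ is linear in $t$, so the determinant becomes a polynomial $D(t)$ of degree at most $2$. The hypothesis $L\subset\mathcal W(Z)$ means $D\equiv0$.

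\textbf{Step 4 (main obstacle): showing $D\not\equiv0$.} Expanding along the last two columns, the coefficients of $D$ are combinations of $3\times3$ minors of the quadratic parts, weighted by the coefficients of the linear functions $(t-t_0)s_j(t)^{-1}$. First I would check that the vanishing at $t=t_0$ is automatic and exactly encodes Step 2, namely $u\in C$. The remaining coefficients would then give two further independent conditions. I would compute these in normalized coordinates: $P_1=[1:0:0:0]$, $L$ the line $x_2=x_3=0$, and $P_2,\dots,P_6$ chosen using LGP. The aim is to show that these conditions force either four points of $Z$ to be coplanar, or two of the lines $\ell_j$ to coincide, which was excluded in Step 1.

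If this computation becomes unwieldy, I would instead evaluate $D$ at special parameters. One choice is the points of $L$ lying on planes $\langle P_i,P_j,P_k\rangle$ with $i,j,k\ge2$; there three images are collinear, and the conic would have to split. Another is to use the $2\times2$ minors coming from the structure of the quartic section $\Pi_j\cap\mathcal W(Z)$, which contains $L$ and $\langle P_1,P_j\rangle$. Either route should produce coplanarity of four points of $Z$, contradicting LGP.
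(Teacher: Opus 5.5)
\textbf{There is a genuine gap: Step 4 never shows $D\not\equiv 0$, and that is the entire content of the lemma.} Your Step 1 is correct, and it is a clean use of the plane sections from the proof of Proposition~\ref{prop: ordnodes}. Steps 2 and 3 only set up notation. In Step 4 the computational route is announced but not carried out, and nothing you have written supports the predicted outcome, namely that the two remaining coefficients force four coplanar points of $Z$ or $\ell_i=\ell_j$. There is also a small slip in Step 3. The perspectivity $L\to\ell_j$ with centre $P_j$ fixes the common point $L\cap H$, so $s_j(t_0)=0$ holds only if you choose $H$ through $P_1$.

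\textbf{The missing idea is the one you list only as a fallback, and it makes Steps 2--4 unnecessary.} Let $O=L\cap\langle P_2,P_5,P_6\rangle$. This is a single point, different from $P_1$ and not in $Z$, because $P_1\notin\langle P_2,P_5,P_6\rangle$. By your Step 1, $O$ lies on none of $\langle P_2,P_5\rangle$, $\langle P_2,P_6\rangle$, $\langle P_5,P_6\rangle$; otherwise $L\subset\langle P_1,P_i,P_j\rangle$. Hence $\pi_O(P_2),\pi_O(P_5),\pi_O(P_6)$ are three distinct points on the line $m=\pi_O(\langle P_2,P_5,P_6\rangle)$. The locus where $\delta(Z,P,2,2)\ge 1$ is closed in $\PP^3\setminus Z$, so $O\in L\subset\mathcal W(Z)$ means $\pi_O(Z)$ lies on a conic. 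That conic must contain $m$, so it is $m\cup m'$. By LGP none of $\pi_O(P_1),\pi_O(P_3),\pi_O(P_4)$ lies on $m$, so all three lie on $m'$. The plane spanned by $O$ and $m'$ therefore contains $P_1,P_3,P_4$, so $O\in\langle P_1,P_3,P_4\rangle$ and hence $L\subset\langle P_1,P_3,P_4\rangle$. This contradicts Step 1.

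Note that the contradiction is with Step 1, not directly with LGP. This route never produces four coplanar points of $Z$, so your closing sentence misstates what it yields. With that correction, this is exactly the paper's proof. The paper excludes $L\subset\langle P_1,P_i,P_j\rangle$ by the ordinary-node argument (the tangent cone at a point of $Z$ contains no three coplanar lines), where you use the four-line decomposition of the plane section.
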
	
\begin{proof}
Assume to the contrary that $L$ is a line contained in $\mathcal W(Z)$ such that (up to renumbering the points again)
$$P_1\in L \;\mbox{ and }\; 
P_2,\ldots,P_6\notin L.$$
Consider the planes 
\[\alpha=\langle P_2,P_3,P_4\rangle\ \ \text{and} \ \ \beta=\langle P_2,P_5,P_6\rangle\] 
and let \[P=L\cap \beta.\]
The line $L$ is not contained in $\beta$ because the points $P_1,P_2,P_5,P_6$ are not coplanar.
Moreover, we claim that $P$ is not contained in any of the lines $\langle P_2,P_5 \rangle$, $\langle P_2, P_6\rangle$ and $\langle P_5,  P_6\rangle$.  To this end assume (up to renumbering the points) $P\in\langle P_2,P_5\rangle$.
Then $L$ is contained in the plane $F=\langle P_1,P_2,P_5\rangle$. Thus the intersection $F\cap \mathcal{W}(Z)$ splits into four lines:
$$\langle P_1, P_2\rangle
\cup \langle P_1,P_5\rangle
\cup \langle P_2,P_5\rangle
\cup L.$$
Since $\mathcal{W}(Z)$ has an ordinary node at each of the points $P_1,P_2,P_5$, its tangent cone at each of these points is a nondegenerate quadric cone. Hence it cannot contain three coplanar lines.

As a consequence, the projection $\pi_P$ from the point $P$ to the plane $\alpha$
maps $P_2,P_5,P_6$ to three distinct collinear points (because $P$ lies in the plane $\beta$).
The images of the points $P_2,P_5,P_6$ are distinct because we have just argued that $P$ is not on any
line through two of the points. 

But $P$ is in the Weddle surface of $Z$, so, the conic through  $\pi_P(Z)$ splits into the union of two lines, one containing $P_2=\pi_P(P_2), \pi_P(P_5), \pi_P(P_6)$ and the other through the points 
$$\pi_P(P_1)=L\cap \alpha,\; P_3=\pi_P(P_3)\;\mbox{ and }\; P_4=\pi_P(P_4).$$
But then $L$ (and hence $P_1$)
and also $P_3, P_4$ are coplanar. However by the above token it is not possible that three coplanar lines in $\mathcal{W}(Z)$ (here $L$, $\langle P_1,P_3 \rangle$ , $\langle P_1, P_4\rangle$) pass through $P_1$. A contradiction.
\end{proof}
As an application of Lemma \ref{l.line in Weddle} we get the following result which will be useful later.

\begin{proposition} \label{seven pts}
	Let $X=\{P_1,\ldots,P_7\}\subseteq \PP^3$ be a set of seven points in LGP.
	Then the 2-Weddle locus of $X$  is a curve and it does not contain as a component any line joining two points of $X$.
\end{proposition}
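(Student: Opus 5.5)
The plan is to describe the $2$-Weddle locus of $X$ as an intersection of classical Weddle surfaces. Write $X_j=X\setminus\{P_j\}$ for $j=1,\dots,7$; each $X_j$ consists of six points in LGP.

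First we pin down the generic value. Seven points in LGP impose independent conditions on quadrics: the six points of $X_j$ already do, by Remark \ref{ClassicalWeddleResult}, and the ideal of $X_j$ contains a pencil of quadrics whose base locus is finite. So $\dim[I(X)]_2=3$. For a general $P$ the projection $\pi_P(X)$ consists of seven general points of $\PP^2$, which lie on no conic, so $\delta(X,2,2)=0$. Hence the $2$-Weddle locus of $X$ is the closure of the set of $P\notin X$ such that $\pi_P(X)$ lies on a conic.

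Next we identify this set. If $\pi_P(X)$ lies on a conic, then so does $\pi_P(X_j)$ for every $j$, so $P\in\bigcap_j\mathcal W(X_j)$. For a converse we only need containment, since dimension is what matters. Let $P$ lie in $\mathcal W(X_1)\cap\mathcal W(X_2)$ away from the finitely many bad points. Then $\pi_P(X_1)$ lies on a conic $C_1$ and $\pi_P(X_2)$ lies on a conic $C_2$. These two conics share the five points $\pi_P(P_3),\dots,\pi_P(P_7)$. If those five points are distinct and no four are collinear, then $C_1=C_2$, so all seven images lie on one conic. Points $P$ where this fails lie on lines or planes spanned by points of $X$, and we handle them directly.

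This gives the locus as $\mathcal W(X_1)\cap\mathcal W(X_2)$ up to such lower-dimensional pieces. It is nonempty in dimension $\ge1$: it contains the lines $\langle P_i,P_j\rangle$? No — projecting from a general point of $\langle P_1,P_2\rangle$ sends $X$ to six points, and six general points lie on no conic. It does contain the intersection line of the plane $\langle P_1,P_2,P_3\rangle$ with the plane $\langle P_4,P_5,P_6\rangle$, provided $P_7$ is handled, so nonemptiness needs its own argument. The natural choice is the point $\mathcal W(X_1)\cap\mathcal W(X_2)$, a surface intersection which is nonempty and of dimension $\ge1$ in $\PP^3$. Alternatively, the Macaulay matrix in \eqref{OrigMacDualSeqPrime} is a $3\times4$ matrix of linear forms, and its maximal minors define a locus of codimension at most $2$ that is nonempty. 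This gives dimension $\ge1$.

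For dimension $\le1$, we must show that $\mathcal W(X_1)$ and $\mathcal W(X_2)$ share no common surface component, i.e.\ that the $2$-Weddle locus is not a surface. I expect this to be the main obstacle, because Weddle surfaces need not be irreducible. I would try the following: if the two quartics shared a component, a general point $P$ of it would project all of $X$ onto a conic. Restricting to the plane $\langle P_1,P_2,P_3\rangle$, Remark \ref{ClassicalWeddleResult} describes $\mathcal W(X_7)\cap\Pi$ as exactly the three side lines plus one extra line $L$. For $X$ one checks that only finitely many points of $\Pi$ qualify. A general $P\in\langle P_1,P_2\rangle$ fails because the six images include $\pi_P(P_3),\dots,\pi_P(P_7)$, five points in general position whose conic misses $\pi_P(P_1)$ for general $P$. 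So every component meets $\Pi$ in finitely many points, which rules out surfaces.

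Finally, suppose the line $\langle P_1,P_2\rangle$ were a component of the $2$-Weddle locus of $X$. Then it would lie in $\mathcal W(X_j)$ for each $j\ne1,2$. That is allowed by Lemma \ref{l.line in Weddle}, since the line meets $X_j$ in $2$ points. The argument just given shows directly that its general point is not in the locus: the images of $P_3,\dots,P_7$ determine a unique conic, and $\pi_P(P_1)=\pi_P(P_2)$ moves along a line as $P$ varies, so it lies on that conic for only finitely many $P$. Here one must check that this conic varies compatibly, which is the delicate part of the last step. I would use Lemma \ref{l.line in Weddle} to exclude lines through a single point, and the above count for lines through two points.
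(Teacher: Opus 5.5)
There is a genuine gap at the step everything depends on. Your bound $\dim\le 1$, which excludes the side lines of $\Pi=\langle P_1,P_2,P_3\rangle$, and your treatment of the second assertion both rest on one claim. The claim is that for general $P\in\langle P_1,P_2\rangle$ the point $\pi_P(P_1)=\pi_P(P_2)$ is not on the conic through $\pi_P(P_3),\dots,\pi_P(P_7)$. You state this and then leave it as ``delicate''. Your heuristic is also off: that image does not move at all. It is the fixed point $\langle P_1,P_2\rangle\cap H$, and it is the conic that moves.

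The missing idea is that Lemma~\ref{l.line in Weddle} does apply. It fails for the $X_j$ with $j\neq 1,2$, which you checked, but it works for $X_1=X\setminus\{P_1\}$. If a general $P\in\langle P_1,P_2\rangle$ projected $X$ onto a conic, it would also project $X_1$ onto a conic, so $\langle P_1,P_2\rangle\subset\mathcal W(X_1)$. Yet this line meets $X_1$ only in $P_2$, contradicting the lemma. This is exactly the paper's argument for the second assertion. With it, your plane-section argument for $\dim\le 1$ also goes through, once you dispose of the fourth line $\Pi\cap\langle P_4,P_5,P_6\rangle$. From a general point of that line, the only conic through $\pi_P(X_7)$ is a pair of lines, and it misses $\pi_P(P_7)$ by LGP. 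So that line is not in the locus either, contrary to your aside.

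The lower bound $\dim\ge 1$ is not established either. Your claim that the locus is $\mathcal W(X_1)\cap\mathcal W(X_2)$ ``up to lower-dimensional pieces'' is false. The exceptional set is the union of the ten lines $\langle P_a,P_b\rangle$, $3\le a<b\le 7$. These lie on both Weddle surfaces but are not in the locus (this is the second assertion). So knowing $\dim\bigl(\mathcal W(X_1)\cap\mathcal W(X_2)\bigr)\ge 1$ gives nothing.

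The determinantal route works only after you prove the rank-$\le 2$ locus of the $3\times 4$ matrix is nonempty, which you merely assert. There are two ways to supply this:
\begin{itemize}
\item The Thom--Porteous class of $\mathcal O(-1)^4\to\mathcal O^3$ is $c_1^2-c_2=6h^2\neq 0$, so the locus is nonempty, and every component then has codimension at most $2$.
\item Argue as the paper does: $\PP([I(X)]_2)\cong\PP^2$, this net meets the discriminant hypersurface in a curve of cones, and their vertices form the locus.
\end{itemize}

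Finally, finiteness of the base locus of $[I(X_j)]_2$ does not by itself show that $P_j$ imposes a new condition. What you need is that this base locus is exactly $X_j$.
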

\begin{proof}
Let $\mathcal C$ be the set of cones in $[I(X)]_2$ and let $\mathfrak c$ be the set of the vertices of the cones in $\mathcal C$, i.e., $\mathfrak c $ is the 2-Weddle locus of $X$.  	
From \cite{maroscia} the $h$-vector of $X$ is $(1,3,3)$.
Thus $\PP([I(X)]_2)\cong \PP^2$. Since the family of singular quadrics in $\PP^3$ is a hypersurface and contains no planes,  $\mathcal C$ has codimension 1 in $[I(X)]_2$. So $\mathfrak c$ is a curve in $\PP^3$. 

We need to exclude that  $\mathfrak c$ contains any line through two points of $X.$
Assume, by contradiction, that the line $L=\langle P_1, P_2\rangle$ is a component of $\mathfrak c$.
So, after projecting from a point in $L,$ the point $P_1$ collides with $P_2$ and $P_2,\ldots,P_7$ are on a conic.
Hence, the line $L$ is contained in the Weddle surface of $P_2,\ldots,P_7$ and this contradicts Lemma~\ref{l.line in Weddle}.
\end{proof}

We will see in Section \ref{sec:+1} that the curve obtained in Proposition \ref{seven pts} is ACM of degree 6 and genus 3. 


\subsection{A totally reducible Weddle surface} \label{totally reducible}

In Section \ref{more25 lines} we saw that there exist sets of six points for which the Weddle surface does not behave in the expected way, even though LGP holds. From now on, we consider sets of six points not in LGP and we record an even stranger behavior for their Weddle surfaces, while still having degree 4. 

Before turning to details we recall that for integers $a,b\geq 1$ an $(a,b)$-grid in $\PP^3$ is the set of $ab$ intersection points of a family $a$ skew lines with another family of $b$ skew lines.

\begin{proposition}\label{p. reducible Weddle surface and harmonic}
Let $L_1,L_2,L_3$ be three noncoplanar lines concurrent at a point $O$.
Let $Z=\{P_1,P_2,P_3, Q_1,Q_2,Q_3\}$ be a set of six points in $\PP^3\setminus\{O\}$,
with $P_i,Q_i\in L_i$ for $i=1,2,3$.
For each $i$ let $H_i\in L_i$ be the harmonic conjugate of $O$ with respect to $P_i,Q_i$.
\noindent
Then the Weddle surface of $Z$ is the union of the four planes
\[
\mathcal W(Z)=\langle L_1,L_2\rangle \ \cup\ \langle L_1,L_3\rangle \ \cup\ \langle L_2,L_3\rangle \ \cup\ \langle H_1,H_2,H_3\rangle.
\] 
\end{proposition}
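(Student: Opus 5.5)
Coordinates first. Put $O=[0:0:0:1]$ and let $L_1,L_2,L_3$ be spanned by $O$ and the three coordinate points $[1:0:0:0],[0:1:0:0],[0:0:1:0]$. Rescaling the first three coordinates, we may take $P_i=e_i+a_iO$ and $Q_i=e_i+b_iO$ with $a_i\neq b_i$; here $e_1,e_2,e_3$ are the first three coordinate vectors. The harmonic conjugate of $O$ with respect to $P_i,Q_i$ is then $H_i=e_i+\tfrac{a_i+b_i}{2}O$. A further change of coordinates $w\mapsto w-\sum c_ix_i$, with $c_i=(a_i+b_i)/2$, keeps $O$ and the lines $L_i$ fixed and moves $H_i$ to $e_i$. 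So we may assume $P_i=e_i+a_iO$, $Q_i=e_i-a_iO$ with $a_i\neq0$. The plane $\langle H_1,H_2,H_3\rangle$ is then $\{w=0\}$, and the planes $\langle L_i,L_j\rangle$ are the coordinate planes $x_k=0$.

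The degree is settled by two facts. The six points impose independent conditions on quadrics: a quadric through $P_i,Q_i$ must have $x_i^2$-coefficient $-a_i^2\cdot(w^2\text{ coeff})$ and $x_iw$-coefficient $0$, so these are six independent linear conditions. A general projection of $Z$ lies on no conic, since the projected configuration is six points on three concurrent lines with general parameters (in particular, the generic center is not on the planes listed below). Hence, as in Remark~\ref{ClassicalWeddleResult}, the $2$-Weddle scheme is $\det J$ for a $4\times4$ matrix $J$ of linear forms, which is a quartic. It therefore suffices to show that the four planes $x_1=0$, $x_2=0$, $x_3=0$, $w=0$ all lie in the Weddle locus. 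Their union is then a quartic contained in the zero set of $\det J\neq0$, so $\det J=c\,x_1x_2x_3w$.

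To show each plane lies in the locus:
\begin{itemize}
\item \textbf{The plane $x_3=0$.} Projecting from a general point of this plane sends $L_1$ and $L_2$ to a single line through the image of $O$, so four of the image points are collinear. The conic is then this line together with the line through the images of $P_3,Q_3$.
\item \textbf{The planes $x_1=0$ and $x_2=0$.} These follow by the same argument.
\item \textbf{The plane $w=0$.} Here I use Lemma~\ref{l. harmonic and conic}. For $P$ general in $\langle H_1,H_2,H_3\rangle$, projection from $P$ maps $L_1,L_2,L_3$ to three lines through $A=\pi_P(O)$. Projection preserves cross-ratio, so $\pi_P(H_i)$ is the harmonic conjugate of $A$ with respect to $\pi_P(P_i),\pi_P(Q_i)$. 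The three images $\pi_P(H_i)$ are collinear, because the $H_i$ together with $P$ span a plane, which projects to a line. By the lemma, the six image points lie on a conic.
\end{itemize}
Equivalently, one can exhibit the explicit quadric cones with vertex $P$ in the pencil spanned by $w^2-\sum a_i^{-2}x_i^2$ and the reducible quadrics, which checks membership directly.

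The main obstacle is the nondegeneracy: confirming $\det J\not\equiv0$, i.e.\ that a general center gives no conic. I would verify this by choosing one explicit center, say $[1:1:1:1]$ with all $a_i=1$ after scaling, and checking that the $6\times6$ conic interpolation matrix of the projected points is invertible. The equality of the Weddle scheme with the reduced union of the four planes (up to the scalar $c$) then follows from the degree count.
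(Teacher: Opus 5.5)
Your proposal is correct and follows essentially the same route as the paper. Independence of $Z$ on quadrics ($h$-vector $(1,3,2)$) makes the $2$-Weddle scheme the quartic $\det J$. The planes $\langle L_i,L_j\rangle$ and $\langle H_1,H_2,H_3\rangle$ lie in the locus by the collinearity argument and Lemma~\ref{l. harmonic and conic}, and the degree count forces equality.

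The only real difference is in the nondegeneracy step. You check it by an explicit computation at $[1:1:1:1]$; this check does succeed, and it also follows at once from the other implication of Lemma~\ref{l. harmonic and conic}. The paper instead cites the grid criterion of \cite{CM}. One slip in your side remark: the quadric should read $w^2-\sum a_i^{2}x_i^2$, and the quadrics through $Z$ form a $4$-dimensional space, not a pencil.
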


\begin{proof}We first note that the general  projection of $Z$  is not contained in a conic since $Z$ is not a (2,3)-grid (cf. \cite{CM} Proposition 4.3). 

Now we prove that the four planes in the statement are contained in $\mathcal W(Z).$
For a general point $P$ on $\langle L_i,L_j\rangle$ the projection $\pi_P$ maps the four points on $L_i\cup L_j$ on four points on a line; thus $Z$ on a reducible conic. 
Furthermore, for a general point $P$ on the plane spanned by $H_1,H_2,H_3$, the projection $\pi_P$ maps the points $H_1,H_2,H_3$ into a line and thus, by Lemma \ref{l. harmonic and conic}, $Z$ into a conic.

Finally, $Z$ has $h$-vector $(1,3,2)$ hence the $2$-Weddle scheme is a surface of degree $4$.
Since we have exhibited four distinct planes contained in it, it cannot have any additional component without increasing the degree. Therefore the $2$-Weddle scheme is exactly the above union of planes, and the same holds for the $2$-Weddle locus.
\end{proof}
In the following example we exhibit the interpolation matrix for sets as in Proposition~\ref{p. reducible Weddle surface and harmonic}.
\begin{example}\label{e. reducible Weddle surface}
 Let $Q=[x:y:z:w]\in \PP^3$ be the (variable) point from which we project.
We write $\Lambda(Z+2Q,2)$ with respect to the monomial basis
\[
x^2,\; y^2,\; z^2,\; w^2,\; xy,\; xz,\; xw,\; yz,\; yw,\; zw
\]
of $[R]_2.$
 Take
   $$O=[0:0:0:1],\; P_1=[1:0:0:0],\; P_2=[0:1:0:0],\; P_3=[0:0:1:0]$$
   and
   $$Q_1=[a:0:0:1],\; Q_2=[0:b:0:1],\; Q_3=[0:0:c:1]$$
   for some nonzero numbers $a,b,c$.
   Then the interpolation matrix defining the Weddle surface has the form
{\small$$\begin{pmatrix}
1 & 0 & 0 & 0 & 0 & 0 & 0 & 0 & 0 & 0\\
0 & 1 & 0 & 0 & 0 & 0 & 0 & 0 & 0 & 0\\
0 & 0 & 1 & 0 & 0 & 0 & 0 & 0 & 0 & 0\\
a^2 & 0 & 0 & 1 & 0 & 0 & a & 0 & 0 & 0\\
0 & b^2 & 0 & 1 & 0 & 0 & 0 & 0 & b & 0\\
0 & 0 & c^2 & 1 & 0 & 0 & 0 & 0 & 0 & c\\
2x & 0 & 0 & 0 & y & z & w & 0 & 0 & 0\\
0 & 2y & 0 & 0 & x & 0 & 0 & z & w & 0\\
0 & 0 & 2z & 0 & 0 & x & 0 & y & 0 & w\\
0 & 0 & 0 & 2w & 0 & 0 & x & 0 & y & z
\end{pmatrix}.$$ }  
Computing its determinant (up to a nonzero scalar) we get
\[
2xyz(bcx+acy+abz-2abcw), 
\]
and we note that the plane
$$bcx+acy+abz-2abcw=0$$
intersects the lines $L_1, L_2, L_3$ in points
$$H_1=[2a:0:0:1],\;
H_2=[0:2b:0:1],\;
H_3=[0:0:2c:1]$$
such that $(P_i,Q_i,O,H_i)$ are harmonic as proved in Proposition \ref{p. reducible Weddle surface and harmonic}.
\end{example}


\subsection{A nonreduced  equidimensional Weddle scheme} \label{non-reduced}

Let $Z=Z_1\cup\{P\}\subset \PP^3$, where $Z_1$ consists of five general points
contained in a plane $H\cong\PP^2$, and $P\in \PP^3\setminus H$ is a general point.
Let $C\subset H$ be the (unique) conic through $Z_1$, and let $\mathcal Q\subset\PP^3$
be the quadric cone with vertex $P$ over $C$.

If $Q\in \mathcal Q\setminus Z$, then the projection from $Q$ sends the five points of $Z_1$
to five points lying on the image of $C$, hence on a conic in the target plane, and clearly it also sends $P$ to this conic; therefore
$Q\in \mathcal W(Z)$.
If instead $Q\in H$, then the projection from $Q$ sends $Z_1$ to a set of five collinear points (since $Z_1\subset H$), so again the image of $Z$ lies on a conic
(e.g., the union of that line with any line through the remaining point); hence $H\subset \mathcal W(Z)$.
On the other hand, for $Q\notin H\cup \mathcal Q$ the projected six points are not contained in any conic, so the $2$-Weddle locus is supported on $H\cup \mathcal Q$.

Since the $h$-vector of $Z_1$ is $(1,2,2)$, adjoining a general point $P$ gives the
$h$-vector of $Z$ equal to $(1,3,2)$; in particular, the $2$-Weddle scheme is cut out by
the determinant of a $4\times 4$ matrix of linear forms, hence is a quartic surface.
Because its support is contained in $H\cup \mathcal Q$ and both $H$ and $\mathcal Q$
occur as components of the locus, the quartic must contain $H$ with multiplicity at least $2$.
In particular, the $2$-Weddle scheme is equidimensional of codimension $1$ but not reduced.


\subsection{Nonmaximal minors} \label{not max minors}

In this subsection we describe another phenomenon arising from 6 points, namely a situation where the expected codimension of the ideal of maximal minors is not obtained, and in fact where we need to look at 
nonmaximal minors.

Let $L_1,L_2$ be disjoint lines in $\PP^3$ and let $P_1,P_2,P_3$ be points on $L_1$ and $Q_1,Q_2,Q_3$ points on $L_2$. Let $Z = \{P_1,P_2,P_3,Q_1,Q_2,Q_3\}$. The $h$-vector of $Z$ is $(1,3,2)$, so as  in the sequence \eqref{OrigMacDualSeqPrime}
we obtain a $4 \times 4$ Macaulay  duality matrix of linear forms. 

One would therefore expect the 2-Weddle locus to be a surface of degree 4. But notice that $Z$ is a $(2,3)$-grid, so its general projection is a complete intersection of type $(2,3)$ in $\PP^2$. Thus, the projection of $Z$ from a general point lies on a conic (in fact $\dim[I(Z_P)]_2=1$). Hence by definition the 2-Weddle locus is the set of points from which the projection of $Z$ lies on a {\it pencil} of conics. 

 Such points must lie on either $L_1$ or $L_2$ (thus obtaining an image under projection consisting of three points on a line plus one more point), so the 2-Weddle locus is $L_1 \cup L_2$. Indeed, $\dim[Z_P]_2\ge 2$ when $Z_P$ is either a set of 4 points or all its points are collinear except (at most) one.

What about the 2-Weddle scheme? The matrix obtained by \eqref{OrigMacDualSeqPrime} is  a $4 \times 4$ matrix of linear forms, but the geometry just described means that the determinant of this matrix must be zero. We thus consider the  $3 \times 3$ minors. The degree formula in Lemma \ref{J1 lemma} does not apply since the scheme defined by these nonmaximal minors does not have the expected codimension. 

We illustrate what is happening here by means of the following example.

\begin{example}\label{e.(2,3)-grid}
Consider the $(2,3)$-grid in the quadric $xw-yz$, 
{
\[
Z = \{[1:0:0:0], [0:1:0:0], [1:1:0:0], [0:0:1:0], [0:0:0:1],  [0:0:1:1] \}.
\]
}
\normalsize
The Macaulay duality matrix $B$   is
\[
B=\left [
\begin{array}{cccccc}
      z & 0 & x & 0 \\ 
      w & 0 & 0 & x \\
      0 & z & y & 0 \\
      0 & w & 0 & y 
\end{array}
\right ].
\]    
We get $\det(B)=0 $ as expected.

The $3 \times 3$ minors of $B$ are:
\[
(xzw,xw^2,-yzw,-yw^2,-xz^2,-xzw,yz^2,yzw,-xyz,-xyw,
\]\[
y^2z,y^2w,x^2z,x^2w,-xyz,-xyw).
\]  
They generate an ideal whose  primary decomposition is
\[
(x,y) \cap (z,w) \cap (w^2,z^2,y^2,x^2,yzw,xzw,xyw,xyz).
\]

Note that $(w^2,z^2,y^2,x^2,yzw,xzw,xyw,xyz) $ 
is primary for the irrelevant ideal.
Thus the saturation of the ideal of the $3\times3$ minors is $(yw, xw, yz, xz)$, 
which has primary decomposition
$(x, y) \cap  (z, w)$, 
so the 2-Weddle scheme consists of the two lines, $x=y=0$ and $w=z=0$,
which are grid lines for the (2,3)-grid $Z$.
Hence the 2-Weddle scheme is the same as the 2-Weddle locus.
\end{example}

\begin{remark} \label{compare matrices}
In the last example, 
the interpolation matrix is
\[
N = 
\left [
\begin{array}{cccccccccccccc}
1 & 0 & 0 & 0 & 0 & 0 & 0 & 0 & 0 & 0 \\ 
      0 & 0 & 0 & 0 & 1 & 0 & 0 & 0 & 0 & 0 \\
      0 & 0 & 0 & 0 & 0 & 0 & 0 & 1 & 0 & 0 \\
      0 & 0 & 0 & 0 & 0 & 0 & 0 & 0 & 0 & 1 \\
      1 & 1 & 0 & 0 & 1 & 0 & 0 & 0 & 0 & 0 \\
      0 & 0 & 0 & 0 & 0 & 0 & 0 & 1 & 1 & 1 \\
      2x & y & z & w & 0 & 0 & 0 & 0 & 0 & 0 \\
      0 & x & 0 & 0 & 2y & z & w & 0 & 0 & 0 \\
      0 & 0 & x & 0 & 0 & y & 0 & 2z & w & 0 \\
      0 & 0 & 0 & x & 0 & 0 & y & 0 & z & 2w
\end{array}
\right ].
\]
After row and column reduction (as in Remark \ref{r. ReducedIntMat})
this becomes
\[
N'=\left [
\begin{array}{cccccc}
      z & w & 0 & 0 \\ 
      0 & 0 & z & w \\
      x & 0 & y & 0 \\
      0 & x & 0 & y 
\end{array}
\right ]
\]
which is the transpose of the Macaulay duality matrix above.
We have $\det N = 0$, which we expect since the general projection of $Z$ is a complete intersection of type $(2,3)$,
so we need to look at subminors.
The ideal of $9 \times 9$ minors of $N$ gives the same ideal 
as in Example \ref{e.(2,3)-grid}.
\end{remark}


\section{\texorpdfstring{$d$}{d}-Weddle loci for some general sets of   points in \texorpdfstring{$\PP^3$}{P3}}\label{sec:WeddleGeneralPoints}

In this section we study $d$-Weddle loci in $\PP^3$ for several families of general point sets.
The guiding question is: if $Z$ is a set of general points in $\PP^3$, then for which centers of projection $P\in\PP^3$ does the projected set
$\pi_P(Z)\subset\PP^2$ fail to impose the expected number of conditions on plane curves of degree $d$?
Equivalently, for which $P$ does $\dim [I(\pi_P(Z))]_d$ jump above its generic value?
We begin by reviewing a few low-cardinality cases, then discuss and correct some claims of Emch,
and finally treat the families $|Z|=\binom{d+2}{2}+k$ for small integers $k$.

 For five general points the only way a projection can lie on more than one conic is for one of the points to ``disappear", i.e., for $P$ to lie on a line joining two of the points. Hence the 2-Weddle locus is a curve, union of the secant lines, of degree $\binom{5}{2} = 10$. 
 For six points the 2-Weddle locus is the classical Weddle surface, which has degree~4. 
 
 For seven general points, Emch claims that it is classically known that the 2-Weddle locus is a curve of degree 6 and genus 3 (we will confirm this). Emch omits the case of 8 and 9 points. For 10 points he shows (\cite[page 273]{EMCH}) that the 3-Weddle locus is a surface of degree 10. For 11 points he claims 
the 3-Weddle locus is a curve of degree 52 (\cite[page 275]{EMCH}), but this is incorrect: our argument gives a curve of degree 45, and we have confirmed this using symbolic computation software (\cite{DGPS}). 

\subsection{$d$-Weddle varieties for $\binom{d+2}{2}$ general points} 
Theorem \ref{WeddleSchemeSigmaThm},   in the case $n=2$, determines the $d$-Weddle scheme for a general set of $\binom{d+2}{2}$ points in $\PP^3$. In particular, in $\PP^3$ it implies that the $d$-Weddle scheme for $\binom{d+2}{2}$ general points
is a surface of degree $\binom{d+2}{3}$.

We will need one further refinement.  If $P$ lies on this $d$-Weddle surface,
then $\pi_P(Z)$ lies on at least one plane curve of degree $d$.
Can it happen that $\pi_P(Z)$ lies on a \emph{pencil} of degree $d$ curves,
rather than on a unique one? Equivalently, can it happen that
\[
\dim_\field [I(\pi_P(Z))]_d \ge 2,
\]
i.e., that in \eqref{OrigMacDualSeqPrime} the multiplication map
$\times \partial_{L_P}$ has cokernel of dimension at least $2$?
 We expect that the answer is  {\it no}. The next theorem shows that this does not occur for a general point of any component.

\begin{theorem}\label{t. not a pencil}
Let $Z$ be a general set of $\binom{d+2}{2}$ points in $\PP^{3}$. For a point $P \in \PP^{3}$ 
let $\pi_P$ be the projection to a general plane $H$. 
Let $\Sigma$ be the $d$-Weddle scheme of $Z$.  
For a general point $Q$ in any component of  $\Sigma$, the projection  $\pi_Q(Z)$ does not lie on a pencil of curves of degree $d$ in $H$.
\end{theorem}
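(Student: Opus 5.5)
The plan is a dimension count on an incidence variety. We show that for general $Z$ the locus of centers $P$ for which $\pi_P(Z)$ lies on a pencil of degree $d$ curves has dimension at most $1$. Since $\Sigma$ is a surface by Theorem~\ref{WeddleSchemeSigmaThm} (each component has codimension one, being cut out by the determinant of a square matrix of linear forms), this locus cannot contain a component of $\Sigma$. Hence a general point $Q$ of any component is not in it.

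Set $r=\binom{d+2}{2}$ and let $U\subset(\PP^3)^r$ be the open set of $r$-tuples of distinct points. Consider
\[
\mathcal I=\{(Z,P)\in U\times\PP^3 \;:\; P\notin Z,\ \dim_\field[I(\pi_P(Z))]_d\ge 2\},
\]
which is closed in the open set where $P\notin Z$, by semicontinuity of the rank of $T'(Z+dP,d)$. We split according to whether $\pi_P$ is injective on $Z$.

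\textbf{Non-injective case.} If $P$ lies on a secant line of $Z$, then for general $Z$ the points $P$ form the finite union of $\binom r2$ lines. This part is one-dimensional, so it is harmless.

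\textbf{Injective case.} Project $\mathcal I$ to $\PP^3$ via $(Z,P)\mapsto P$. For fixed $P$, the map $Z\mapsto \pi_P(Z)\in(\PP^2)^r$ has fibers of dimension $r$, since each point moves freely on its line through $P$. So it suffices to show that the locus $\mathcal B\subset(\PP^2)^r$ of $r$-tuples of distinct points lying on a pencil of degree $d$ curves has codimension at least $2$. Then
\[
\dim\mathcal I\le 3+r+(2r-2)=3r+1,
\]
and the general fiber over $U$, which has dimension $3r$, has dimension at most $1$.

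\textbf{Counting $\mathcal B$.} Lying on a pencil means every point of $\pi_P(Z)$ lies in the base locus of the pencil. Stratify by the degree $e$ of the fixed component $F$ of the pencil, $0\le e\le d-1$. Write the pencil as $F\cdot\Lambda$, with $\Lambda$ a pencil of degree $d-e$ without fixed component, and suppose $a$ of the points lie on $F$ and the rest lie in the finite base locus of $\Lambda$.

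The parameter count for this stratum is at most
\[
\Bigl(\tbinom{e+2}{2}-1\Bigr)+a+2\Bigl(\tbinom{d-e+2}{2}-2\Bigr),
\]
coming from choosing $F$, then the $a$ points on it, then $\Lambda$, with finitely many choices for the remaining $r-a$ points. This is subject to $r-a\le (d-e)^2$; in particular $a=r$ is forced when $e=d$. The target is to compare this with $2r-2$.

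\textbf{The extreme strata.}
\begin{itemize}[leftmargin=*]
\item For $e=0$ all $r$ points lie in a complete intersection of two degree $d$ curves. The count is $2(r-2)=2r-4$, i.e. codimension $4$.
\item When essentially all points lie on $F$, one needs $a>\binom{e+2}{2}-1$ points on a curve of degree $e$. The count is then dominated by $\binom{e+2}{2}-1+a$, and the codimension is roughly $r-\binom{e+2}{2}+1\ge 2$ for $e\le d-1$.
\end{itemize}

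\textbf{Main obstacle.} The hardest part is the bookkeeping for the intermediate strata. One must check that
\[
\tbinom{e+2}{2}-1+a+2\tbinom{d-e+2}{2}-4 \le 2r-2
\quad\text{whenever}\quad r-a\le (d-e)^2,
\]
uniformly in $d$, $e$ and $a$. Here $\binom{d+2}{2}-\binom{e+2}{2}$ and $\binom{d-e+2}{2}$ compete, and small $d$ needs separate attention: for $d=1,2$ one can check directly, as in the classical Weddle case. If the crude count fails in some stratum, I would refine it. The first refinement is to note that the residual points are a subset of the finitely many base points of $\Lambda$, so that $\Lambda$ is not a free parameter but is constrained by them. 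The second is to use that a degree $e$ curve carrying more than $\binom{e+2}{2}-1$ general-type points is overdetermined, which gains the missing codimension.
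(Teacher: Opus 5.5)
Your reduction is sound. For general $Z$, the centers $P\notin Z$ with $\dim_\field[I(\pi_P(Z))]_d\ge 2$ form a closed set, and the secant lines contribute only a curve. If $\mathcal B\subset(\PP^2)^r$ has codimension at least $2$, your fibre count gives a locus of dimension $\le 1$, which cannot contain a component of the pure two-dimensional $\Sigma$.

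The gap is the codimension estimate for $\mathcal B$. Everything rests on it, you leave it as the ``main obstacle'', and the stratified count you propose does not deliver it. Take $d=3$, $e=1$, $a=9$, i.e.\ nine points on a line and one more. Your bound gives $2+9+2(6-2)=19$, which exceeds $2r-2=18$. The ``extreme strata'' bullet gets around this only by dropping the $2\bigl(\tbinom{d-e+2}{2}-2\bigr)$ parameters of $\Lambda$ without justification.

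The trouble is structural. You are bounding the space of triples (points, $F$, $\Lambda$). When there are few residual points, the fibre over a fixed configuration (all admissible $\Lambda$) is positive dimensional, so the bound overestimates the stratum. Your first refinement does not cure this: choosing the $r-a$ residual points freely and then a pencil through them gives $2(r-a)+2\bigl(\tbinom{d-e+2}{2}-(r-a)-2\bigr)$, which is the same number. Your second refinement is not made precise.

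The missing idea is to count single curves instead of pencils. Let $\mathcal J=\{(C,p_1,\ldots,p_r): p_i\in C\}\subset\PP([R_H]_d)\times(\PP^2)^r$, where $\PP([R_H]_d)\cong\PP^{r-1}$.
\begin{itemize}
\item Every $C$ is one-dimensional, so each fibre $C^r$ has dimension $r$. Hence $\dim\mathcal J\le (r-1)+r=2r-1$.
\item The fibre of $\mathcal J\to(\PP^2)^r$ over $p$ is the projective space of degree $d$ curves through $p_1,\ldots,p_r$. It has dimension $\ge 1$ exactly when $p\in\mathcal B$.
\item Therefore $\dim\mathcal B\le 2r-2$ for every $d$, with no fixed-component case analysis.
\end{itemize}

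With this, your proof is complete, and it is genuinely different from the paper's. The paper argues by contradiction: it deletes the points of $Z$ one at a time and claims that each $\pi_P(P_{k+1})$, being ``a general point of $H$'', imposes an independent condition on the degree-$d$ curves through the remaining projected points. Your global count is the more robust route. That genericity claim cannot hold for all $k$ at a point $P\in\Sigma$: if it did, the projected points would impose independent conditions and $[I(\pi_P(Z))]_d$ would be $0$. The reason is that $P$ is constrained by all of $Z$, including $P_{k+1}$.
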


\begin{proof}
Recall that $$q=\deg(\Sigma)=\binom{d+2}{3}$$ by Theorem~\ref{WeddleSchemeSigmaThm}.
For $P\in\Sigma$ we have $\rank (\times\partial_{L_P})\le q-1$, and the cokernel of $\times\partial_{L_P}$ is canonically
isomorphic to $[I(Z)\cap I(P)^d]_d$, hence also to $[I(\pi_P(Z))]_d$.
Thus
\[
\dim_{\mathbb C} [I(\pi_P(Z))]_d = \dim_{\mathbb C}\coker(\times\partial_{L_P}) = q-\rank (\times\partial_{L_P}).
\]
In particular, $\pi_P(Z)$ lies on a \emph{pencil} of degree $d$ curves if and only if
\[
\dim_{\mathbb C} [I(\pi_P(Z))]_d \ge 2
\quad\Longleftrightarrow\quad
\rank (\times\partial_{L_P})\le q-2.
\]
Equivalently, all $(q-1)\times(q-1)$ minors of a matrix representing $\times\partial_{L_P}$ vanish.
Let $\Gamma\subset\PP^3$ be the closed subscheme defined by these minors; then $\Gamma\subseteq \Sigma$.

Assume for contradiction that there exists an irreducible component $\Sigma_0$ of $\Sigma$ such that,
for a general point $P\in\Sigma_0$, the set $\pi_P(Z)$ lies on a pencil of degree $d$ curves, i.e.,
$P\in\Gamma$.

Fix such a general $P\in\Sigma_0$ and set $Z_0=Z$.
For $k\ge 0$ set $$Z_{k+1}=Z\setminus\{P_1, \ldots, P_{k+1}\}.$$
We claim inductively that for each $k$ the space of degree $d$ curves in $H$ containing $\pi_P(Z_k)$
has 
\begin{equation}\label{eq:dimstep}
\dim_{\mathbb C} [I(\pi_P(Z_k))]_d \ge k+2 .
\end{equation}

For $k=0$, this is exactly the assumption that $\pi_P(Z)$ lies on a pencil of curves of degree~$d$.

Assume~\eqref{eq:dimstep} holds for $k$.
Since $Z$ is general in $\PP^3$ and $P$ is a general point of $\Sigma_0$,
the projected point $\pi_P(P_{k+1})$ is a general point of $H$.

Since we impose this condition only for finitely many indices $k$, we may choose $P$ general in $\Sigma_0$ so that it holds for all $k.$
 In particular,   we may assume that $\pi_P(P_{k+1})$ is not contained in every curve of the linear system
$V_{k+1}=[I(\pi_P(Z_{k+1}))]_d$; equivalently, there exists a curve $C\in V_{k+1}$ with
$\pi_P(P_{k+1})\notin C$.
Hence the condition of passing through $\pi_P(P_{k+1})$ imposes one independent linear condition on
$V_{k+1}$, so the subspace of curves in $V_{k+1}$ passing through $\pi_P(P_{k+1})$ has dimension
$\dim V_{k+1}-1$.

If $\dim_{\mathbb C} [I(\pi_P(Z_{k+1}))]_d=k+2$, then requiring vanishing at the additional point
$\pi_P(P_{k+1})$ would impose (for a general choice) one independent linear condition, giving
\[
\dim [I(\pi_P(Z_k))]_d \le \dim [I(\pi_P(Z_{k+1}))]_d - 1 = k+1,
\]
contradicting the inductive hypothesis.
Therefore $\dim [I(\pi_P(Z_{k+1}))]_d\ge k+3$, proving~\eqref{eq:dimstep} for $k+1$.

In particular, the set $Z_{N-1}$ consists of a single point, $P_N$,
and the inequality~\eqref{eq:dimstep} gives
\[
\dim [I(\pi_P(Z_{N-1}))]_d \ge N+1 = \binom{d+2}{2}+1.
\]
But $[I(\pi_P(Z_{N-1}))]_d \subset [R_H]_d$ and $\dim [R_H]_d = N=\binom{d+2}{2}$,
so this is a contradiction.
\end{proof}

We now apply Theorem~\ref{t. not a pencil} to the case of general sets of $\binom{d+2}{2} \pm 1$ points in $\PP^3$. 


\subsection{$d$-Weddle locus for $\binom{d+2}{2} +1$ general points}\label{sec:+1}

Let $Z$ be a set of $N+1=\binom{d+2}{2}+1$ general points in $\PP^3$, $d\ge 2$. 

Since $Z$ is general, it imposes independent conditions on forms of degree $d$, hence
\[
\dim [I(Z)]_d
= \binom{d+3}{3}-(N+1)
= \binom{d+2}{3}-1.
\]
We are interested in the locus of points $P\in\PP^3\setminus Z$ for which there exists a degree $d$ cone with vertex $P$
containing $Z$, equivalently for which
\[
\dim [I(Z)\cap I(P)^d]_d \ge 1.
\]
Using the Macaulay duality exact sequence~\eqref{OrigMacDualSeqPrime}, this condition is detected by the failure of maximal rank of a
$\bigl(\binom{d+2}{3}-1\bigr)\times \binom{d+2}{3}$ matrix of linear forms in the coordinates of $P$.
Consequently, the $d$-Weddle scheme is defined by the maximal minors of this matrix; when these minors can cut out a curve (codimension $2$), Lemma~\ref{J1 lemma} applies and forces both the degree and the arithmetically Cohen--Macaulay property.

Emch considers this situation in \cite[page 278]{EMCH} and proposes a closed formula for the degree of the resulting $d$-Weddle locus:
\[
\frac{1}{72} ( 2d^6 + 12 d^5 + 17 d^4 - 66 d^3 - 271 d^2 + 954d - 648).
\] 
His formula gives the correct value in some small cases (for instance, when $d=2$ it yields degree $6$), but it is not correct in general. Already for $d=3$ (so $N+1=\binom{5}{2}+1=11$ general points) it predicts a curve of degree $52$; instead, our formula in Proposition \ref{binom d+2 2 + 1} shows that the $3$-Weddle locus is a curve of degree $45$, and this agrees with computations in symbolic algebra software.

We now explain the general pattern and record the correct degree formula. Consider the Macaulay duality exact sequence~\eqref{OrigMacDualSeqPrime}:
{\footnotesize\[
\left[\frac{R}{(L_{P_1}^{d},\ldots,L_{P_{N+1}}^{d})}\right]_{d-1}
\stackrel{\times L_P}{\longrightarrow}
\left[\frac{R}{(L_{P_1}^{d},\ldots,L_{P_{N+1}}^{d})}\right]_{d}
\longrightarrow
\left[\frac{R}{(L_{P_1}^{d},\ldots,L_{P_{N+1}}^{d},L_P)}\right]_{d}
\to 0,
\]
}where $L_P$ is linear form corresponding to the point $P\in\PP^3$.
The first vector space has dimension
$\binom{d+2}{3},$
while the second has dimension
\[
\dim \left[\frac{R}{(L_{P_1}^{d},\ldots,L_{P_{N+1}}^{d})}\right]_{d}
=\binom{d+3}{3}-(N+1)=\binom{d+2}{3}-1.
\]
Thus multiplication by $L_P$ is represented (after choosing bases) by a
\[
\left(\binom{d+2}{3}-1\right)\times \binom{d+2}{3}
\]
matrix of linear forms, and the $d$-Weddle scheme is cut out by its maximal minors. Because of the size of the matrix, the maximal minors define a locus of codimension one or two.
In particular, the expected codimension is $2$, and when this expected codimension occurs, Lemma~\ref{J1 lemma} determines the degree and implies the scheme is ACM.

For $d=2$ this reproduces the fact that the $2$-Weddle locus of seven general points is a curve of degree $6$ (indeed ACM of genus $3$, cf.\ Proposition~\ref{seven pts} and the discussion below).
For $d=3$ we have a $9\times 10$ matrix of linear forms. The $3$-Weddle surface for any subset of $10$ of the $11$ points has degree $10$, and two such surfaces arising from two different $10$-subsets share no component by Theorem~\ref{t. not a pencil} and generality. Hence the $3$-Weddle scheme for $11$ general points has codimension at least $2$, hence exactly $2$. So, by Lemma~\ref{J1 lemma}, it is an ACM curve of degree
$\binom{10}{2}=45.$

By the same reasoning, we have the following result.

\begin{proposition}\label{binom d+2 2 + 1} 
The $d$-Weddle locus for a general set of $\binom{d+2}{2}+1$ points in $\PP^3$ is an ACM curve of degree
\[
\binom{ \binom{d+2}{3} }{2} = \frac{1}{72} d^6 + \frac{1}{12} d^5 + \frac{13}{72} d^4 + \frac{1}{12} d^3 - \frac{7}{36} d^2 - \frac{1}{6} d.
\]
\end{proposition}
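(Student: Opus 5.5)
We follow the template of the $d=3$ discussion above. Set $q=\binom{d+2}{3}$ and $N=\binom{d+2}{2}$, and let $Z=\{P_1,\dots,P_{N+1}\}$ be general. By the exact sequence \eqref{OrigMacDualSeqPrime} and the dimension count already carried out, the $d$-Weddle scheme of $Z$ is cut out by the maximal minors of a $(q-1)\times q$ matrix $A$ of linear forms on $\PP^3$. For a general vertex $P$ we have $[I(Z)\cap I(P)^d]_d=0$, because $N+1$ general projected points impose independent conditions on plane curves of degree $d$. Hence $\times L_P$ is injective for general $P$, so the generic rank is $q-1$ and the ideal $I_{q-1}(A)$ is nonzero.

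\textbf{Step 1: the locus is nonempty and has codimension at most $2$.} We will apply the bound quoted before Lemma~\ref{J1 lemma} with $p=q$, $r=q-1$, and $q-1$ rows. It says that the locus $Y_{q-2}$, if nonempty, has codimension at most $(p-r)(q-1-r)=1\cdot 2$. For nonemptiness we use the $q$ maximal minors: each is a form of degree $q-1\ge 1$, and $q$ forms always have a common zero in $\PP^3$ once we show the locus has codimension $\ge 2$. Alternatively, nonemptiness follows directly because the codimension-$2$ intersection of two Weddle surfaces, constructed in Step 2, is nonempty.

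\textbf{Step 2: the codimension is at least $2$.} For each $i$, the set $Z_i=Z\setminus\{P_i\}$ is a general set of $N$ points. By Theorem~\ref{WeddleSchemeSigmaThm} with $n=2$, its $d$-Weddle scheme $\Sigma_i$ is a surface of degree $q$. Any $P$ in the Weddle locus of $Z$ is the vertex of a degree $d$ cone through $Z\supset Z_i$, so $P\in\Sigma_i$ for every $i$. The Weddle locus of $Z$ is therefore contained in $\Sigma_1\cap\Sigma_2$. We claim that $\Sigma_1$ and $\Sigma_2$ share no component. Suppose $\Sigma_0$ were a common component. Then, for a general $P\in\Sigma_0$, both $\pi_P(Z_1)$ and $\pi_P(Z_2)$ lie on curves of degree $d$. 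Theorem~\ref{t. not a pencil} applied to $Z_1$ shows that $\pi_P(Z_1)$ lies on a unique such curve $C$. Now $\pi_P(Z_1)$ and $\pi_P(Z_2)$ share the $N-1$ points $\pi_P(Z\setminus\{P_1,P_2\})$. We then need to conclude that the curve through $\pi_P(Z_2)$ also equals $C$, which forces $\pi_P(P_1)\in C$.

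\textbf{Main obstacle.} The hard step is to derive a contradiction from $\pi_P(P_1)\in C$ using generality; this requires care, because $\Sigma_0$ depends on $P_1$. Our plan is a monodromy and dimension argument. Fix $Z\setminus\{P_1\}$ and vary $P_1$ in $\PP^3$. The surface $\Sigma_1$ does not move, while $\Sigma_2$ moves with $P_1$. A fixed component of $\Sigma_1$ would then have to lie in $\Sigma_2$ for all choices of $P_1$. That would mean that for general $P\in\Sigma_0$ and every $P_1$, the point $\pi_P(P_1)$ lies on a degree $d$ curve through $\pi_P(Z\setminus\{P_1,P_2\})$ together with an extra unique-curve condition. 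Equivalently, $\pi_P(Z\setminus\{P_1,P_2\})$ would lie on a pencil of degree $d$ curves, for general $P\in\Sigma_0$. We then rule this out by the inductive point-removal argument from the proof of Theorem~\ref{t. not a pencil}, run starting from a pencil on $N-1$ points; at this point some genuine dimension bookkeeping is needed.

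Granting Step 2, the scheme has the expected codimension $2$. Lemma~\ref{J1 lemma}, with $p=q$, $r=q-1$ and $q-1$ rows, then gives that it is ACM of degree $\binom{q}{q-1}\cdot\binom{q}{q-1}/\binom{q-1}{q-1}\binom{q}{q-1}$; evaluating the product formula yields $\binom{q}{2}$. Identifying the locus with the scheme in the statement also needs generic reducedness; this follows from the fact that a determinantal curve of expected codimension is ACM, hence locally Cohen--Macaulay with no embedded points, combined with a check of reducedness at one point (for example, by a specialization argument). The polynomial expression is then obtained by expanding $\binom{q}{2}=\tfrac12 q(q-1)$ with $q=d(d+1)(d+2)/6$.
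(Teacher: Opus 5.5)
Your skeleton matches the paper's: maximal minors of the $(q-1)\times q$ matrix, containment of the locus in the Weddle surfaces of $N$-subsets, codimension $\ge 2$ from Theorem~\ref{t. not a pencil} plus generality, then Lemma~\ref{J1 lemma}. But Step~2, which carries all the content, is left open, and the route you set up for it leads into an open problem. The ``main obstacle'' you name is not the real one. $\Sigma_0$ is a component of $\Sigma_1$, which depends only on $Z\setminus\{P_1\}$, so $\pi_P(P_1)\in C$ along all of $\Sigma_0$ is excluded by a dimension count (below). The real gap is the sentence before it. The curve through $\pi_P(Z_2)$ is forced to be $C$ only if the $N-1$ common points $\pi_P(W)$, $W=Z\setminus\{P_1,P_2\}$, lie on a \emph{unique} degree $d$ curve. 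If they lie on a pencil, the curves through $\pi_P(Z_1)$ and $\pi_P(Z_2)$ are in general different members of it. Your monodromy paragraph shows that a common component forces $\pi_P(W)$ onto a pencil for general $P\in\Sigma_0$. Conversely, any surface in the $d$-Weddle locus of $W$ lies in both $\Sigma_1$ and $\Sigma_2$. So ``$\Sigma_1$ and $\Sigma_2$ share no component'' implies the codimension-two Conjecture for $\binom{d+2}{2}-1$ general points, which the paper leaves open (checked only for $d\le 4$). Rerunning the point-removal induction does not help. The property it needs at each step is that the removed projected point is not a base point of the system through the remaining ones. That holds at every step exactly when $\pi_P(W)$ imposes independent conditions, i.e.\ exactly when what you want to prove is already true.

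The missing idea is to drop $\Sigma_2$ and use only that $P_1$ is general with respect to $Z_1=Z\setminus\{P_1\}$. The locus $\mathcal W(Z)$ lies in $\Sigma_1$; fix a component $\Sigma_0$ of $\Sigma_1$. By Theorem~\ref{t. not a pencil} there is a dense open $U\subset\Sigma_0$ on which $\pi_P(Z_1)$ lies on a unique curve $C_P$. For $P\in U\setminus Z$, $P\in\mathcal W(Z)$ if and only if $\pi_P(P_1)\in C_P$, i.e.\ $P_1$ lies on the cone over $C_P$ with vertex $P$. This incidence in $U\times\PP^3$ has dimension $2+2=4$, so its fibre over a general $P_1$ has dimension at most $1$. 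Since $\Sigma_0\setminus U$ is at most a curve and $\Sigma_1$ has finitely many components, $\dim\mathcal W(Z)\le 1$. This is the ``generality'' the paper appeals to, written out in the proof of Proposition~\ref{general set plus 2}; the two-surface phrasing, read literally, asks for more than is needed.

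Smaller points:
\begin{itemize}
\item $\times L_P$ goes from a $q$-dimensional to a $(q-1)$-dimensional space, so generically it is surjective, not injective.
\item Both of your nonemptiness arguments fail: $q\ge 4$ forms on $\PP^3$ need not have a common zero, and $\mathcal W(Z)$ is only \emph{contained} in $\Sigma_1\cap\Sigma_2$. Nonemptiness is automatic, though: the ideal of maximal minors is a proper homogeneous ideal of height at most $2$ in four variables (Eagon--Northcott), so it cuts out a set of dimension at least $1$ in $\PP^3$.
\item In Lemma~\ref{J1 lemma} the index is $r=q-2$, giving codimension $2\cdot 1$ and degree $\binom{q}{q-2}=\binom{q}{2}$.
\item A reducedness check at a single point does not give generic reducedness along every component of a possibly reducible curve. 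The paper's argument, likewise, really concerns the Weddle scheme rather than the locus.
\end{itemize}
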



\subsection{$d$-Weddle locus for $\binom{d+2}{2} -1$ general points}

Consider the analogous question for  $Z=\{P_1,\ldots,P_{N-1}\},$ a set of $N-1=\binom{d+2}{2}-1$ general points in $\PP^3$.

 It is obvious that the general projection lies on a plane curve of degree $d$, so the question is to find the locus where the projection lies on a {\it pencil} of curves of degree $d$, i.e., for which $P$ we have $$\dim [I(\pi_P(Z))]_d \geq 2.$$ 

When $d=2$, i.e., $|Z|=5$, it is easy to see that  the 2-Weddle {\it locus} is precisely the union of lines joining two of the points, so the locus is a curve of degree $\binom{5}{2} = 10$. Then applying Proposition \ref{binom -1} we get that this is also the 2-Weddle scheme for the 5 points.

When $d\ge3$, the union of lines joining two points is  no longer  the full $d$-Weddle scheme of  $Z$. 
However, it is still contained in the
$d$-Weddle locus. Indeed, for example for $N-1$ general points, projecting from a point on a line joining two points of $Z$ gives $N-2$ points of $\PP^2$, which certainly lie on a pencil of curves of degree $d$.

In particular for $d=3$, i.e., $|Z|=9$, there are 36 such lines which form a reduced proper subscheme of the curve of degree 55 obtained in Proposition \ref{binom -1}.

\begin{proposition}\label{binom -1}
Let $Z$ be a set of $\binom{d+2}{2}-1$ general points in $\PP^3$. The expected codimension of the $d$-Weddle scheme is 2. If the $d$-Weddle scheme of $Z$ has the expected codimension then it is an ACM curve of degree
\[
\binom{\binom{d+2}{3}+1}{2}.
\]
This curve includes the $\displaystyle \binom{\binom{d+2}{2}-1}{2}$ lines joining two points of $Z$. In particular, it is never irreducible.
\end{proposition}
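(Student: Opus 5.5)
Set $N=\binom{d+2}{2}$ and $q=\binom{d+2}{3}$. The plan is to run the Macaulay duality sequence \eqref{OrigMacDualSeqPrime} exactly as in the proof of Proposition~\ref{binom d+2 2 + 1}, with $N-1$ general points $Z$. The source $[R/(L_{P_1}^d,\dots,L_{P_{N-1}}^d)]_{d-1}=[R]_{d-1}$ has dimension $q$. Since general points impose independent conditions in degree $d$, the target has dimension $\binom{d+3}{3}-(N-1)=q+1$. Hence $\times L_Q$ is represented by a $(q+1)\times q$ matrix $A$ of linear forms in the coordinates of $Q$.

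For general $P$ the cokernel has dimension $1$, since $\pi_P(Z)$ is a general set of $N-1$ points and lies on exactly one degree $d$ curve. So the generic rank is $q$, and the $d$-Weddle scheme is cut out by the ideal $I_q(A)$ of maximal minors; this ideal is nonzero. In the notation before Lemma~\ref{J1 lemma}, take $p=q$ columns, $q+1$ rows and $r=q-1$. The expected codimension is then $(p-r)(q+1-r)=1\cdot 2=2$, which proves the first assertion.

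Assuming this codimension is attained, Lemma~\ref{J1 lemma} gives that the scheme is ACM. Its degree is the single factor $i=0$ of the product formula:
\[
\binom{q+1}{q-1}\Big/\binom{q-1}{q-1}=\binom{q+1}{2}=\binom{\binom{d+2}{3}+1}{2}.
\]
This is the Eagon--Northcott/Porteous count for maximal minors of a $(q+1)\times q$ linear matrix. I will check the index bookkeeping of Lemma~\ref{J1 lemma} carefully, since the roles of $p$ and $q$ there must be matched with rows and columns. Under either reading of the formula the answer is $\binom{q+1}{2}$.

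Next I show that each line $\langle P_i,P_j\rangle$ lies in the scheme. For $Q$ on such a line, $\pi_Q(Z)$ consists of at most $N-2$ points, which lie on a pencil of degree $d$ curves. So $\dim\coker(\times L_Q)\ge 2$, the rank drops below $q$, and all maximal minors vanish at $Q$. The lines are distinct for general $Z$, and there are $\binom{N-1}{2}$ of them, each of degree $1$. Hence they are components of a curve of degree $\binom{q+1}{2}$.

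For non-irreducibility it is enough to compare degrees: $\binom{q+1}{2}>1$ (in fact more than one line occurs, since $N-1\ge 5$). The curve therefore contains at least two distinct line components and is reducible. This settles irreducibility even when $d=2$, where $\binom{5}{2}=10=\binom{5}{2}$ and the curve is exactly the union of lines.

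The main subtlety is the hypothesis itself. Without knowing the expected codimension we cannot invoke Lemma~\ref{J1 lemma}, but the statement is conditional, so no extra work is needed there. One small point should be verified: the scheme is the saturation of $I_q(A)$, and lines in its support are genuine (possibly nonreduced) components. Reducibility still holds on the level of supports, so this does not affect the conclusion.
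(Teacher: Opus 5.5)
Your proposal is correct and follows essentially the same route as the paper. The paper uses the sequence \eqref{OrigMacDualSeqPrime} to get a $\bigl(\binom{d+2}{3}+1\bigr)\times\binom{d+2}{3}$ matrix of linear forms. Under the codimension-$2$ hypothesis it applies Lemma~\ref{J1 lemma} for the ACM property and the degree $\binom{\binom{d+2}{3}+1}{2}$, and it notes that the secant lines lie in the locus because projecting from them leaves at most $\binom{d+2}{2}-2$ points, which lie on a pencil. You also spell out steps the paper leaves implicit, and they all check out: the generic rank is full, so the maximal minors are nonzero; the expected codimension is $(p-r)(q-r)=2$; and reducibility follows from having at least two distinct line components, not from the degree alone.
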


\begin{proof}
We again set $N = \binom{d+2}{2}$. 
By \eqref{OrigMacDualSeqPrime} we have the exact sequence
\[
[R/(L_1^d, \dots, L_{N-1}^d)]_{d-1} \stackrel{\times \ell}{\longrightarrow} [R/(L_1^d, \dots, L_{N-1}^d)]_d \rightarrow [R/(L_1^d,\dots,L_{N-1}^d, \ell)]_d \rightarrow 0.
\]
The first vector space has dimension $\binom{d+2}{3}$, and the second vector space has dimension $\binom{d+2}{3}+1$. Thus we have a $[\binom{d+2}{3}+1 ] \times \binom{d+2}{3}$ matrix of linear forms. By assumption, the ideal of maximal minors defines a scheme of codimension 2 in $\PP^3$, so 
Lemma \ref{J1 lemma} gives the ACMness and the degree of the $d$-Weddle scheme. 
The fact that it includes the lines joining two points of $Z$ is obvious, as mentioned above. 
\end{proof}

\begin{conjecture}
A set of $\binom{d+2}{2}-1$ general points in $\PP^3$ has $d$-Weddle scheme of the expected codimension, namely 2.
\end{conjecture}
\begin{remark}
    We believe that the conjecture is true. In particular, it holds for $d=2, 3, 4$. It is not hard to check the case $d=2$. The other two cases were verified by computer using random points, and the result holds for general points by semicontinuity.  
\end{remark}


\subsection{$d$-Weddle locus for $\binom{d+2}{2} \pm 2$ general points}
We first consider the case of $\binom{d+2}{2}+2$ general points in $\PP^3$, where, applying the previous section, one expects only finitely many
points of projection from which the image lies on a degree $d$ plane curve.

\begin{proposition}\label{general set plus 2}

The $d$-Weddle locus for a general set, $Z$, of $\binom{d+2}{2} +2$ points in $\PP^3$ is a zero-dimensional scheme of degree $\displaystyle \binom{\binom{d+2}{3}}{3}$.

\end{proposition}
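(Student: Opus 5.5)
The plan follows the pattern of Proposition \ref{binom d+2 2 + 1}. Set $N=\binom{d+2}{2}$ and $q=\binom{d+2}{3}$.

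\textbf{The matrix.} Since $Z$ consists of $N+2$ general points, it imposes independent conditions on forms of degree $d$ (note $N+2\le\binom{d+3}{3}$). Hence in the sequence \eqref{OrigMacDualSeqPrime} the source $[R/(L_{P_1}^d,\dots,L_{P_{N+2}}^d)]_{d-1}$ has dimension $q$, and the target has dimension $\binom{d+3}{3}-N-2=q-2$. Multiplication by $L_P$ is therefore a $(q-2)\times q$ matrix $T'$ of linear forms in the coordinates of $P$.

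For general $P$ the map is surjective, because a general projection of $N+2$ general points imposes independent conditions in degree $d$. So $\tau(Z,d)=q-2$, and the $d$-Weddle scheme is cut out by the maximal minors $I_{q-2}(T')$. The expected codimension is $(q-(q-2)+1)\cdot 1=3$. Once this codimension is achieved, Lemma \ref{J1 lemma} with $p=q$, the other dimension $q-2$ and $r=q-3$ gives the degree
\[
\prod_{i=0}^{2}\binom{q-2+i}{q-3}\Big/\binom{q-3+i}{q-3}=(q-2)\cdot\frac{q-1}{2}\cdot\frac{q}{3}=\binom{q}{3}.
\]

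\textbf{Codimension 3.} This is the main step. The codimension is at most $3$ by the Eagon--Northcott bound. To show it is at least $3$, I would argue inclusion-exclusion style, as in the $d=3$, $11$-point argument. For each subset $Z'\subset Z$ of $N+1$ points, the $d$-Weddle locus of $Z$ is contained in that of $Z'$, since a cone through $Z$ passes through $Z'$. By Proposition \ref{binom d+2 2 + 1} the latter is an ACM curve $C_{Z'}$, granting that the codimension-two statement holds as shown there.

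It then suffices to show that two such curves $C_{Z'}$ and $C_{Z''}$ share no component. Their intersection is then finite, and so is the Weddle locus of $Z$. For a common component $\Gamma$, a general point $P\in\Gamma$ would admit cones of degree $d$ through $Z'$ and through $Z''$, both lying in the Weddle locus of $Z'\cap Z''$, which consists of $N$ points.

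I would deduce a contradiction from generality plus a dimension count. By Theorem \ref{t. not a pencil}, at a general point of the Weddle surface $\Sigma$ of the $N$ points there is a unique cone. So along $\Gamma$ the two cones coincide, and this single cone passes through both extra points. Thus $\Gamma$ would lie in the Weddle locus of all of $Z$, a curve of cones through $N+2$ points. Now fix the $N$ points and the one-dimensional family of unique cones parametrized by a general curve in $\Sigma$. Requiring the cone to pass through two additional general points imposes two independent conditions on a one-parameter family, which leaves finitely many points. This is the step I expect to be delicate: one must make sure the parameter count applies to every component rather than only a general curve. I would first try to do this by varying the two extra points independently over $\PP^3$, using an incidence correspondence $\{(P,Q_1,Q_2): Q_i\in \text{cone}_P\}$, which has dimension $2+2+2=6$ over $\Sigma$, against the $6$-dimensional space of pairs. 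This gives finiteness of fibers over general $(Q_1,Q_2)$.

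\textbf{Scheme versus locus.} With codimension $3$ established, the maximal-minor scheme is zero-dimensional of degree $\binom{q}{3}$. For the statement about the locus, I would either read the proposition as concerning this scheme, or add that for general $Z$ the points are reduced. The latter would be shown by exhibiting one configuration with $\binom{q}{3}$ distinct points (e.g.\ computationally for small $d$), and otherwise simply stating the degree of the scheme.
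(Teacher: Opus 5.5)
Your matrix set-up, the expected codimension $3$, the degree $\binom{q}{3}$ from Lemma \ref{J1 lemma}, and the reduction to excluding curve components via the curves $C_{Z'}$ of Proposition \ref{binom d+2 2 + 1} all match the paper. The gap is in how you exclude curve components.

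You use Theorem \ref{t. not a pencil} to conclude that, along a common component $\Gamma$ of $C_{Z'}$ and $C_{Z''}$, the cone through $Z_0=Z'\cap Z''$ is unique. That theorem only concerns a general point of each component of the surface $\Sigma(Z_0)$. In other words, it only says that the locus where $\dim[I(\pi_P(Z_0))]_d\ge 2$ has dimension at most one. $\Gamma$ is a special curve on $\Sigma(Z_0)$, and it could lie inside that locus.

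This is not a removable technicality for your route. Any curve along which $Z_0$ has a pencil of cones automatically lies in both $C_{Z'}$ and $C_{Z''}$, since some member of a pencil passes through any one further point. So your claim that $C_{Z'}$ and $C_{Z''}$ share no component cannot hold unless the pencil locus of $N$ general points is finite, and nothing in the paper gives you that. Your incidence count $2+2+2=6$ has the same hidden assumption: it takes the fibre over every $P\in\Sigma$ to be a single surface $K_P$. Note also that after the two-subset detour you are back to showing that the Weddle locus of $Z$ contains no curve, which is the original claim; only the incidence count does any real work.

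The paper avoids this by using one subset $Z'$ of $N+1$ points and the single remaining point $Q$.
\begin{itemize}
\item The Weddle locus of $Z$ lies in $C_{Z'}$.
\item On a dense open subset $C_0^\circ$ of each component $C_0$ of $C_{Z'}$ there is a unique cone $K_P$ through $Z'$; the paper cites Theorem \ref{t. not a pencil} for this. This input needs no control of the pencil locus of $Z_0$: along a curve where $Z_0$ has a pencil of cones, the cone through $Z_0\cup\{Q_1\}$ is still generically unique.
\item Since $C_{Z'}$ depends only on $Z'$, a general $Q$ lies outside the proper closed set $\bigcap_{P\in C_0^\circ}K_P$. Equivalently, $\{(P,Q): P\in C_0^\circ,\ Q\in K_P\}$ is $3$-dimensional, so its general fibre over $\PP^3$ is finite.
\item Hence only finitely many $P\in C_0$ carry a cone through all of $Z$.
\end{itemize}

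If you want to keep your two-point incidence over $\Sigma(Z_0)$, you must stratify by $\dim[I(\pi_P(Z_0))]_d$. The pencil stratum, of dimension at most one, is harmless: passing through $Q_1,Q_2$ is then a single determinantal condition, which again gives a $6$-dimensional incidence. The net stratum is not: every point of it lies in the Weddle locus of $Z$, so you would also need that stratum to be finite.

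A minor point: $N+2$ points cannot impose independent conditions on the $N$-dimensional space of plane curves of degree $d$. What you actually need, and what is true, is only that no such curve passes through a general projection.
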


\begin{proof}
Set $N=\binom{d+2}{2}$, so $|Z|=N+2$.
 The sequence \eqref{OrigMacDualSeqPrime} gives
\[
[R/(L_1^d, \dots, L_{N+2}^d)]_{d-1} \stackrel{\times \ell}{\longrightarrow} [R/(L_1^d, \dots, L_{N+2}^d)]_d \rightarrow [R/(L_1^d,\dots,L_{N+2}^d, \ell)]_d \rightarrow 0.
\]
Now the first vector space has dimension $q=\binom{d+2}{3}$ and the second has dimension 
\[
\binom{d+3}{3}-(N+2) = \binom{d+3}{3}-\binom{d+2}{2}-2 = \binom{d+2}{3}-2=q-2.
\]

Thus multiplication by $\ell$ is represented by a $(q-2)\times q$ matrix of linear forms on $\PP^3$.
The ideal of maximal minors therefore, by Lemma \ref{J1 lemma}, has expected codimension 3. If this is the correct codimension, then the degree (using again Lemma \ref{J1 lemma}) is the one claimed by the statement.

It remains to justify that the locus cannot have positive-dimensional components. Let $Z'\subset Z$ be any subset of $N+1$ points. 
By Proposition \ref{binom d+2 2 + 1},  the $d$-Weddle locus of  $Z'$ is an ACM curve, $C$; and by Theorem~\ref{t. not a pencil} a general point of any component of $C$ projects $Z'$ onto a set lying on a unique degree $d$ plane curve. Adding one further general point to $Z'$ to obtain $Z$ forces the projected additional point to miss that curve, so the same projection point cannot lie in the $d$-Weddle locus of $Z$.
Hence the $d$-Weddle locus of $Z$ is zero-dimensional, and the degree is as claimed.
\end{proof}

Now consider instead a general set of $N-2=\binom{d+2}{2}-2$ points in $\PP^3$.
The general projection lies on a pencil of plane curves of degree $d$, so the $d$-Weddle locus is the set of points in $\PP^3$ from which the projection lies on a 2 (projective) dimensional linear system. In the exact sequence \eqref{OrigMacDualSeqPrime} above, the second vector space now has dimension $\binom{d+2}{3}+2$. Again the dimensions differ by 2, so as in Lemma \ref{J1 lemma} we would again expect a zero-dimensional $d$-Weddle locus. However, this is not the case.

\begin{proposition}\label{general set minus 2}
Let $Z$ be a general set of $N - 2$ points in $\PP^3$. Then the $d$-Weddle locus does not have the expected codimension.
\end{proposition}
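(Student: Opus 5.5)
The expected codimension here is $3$. The matrix of $\times L_P$ in \eqref{OrigMacDualSeqPrime} is a $(q+2)\times q$ matrix of linear forms, with $q=\binom{d+2}{3}$. The $d$-Weddle locus is where its rank drops below $q$, i.e.\ where its maximal minors vanish, and Lemma~\ref{J1 lemma} gives expected codimension $(q+2-q+1)(1)=3$. So it suffices to exhibit a \emph{positive-dimensional} family of centers $P$ for which $\dim[I(\pi_P(Z))]_d\ge 3$. Equivalently, we want projections of $Z$ lying on a net of degree $d$ curves rather than just the generic pencil.

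The natural candidates are the secant lines. Take $P$ a general point of the line $\langle P_i,P_j\rangle$ joining two points of $Z$. Then $\pi_P(P_i)=\pi_P(P_j)$, so $\pi_P(Z)$ consists of at most $N-3$ points of $H\cong\PP^2$. These points impose at most $N-3$ conditions on the $N$-dimensional space $[R_H]_d$. Hence $\dim[I(\pi_P(Z))]_d\ge 3$, which exceeds the generic value $\delta(Z,d,d)$.

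To confirm that the generic value is indeed $2$, note first that for general $P$ the projection $\pi_P(Z)$ is a general set of $N-2$ points in $\PP^2$. Such a set imposes independent conditions on degree $d$ curves, so $\dim[I(\pi_P(Z))]_d=2$. This also matches the cokernel dimension $(q+2)-q=2$ when the map has maximal rank, which happens for general $P$ since $Z$ is general. Thus every secant line $\langle P_i,P_j\rangle$ lies in the $d$-Weddle locus, and the locus contains the $\binom{N-2}{2}$ lines joining pairs of points of $Z$. It therefore has a component of dimension at least $1$, i.e.\ codimension at most $2<3$.

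The only point needing care is the definition of the locus as the closure of points $P\notin Z$ where $\delta$ jumps. A general point of a secant line is not in $Z$, and we have shown the jump occurs there, so the whole line lies in the closure. No substantial obstacle remains beyond this bookkeeping. If desired, one can also observe (paralleling Proposition~\ref{binom -1}) that the locus contains further curves, but the secant lines alone already rule out the expected codimension.
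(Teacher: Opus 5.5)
Your proposal is correct and is essentially the paper's own proof: the paper simply observes that the $\binom{N-2}{2}$ secant lines of $Z$ lie in the $d$-Weddle locus, while the expected codimension coming from the $(q+2)\times q$ matrix and Lemma~\ref{J1 lemma} is $3$. You additionally spell out the dimension counts the paper calls ``clear'': the generic value is $2$, it jumps to at least $3$ on a secant line, and this uses the tacit assumption $d\ge 2$ so that $Z$ has at least two points.
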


\begin{proof}
It is clear that the 1-skeleton of the $N-2$ points (i.e., the lines through pairs of the points) is contained in the locus. Since we expect codimension 3, we are done. 
\end{proof}

Something unusual happens in the case of 8 general points. We illustrate it with a careful analysis.


\subsection{The curious behavior of 8 general points}
Let $Z$ be a general set of 8 points in $\PP^3$. Since $8=\binom{2+2}{2}+2=\binom{3+2}{2}-2,$ the set $Z$ simultaneously falls into both the cases discussed above for $d=2$ and for $d=3$, respectively. This leads to a mixed behavior.

 For a general projection $\pi_P$, it is clear that the image $\pi_P(Z)$ lies on a pencil of cubics and does not lie on any conics.
\begin{itemize}
    \item { The $2$-Weddle scheme. } The linear system of quadrics through $Z$ has dimension
\[
\dim [I(Z)]_2=\binom{2+3}{3}-8=10-8=2,
\]
so it is a pencil. A point $P\in\PP^3$ lies in the $2$-Weddle locus precisely when the projection $\pi_P(Z)$ lies on a conic,
equivalently when there exists a quadric cone containing $Z$ with vertex $P$.
Thus the $2$-Weddle locus is the set of vertices of the singular quadrics in the pencil. Singular quadrics form a degree $4$ hypersurface in the $\PP^9$ parametrizing all quadrics in $\PP^3$
(the discriminant hypersurface). Since the pencil determined by a general $Z$ is a general line in $\PP^9$,
it meets the discriminant in $4$ reduced points. Hence the pencil contains exactly four quadric cones,
and therefore the $2$-Weddle locus consists of four distinct points (the corresponding vertices).
In particular, the $2$-Weddle scheme is a zero-dimensional scheme of degree $4$ supported on these four points; this agrees with Proposition \ref{general set plus 2}.

For general $Z$, these four vertices are disjoint from the $28$ lines joining pairs of points of $Z$.
Indeed, the condition that a vertex lie on a fixed secant line $\langle P_i,P_j\rangle$ is a closed incidence condition,
and a general choice of $Z$ avoids it. 
Equivalently, for random choices of $Z$ one checks by computer that no vertex lies on any of the $28$ secants; this persists for general $Z$ by semicontinuity.

\item { The $3$-Weddle scheme. } Note that the 28 lines forming the 1-skeleton of $Z$ are contained in its 3-Weddle locus, so
(as noted in Proposition \ref{general set minus 2}) the $3$-Weddle scheme of $Z$ does not have the expected codimension.  

Notice that any point in the 2-Weddle locus of $Z$ must also lie in the 3-Weddle locus of $Z$. Indeed, if $P$ is a point on the 2-Weddle locus of $Z$, the projection of $Z$ from $P$ is a set of 8 distinct points on a conic.  By generality, no projection of $Z$ contains 4 or more collinear points so the eight points cannot be contained in two lines, hence
this conic must be smooth. 
Thus the projection $\pi_P(Z)$ must in fact be a complete intersection, and $\dim [I({\pi_P (Z)})]_3 = 3$ so $P$ lies in the $3$-Weddle locus. Thus the 3-Weddle locus has a curve component of degree 28 plus a zero-dimensional scheme of degree 4, so it is not unmixed. 
Our computations indicate that the 3-Weddle locus contains nothing else.
\end{itemize}

The next example shows that even for 8 points in LGP, the 2-Weddle scheme can be nonreduced of degree 4.

\begin{example} \label{nonred sup on pt}
Let $C\subset \PP^3$ be a twisted cubic. Choose seven points
$P_1,\ldots,P_7\in C$, and let $P_8$ be a general point on the
tangent line $\lambda=T_{P_7}C$ at $P_7$. Set
\[
Z=\{P_1,\ldots,P_8\}\subset \PP^3.
\]

It is easy to check that the set $Z$ is in LGP. For the same reason as above (namely, the quadrics through $Z$ form a pencil and a pencil contains
four singular members counted with multiplicity), the $2$-Weddle scheme is $0$-dimensional of degree $4$.

Let $P$ be a point of projection in the $2$-Weddle locus of $Z$. 

By Proposition \ref{seven pts}, $P$ cannot lie on a line joining two points of $\{ P_1,\dots,P_7 \}$. 

Moreover, if $P\notin C$ then $\pi_P(C)$ is a plane cubic and $\pi_P(Z)$ consists of 7 distinct points; therefore $\pi_P(P_1),\ldots,\pi_P(P_7)$ cannot all lie on a conic. 

Thus necessarily $P\in C$. In order for the eighth point to lie on the same conic, we must also have that
$\pi_P(P_8)$ lies on $\pi_P(C)$.
But $P_8\in \lambda=T_{P_7}C$, so $\pi_P(P_8)\in \pi_P(C)$ holds if and only if  $P$ lies on the same line $\lambda$ . Hence any such $P$ must lie in
\[
C\cap \lambda=\{P_7\}.
\]
Therefore the $2$-Weddle locus of $Z$ is the single point $P_7$.

Since the $2$-Weddle scheme has degree $4$ but its support is the single point $P_7$,
it follows that the $2$-Weddle scheme is nonreduced and supported at $P_7$.

\end{example}

\section{Connections to Lefschetz Properties}\label{sec:LefschetzConnections}

In this section we record some observations relating the $d$-Weddle scheme to objects
arising in the study of Lefschetz properties. This connection is motivated in particular by Subsection~\ref{not max minors}.

Given a graded artinian ring $A=\oplus_{i\geq0} A_i$ (such as $A=R/(L_1^d,\dots,L_r^d)$ for linear forms $L_j$ spanning 
$[R]_1$), recall that $A$ has the {\it Weak Lefschetz Property} (WLP) 
if there is a linear form $L$ such that the map $A_t\to A_{t+1}$ given by
multiplication $\times L$ has  maximal rank, for every $t$.
And $A$ has the {\it Strong Lefschetz Property} (SLP) if there is a linear form $L$ for which the map $A_t\to A_{t+s}$ given by
multiplication $\times L^s$ has maximal rank, for all $t$ and $s$; see \cite{MN-Tour}. Either of these is an open condition, by semicontinuity, so to show that either of these properties fails it suffices to find a $t$ (and $s$ for SLP) for which maximal rank fails for a general $L \in [R]_1$. When WLP does hold, one can still study the {\it non-Lefschetz locus}  \cite{BMMN}, which is the set of linear forms (with a natural scheme structure) for which maximal rank fails, even if it does hold for the general $L$.

Now, fix a set of points $Z = \{P_1,\ldots, P_r\}$ and let $L_1,\dots,L_r$ be the dual linear forms. Let $P$ be another point, and
let $L\in [R]_1$ be its dual linear form. 
The bridge to Lefschetz theory is provided by the exact sequence \eqref{OrigMacDualSeqPrime}, which for convenience we recall here with the current notation. 
\[
[R/(L_1^d,\dots,L_r^d)]_{d-1} \stackrel{\times L}{\longrightarrow} [R/(L_1^d,\dots,L_r^d)]_d \rightarrow [R/(L_1^d,\dots,L_r^d, L)]_d \rightarrow 0.
\]
Ideals generated by powers of linear forms have been extensively studied from the perspective of Lefschetz properties; 
 (see for instance \cite{DIV}, \cite{FM}, \cite{HSS}, \cite{MR-SLP}, \cite{MMN}, \cite{MN-L2},  \cite{SS} to name a few). Furthermore, \cite[Proposition 2.17]{HMNT} relates unexpected hypersurfaces for a set of points to the failure of the Strong Lefschetz Property for the corresponding quotient by the ideal of powers of linear forms. Most of these  papers use Macaulay duality as an important tool, so it is not new that the above sequence is related to Lefschetz properties. Here we make explicit the link between our $d$-Weddle schemes and the non-Lefschetz loci of \cite{BMMN}.

By definition, the last graded piece in the sequence above measures the failure of surjectivity of multiplication by $L$ from degree $d-1$ to degree $d$:
\[
\dim_\field [R/(L_1^d,\dots,L_r^d, L)]_d
=
\dim_\field \coker\!\left(
[R/(L_1^d,\dots,L_r^d)]_{d-1} \xrightarrow{\times L} [R/(L_1^d,\dots,L_r^d)]_d
\right).
\]
On the other hand, we have already observed that this same dimension is equal to
$\dim_\field [I(P_1)\cap\cdots\cap I(P_r)\cap I(P)^d]_d$,
i.e., to the dimension of the space of degree $d$ cones with vertex $P$ containing $Z$.
Consequently, this dimension is larger than the generic value if and only if the multiplication map $\times L$ drops rank compared to its generic behavior. 
Equivalently, the locus of points $P$ where $Z$ admits ``more than expected'' degree $d$ cones with vertex $P$ is precisely the locus of linear forms $L$ for which the map
$[R/(L_1^d,\dots,L_r^d)]_{d-1}\xrightarrow{\times L}[R/(L_1^d,\dots,L_r^d)]_d$
fails to have maximal rank.

In particular, the support of the $d$-Weddle scheme coincides with the non-Lefschetz locus (in the sense of \cite{BMMN}) for multiplication from degree $d-1$ to degree $d$ in the quotient $R/(L_1^d,\dots,L_r^d)$, and the determinantal scheme structure we place on the $d$-Weddle locus agrees with the non-Lefschetz scheme structure of \cite{BMMN}.
Thus, from this perspective, $d$-Weddle schemes provide a geometric incarnation of the non-Lefschetz locus associated to uniform powers of linear forms.

Recall from \cite{HMNT} that a set of points $Z$ in $\mathbb P^n$  admits an unexpected cone of degree $d$ if 
\[\dim_{\mathbb C}[I(Z)\cap I(P)^d]_d>\max\left\{0,\dim_{\mathbb C}[I(Z)]_d- \binom{n+d-1}{n}\right\},\]
where $P$ is a general point in $\mathbb P^n$.

Equivalently, using Macaulay duality as described in Section \ref{sec:Macaulay duality},  $Z$ admits unexpected cones in degree $d$ precisely when the multiplication map
$[R/(L_1^d,\dots,L_r^d)]_{d-1}\xrightarrow{\times L}[R/(L_1^d,\dots,L_r^d)]_{d}$
fails to have maximal rank for a general linear form $L$.

This allows one to translate results on unexpected cones into failures of WLP for quotients by powers of linear forms.

We have from \cite[Theorem 3.5]{CM} that $(a,b)$-grids have unexpected cones in degree $a$, and also in degree $b$ provided $a,b \geq 3$.  
We have the following result.

\begin{corollary}
Let $Z$ be an $(a,b)$-grid with $b \geq a\geq 2$ and $b \geq 3$, and let $L_1,\dots,L_{ab}$ be the dual linear forms. Then $R/(L_1^a,\dots,L_{ab}^a)$ fails the Weak Lefschetz Property from degree $a-1$ to degree $a$, and if $b \geq a \geq 3$ then  $R/(L_1^b,\dots,L_{ab}^b)$ fails the Weak Lefschetz Property from degree $b-1$ to degree~$b$.
\end{corollary}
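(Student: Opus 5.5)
The plan is to combine the cited result \cite[Theorem 3.5]{CM} with the dictionary between unexpected cones and failure of maximal rank developed in Section~\ref{sec:Macaulay duality} and recalled above. Fix $d\in\{a,b\}$ for which $Z$ admits an unexpected cone of degree $d$. By \cite[Theorem 3.5]{CM} this holds for $d=a$ under $b\ge a\ge 2$, $b\ge 3$, and for $d=b$ under $b\ge a\ge 3$. Let $P$ be a general point with dual form $L$. Unexpectedness says
\[
\dim[I(Z)\cap I(P)^d]_d>\max\left\{0,\dim[I(Z)]_d-\binom{n+d-1}{n}\right\},
\]
with $n=3$ here.

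Next, translate both sides through the exact sequence \eqref{OrigMacDualSeqPrime} in the form using $R/(L_1^d,\dots,L_{ab}^d)$. Write $A=R/(L_1^d,\dots,L_{ab}^d)$. Since $d\ge 2$, the ideal has no generators in degree $d-1$, so $\dim A_{d-1}=\binom{n+d-1}{n}$. By Macaulay duality $\dim A_d=\dim[I(Z)]_d$. Also, $\dim[I(Z)\cap I(P)^d]_d=\dim\coker(\times L\colon A_{d-1}\to A_d)$.

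The cokernel has dimension $\dim A_d-\rank(\times L)$. A map of maximal rank has
\[
\rank=\min\{\dim A_{d-1},\dim A_d\},
\]
so its cokernel has dimension exactly $\max\{0,\dim A_d-\dim A_{d-1}\}$. The unexpectedness inequality therefore says that $\times L$ fails to have maximal rank from degree $d-1$ to degree $d$ for general $L$. By semicontinuity (maximal rank being an open condition), it then fails for every $L$. Hence $A$ fails WLP in that degree.

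The only point needing care is that the hypotheses of \cite[Theorem 3.5]{CM} match the two cases of the corollary. In particular, for $d=a=2$ we need $b\ge3$ so that the $(2,b)$-grid has an unexpected cone of degree $2$. This is consistent with the $(2,3)$-grid of Subsection~\ref{not max minors}, where $\det=0$ showed failure of maximal rank for a $4\times4$ map. Otherwise the argument is a direct translation and I expect no real obstacle beyond checking the dimension identification $\dim A_{d-1}=\binom{n+d-1}{n}$ for $d\ge2$.
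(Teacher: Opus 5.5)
Your proposal is correct and is essentially the paper's argument: the paper also derives the corollary by combining \cite[Theorem 3.5]{CM} with the observation, via the Macaulay duality sequence \eqref{OrigMacDualSeqPrime}, that an unexpected cone of degree $d$ means exactly that $\times L$ from degree $d-1$ to degree $d$ fails maximal rank for general $L$. Your dimension bookkeeping ($\dim A_{d-1}=\binom{n+d-1}{n}$, $\dim A_d=\dim[I(Z)]_d$) and your semicontinuity step spell out what the paper leaves implicit.
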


\section{Future directions and open problems}\label{sec:Future}
\subsection{Projecting 8 points in $\mathbb P^4$ to a complete intersection}

For a general set $Z$ of 8 points in $\PP^4$ there is a somewhat mysterious  arithmetically Cohen-Macaulay (ACM) curve $C$ of degree 7 in $\PP^4$ such that the projection  of $Z$ from any point of $C$ {\it is} a complete intersection of quadrics in $\mathbb P^3$.  

\begin{remark} \label{proj8fromP4}
 Let $Z$ be a general set of 8 points in $\PP^4$.  Using the computer with random points we found that the 2-Weddle locus has the expected codimension 3. More precisely, there is a curve from the points of which $Z$ projects to a set of points lying on a net of quadrics in $\PP^3$.

Applying the methods used in this paper, especially the exact sequence \eqref{OrigMacDualSeqPrime}  and Lemma~\ref{J1 lemma}, we see that the 2-Weddle locus is a curve of degree 35. However, it is clear that the 28 lines joining two points of $Z$ are all components of this locus. What remains is a curve of degree~7. 
We have confirmed by computer calculation that this curve is smooth of genus 3 and is arithmetically Cohen-Macaulay, lying on a surface of degree 3. But we do not see a geometric reason for this fact, or see how to visualize this curve from a geometric point of view.
\end{remark}

\subsection{Generalizing the Weddle condition} 

Given a finite set of points $Z\subset \PP^n$ and projections $\pi_Q\colon\PP^n\dashrightarrow \PP^{n-1}$, one can replace the projection-to-hypersurface condition in the definition of Weddle loci by the requirement that $\pi_Q(Z)$ lie on a subvariety of prescribed type, leading to new families of exceptional loci of projection centers.

For example, a natural extension of the Weddle philosophy is to replace
projection from $\PP^3$ to a plane conic by projection from $\PP^n$ to a rational normal curve of degree $n-1$ in $\PP^{n-1}$. This is done in Appendix \ref{app: GO}, contributed by G.\ Ottaviani.

\subsection{Projecting from $\mathbb P^n$ to $\mathbb P^m$}
For a set of points $Z$ in $\mathbb P^n$ one can define an analogous Weddle locus/scheme by looking at the projection map 
$$\pi_L\colon \mathbb P^n \dashrightarrow \mathbb P^m$$
for $m<n-1$ where $L$ is a linear space of projective dimension $n-1-m$.

The first case of interest would be projecting 6 points in $\mathbb P^4$ in LGP to $\mathbb P^2$.

For $Z\subset \PP^4$ a set of six points in linear general position, consider projection
$\pi_L\colon \PP^4\dashrightarrow \PP^2$ from a line $L\simeq \PP^1$, so that $L$ varies in
the Grassmannian $\GGr(1,4)$. For a general choice of $L$ avoiding $Z$, the image
$\pi_L(Z)\subset \PP^2$ is a set of six points in general position, hence it does \emph{not}
lie on any conic. Therefore the condition ``$\pi_L(Z)$ lies on a conic'' is a genuine
degeneracy condition.

This suggests a Grassmannian variant of the Weddle construction: define $\mathcal W\subset \GGr(1,4)$
to be the closure of the locus of lines $L$ such that
$h^0(\PP^2,\mathcal I_{\pi_L(Z)}(2))\ge 1$, i.e.,\ such that the six projected points satisfy an
unexpected quadratic relation, namely they lie on a conic. One may further consider the locus where
$h^0(\PP^2,\mathcal I_{\pi_L(Z)}(2))\ge 2$, corresponding to a pencil of conics through $\pi_L(Z)$.
It would be interesting to describe the geometry (and, e.g., degree in Pl\"ucker coordinates)
of such determinantal loci on $\GGr(1,4)$.

Some initial investigation suggests that $\mathcal{W}$ is defined by a section in the fourth power of the ample generator of $\Pic(\GGr(1,4))$, i.e., it is a subvariety of dimension $5$ and degree $20$ in $\PP^9$, where the Grassmanian is embedded by the Pl\"ucker coordinates. 

\subsection{Positive characteristic}
Throughout this paper we work over $\CC$, but the definition of the $d$-Weddle locus (and the
associated $d$-Weddle scheme) makes sense over any algebraically closed field, including
fields of positive characteristic.
It would be interesting to investigate which new phenomena occur in this setting.

Two natural sources of characteristic-dependent behavior are:
(i) the use of differential operators in the interpolation--matrix description (which may require
divided powers in characteristic $p$), and
(ii) the behavior of the Lefschetz properties for algebras of the form
$R/(L_1^d,\ldots,L_r^d)$, which is known to be sensitive to the characteristic.
In particular, one may ask when the determinantal scheme structures considered here remain
reduced (or equidimensional), how their degrees compare with the characteristic-zero formulas,
and whether new families of nonreduced or higher-multiplicity components appear.

\subsection{The singular locus of the Weddle scheme of a general set of points}

We have seen that a general set of $\binom{d+n}{d}$ points in $\mathbb P^{n+1}$ has a $d$-Weddle scheme $\Sigma$ that is a hypersurface of degree $\binom{d+n}{n+1}$ (Theorem \ref{WeddleSchemeSigmaThm}). What is the singular locus of this hypersurface?

In Remark \ref{Remark4.9} and Appendix \ref{AppendixA} we show that the singular locus of the Weddle surface associated to 6 points in linear general position in $\PP^3$ ($d=n=2$)  consists precisely of those six points.  Experimentally we have verified (over a large finite field) that the singular locus of the 2-Weddle hypersurface for 10 ``random" points in $\mathbb P^4$ consists of 60 distinct points. These consist of the 10 original points together with 50 additional points coming from the fact that the subminors of a general $5 \times 5$ matrix of linear forms is a reduced zero-dimensional scheme of degree 50 (although we still do not understand the corresponding geometric connection between these 50 singular points and the original set of points). For 15 ``random" points in $\mathbb P^5$ we have verified experimentally (again, over a large finite field)  that the singular locus of the 2-Weddle hypersurface includes a curve of degree 105 defined by  the submaximal minors, and the 15 original points.

Given a general set of $\binom{d+n}{d} $ points in $\PP^{n+1}$, what is the dimension and what is the degree of the singular locus of the $d$-Weddle hypersurface? How does its geometry relate to the original set of points? Notice that whenever the codimension of the singular locus of $\Sigma$ is $\geq 2$, $\Sigma$ must be irreducible.

\vspace{5pt}

\noindent{\bf Acknowledgements:} Chiantini and Favacchio are members of the Italian GNSAGA-INDAM. The work of the Favacchio was supported by the funding PREMIO\_SINGO\-LI\_RIC\_[2025] from the Department of Engineering, University of Palermo. Migliore was partially supported by Simons Foundation grant \#839618. All computations were performed using  \cite{CoCoA, DGPS, M2}. And we thank Giorgio Ottaviani for contributing Appendix~\ref{app: GO}.

\appendix
\section{Singular locus of Weddle surface}\label{AppendixA}
{\setstretch{1}
Here is Macaulay2 code for computing the singular locus of the Weddle surface for six points in LGP. 
We may choose coordinates such that the points are $[0:0:0:1]$, $[0:0:1:0]$, $[0:1:0:0]$, $[1:0:0:0]$, $[1:1:1:1]$ and $[a:b:c:d]$, where
by linear general position $[a:b:c:d]$ is not on any plane defined by any of the other three points and so $abcd(a-b)(a-c)(a-d)(b-c)(b-d)(c-d)\neq0$. The result is that
the singular locus, as given by the primary decomposition $P$, consists of the six points.

\begin{verbatim}
R=QQ[a,b,c,d,x,y,z,w];
H=d*(b-c)*x^2*y*z+d*(c-a)*x*y^2*z+d*(a-b)*x*y*z^2+c*(d-b)*x^2*y*w+
c*(a-d)*x*y^2*w+b*(c-d)*x^2*z*w+a*(d-c)*y^2*z*w+b*(d-a)*x*z^2*w+
a*(b-d)*y*z^2*w+c*(b-a)*x*y*w^2+b*(a-c)*x*z*w^2+a*(c-b)*y*z*w^2;
I1=ideal(diff(x,H),diff(y,H),diff(z,H),diff(w,H));
I2=time I1:ideal(a);
I3=time I2:ideal(b);
I4=time I3:ideal(c);
I5=time I4:ideal(d);
I6=time radical(I5);
I7=time I6:ideal(a-b);
I8=time I7:ideal(a-c);
I9=time I8:ideal(a-d);
I10=time I9:ideal(b-c);
I11=time I10:ideal(b-d);
I12=time I11:ideal(c-d);
P=time primaryDecomposition I12;
\end{verbatim}
}


\section{The Weddle locus to a rational normal curve\for{toc}{. \em (Giorgio Ottaviani)}}\label{app: GO}

\begin{center}
\textit{By Giorgio Ottaviani}
\end{center}

We compute the closure of the locus of
centers of projection from which $n+3$ points in $\PP^n$ in linear general position map to  $n+3$ points 
lying on a rational normal curve in $\PP^{n-1}$. It is a surface of degree $2^n-n-1$.
The proof is an application of a result obtained in collaboration with Rekha Thomas in \cite{OT}.

\subsection{Invariant theory of $2\times n\times (n+1)$ tensors}\label{subsec:2n(n+1)}
 The content of this subsection goes back to Weierstrass. 
 The space $\PP(\CC^2\otimes\CC^n\otimes\CC^{n+1})$ is a prehomogeneous space under the action
 of $SL(2)\times SL(n)\times SL(n+1)$; this means there is a dense orbit, see \cite{M}. The space is prehomogeneous even under the action of the subgroup 
 $SL(n)\times SL(n+1)$, but we do not need this fact. The complement of the dense orbit is the hypersurface defined by the hyperdeterminant,
 of degree $n(n+1)$, which is the generator of the invariant ring $\CC[\CC^2\otimes\CC^n\otimes\CC^{n+1}]^{
 SL(2)\times SL(n)\times SL(n+1)}$, see \cite[chap. 14, \S 3]{GKZ} and \cite{O} and it is a regular invariant according to \cite[\S 4.2]{M}.
It is a boundary format hyperdeterminant.
 A tensor with nonzero hyperdeterminant is called nondegenerate.
 \begin{proposition}\label{prop:1generic} Let $t\in \CC^2\otimes\CC^n\otimes\CC^{n+1}$ with entries $t_{ijk}$ for $i=0,1$,
 $j=0,\ldots, n-1$, $k=0,\ldots, n$. The following are equivalent
 \begin{itemize}
 \item $t$ is nondegenerate
 \item $t$ is $1$-generic, namely the two linear forms $\sum_{k=0}^nt_{0,j,k}x_k$ and $\sum_{k=0}^nt_{1,j,k}x_k$
 are independent for $j=0,\ldots, n-1$.
 \end{itemize}
 \end{proposition}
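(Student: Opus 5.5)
Throughout, I read ``$1$-generic'' in the standard invariant sense: for every $[\lambda:\mu]\in\PP^1$, the $n\times(n+1)$ matrix $A(\lambda,\mu)=\lambda A_0+\mu A_1$ has full rank $n$. Here $A_i=(t_{ijk})_{j,k}$. Equivalently, for every $(\lambda,\mu)\neq 0$ the $n$ linear forms $\sum_k(\lambda t_{0jk}+\mu t_{1jk})x_k$, $j=0,\dots,n-1$, are linearly independent. The literal wording ``for $j=0,\dots,n-1$'' is not $SL(n)$-invariant, so I take this pencil formulation as the intended one.

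With this reading, the plan is to use the definition of the hyperdeterminant as the equation of the dual variety of the Segre variety $\PP^1\times\PP^{n-1}\times\PP^n$. In this boundary format the dual variety is a hypersurface, by \cite[chap.~14]{GKZ}. So $t$ is degenerate if and only if there exist nonzero $x\in\CC^2$, $y\in\CC^n$, $z\in\CC^{n+1}$ such that all three contractions vanish:
\[
t(x,y,\cdot)=0,\qquad t(x,\cdot,z)=0,\qquad t(\cdot,y,z)=0 .
\]
In matrix terms these conditions read
\[
y^{T}A(x)=0,\qquad A(x)z=0,\qquad y^{T}A_0z=y^{T}A_1z=0 .
\]
The proof then reduces to showing that this system has a nonzero solution exactly when the pencil drops rank somewhere.

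If $t$ is degenerate, then $y^TA(x)=0$ with $y\neq0$. Hence $\operatorname{rank}A(x)<n$, and $t$ is not $1$-generic.

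Conversely, suppose $\operatorname{rank}A(x_0)\le n-1$ for some $x_0\neq0$. Choose $y\neq0$ with $y^TA(x_0)=0$. Since $A(x_0)$ is $n\times(n+1)$ of rank at most $n-1$, its kernel $K$ has dimension at least $2$. Pick $x_1$ with $\{x_0,x_1\}$ a basis of $\CC^2$. For $z\in K$ we have $y^TA(x_0)z=0$ automatically, and the single linear condition $y^TA(x_1)z=0$ cuts out a nonzero subspace of $K$. Choosing $z\neq0$ there, we get $y^TA(x)z=0$ for all $x$, i.e.\ $y^TA_0z=y^TA_1z=0$. All three conditions then hold with $x=x_0$, so $t$ is degenerate.

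The main obstacle is the first step: justifying that vanishing of the boundary-format hyperdeterminant is equivalent to the existence of such a singular point $(x,y,z)$. This is the definition of the dual variety combined with the fact that in boundary format ($n+1=1+(n-1)+1$) the dual variety is a hypersurface. I would simply cite \cite[chap.~14, \S3]{GKZ} for this, rather than reprove it. The remaining steps are the elementary linear algebra above: the kernel-dimension count (the kernel has dimension at least $2$) is what makes the converse work.
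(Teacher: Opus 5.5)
Your argument is correct, and it takes a genuinely different route from the paper. The paper gives no computation. It cites \cite[Prop.~9.4]{Har} together with the prehomogeneity of $\CC^2\otimes\CC^n\otimes\CC^{n+1}$, whose dense orbit has the hyperdeterminant hypersurface as complement, so both conditions are implicitly identified with membership in the dense orbit.

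You instead work directly from the definition in \cite[chap.~14]{GKZ} of the hyperdeterminant as the equation of the dual of the Segre variety $\PP^1\times\PP^{n-1}\times\PP^n$. Degeneracy is then the existence of nonzero $(x,y,z)$ killing all three contractions. One implication is immediate from $y^TA(x)=0$. The other is the count $\dim\ker A(x_0)\ge 2$, which leaves room for the single extra condition $y^TA(x_1)z=0$.

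What your route buys is self-containedness. It needs only that the boundary-format dual variety is a hypersurface, and that the locus of tensors admitting such an $(x,y,z)$ is closed. That holds because the Segre variety is smooth and the conormal incidence projects properly. It also works tensor by tensor, with no orbit classification or irreducibility argument. What the paper's route buys is the extra structural fact that the nondegenerate tensors form a single orbit, which is the viewpoint behind the rational normal curve description in Lemma~\ref{lem:ratnorm}.

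Your reading of $1$-genericity is also the right one. The row-by-row wording is not invariant and would make the statement false: for $n=2$, take $A_0$ with rows $e_0,e_1$ and $A_1$ with rows $e_1,e_0$. This passes the row-by-row test, yet $A_0+A_1$ has rank $1$. Your pencil condition is equivalent to requiring, for every nonzero $\mu\in\CC^n$, that the two forms $\sum_j\mu_j\sum_k t_{0jk}x_k$ and $\sum_j\mu_j\sum_k t_{1jk}x_k$ be independent. Both say that $t(\lambda,\mu,\cdot)\neq 0$ for all nonzero $\lambda,\mu$.
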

 \begin{proof}
 \cite[Prop. 9.4]{Har}, joint to the fact that the tensor space is prehomogeneous. 
 \end{proof}
 
 \subsection {Equations of rational normal curves and their Weddle locus}
 Denote by ${\mathcal E}_n$ the set of $n+2$ points in $\PP^n$ with coordinates $\{e_0,\ldots, e_n,\sum_{i=0}^ne_i\}$. Every set of $n+2$  points in LGP coincides with ${\mathcal E}_n$ after a linear change of coordinates.

 The following is an expansion of \cite[Remark 2 \S 4]{OT}.
 \begin{lemma}\label{lem:ratnorm}
  Let $a=\begin{pmatrix}a_0\\ \vdots\\a_n\end{pmatrix}\in\PP^n$ such that ${\mathcal E}_n\cup\{a\}$ is in LGP.
 Let $b=\begin{pmatrix}b_0\\ \vdots\\b_n\end{pmatrix}$ be a point in $\PP^n$.
 Let $A$ be the $(n+1)\times 3$ matrix with $i$-th row $$(a_i, b_i, a_ib_i)\textrm{\ \ for\ }i=0,\ldots, n.$$
 The rational normal curve through the $n+3$ points ${\mathcal E}_n\cup\{a\}$ can be described as the variety
$$\{b\in\PP^n \ | \ \mathrm{rk}(A)\le 2\}.$$
\end{lemma}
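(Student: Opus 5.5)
The plan is to show two things: the locus $V=\{b : \mathrm{rk}(A)\le 2\}$ contains all $n+3$ points of ${\mathcal E}_n\cup\{a\}$, and $V$ is a rational normal curve of degree $n$. A rational normal curve through $n+3$ points in LGP is unique, so these together give the lemma.

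For the containment, I would check the points one at a time. The columns of $A$ are $a$, $b$ and $a\ast b$, where $a\ast b$ is the coordinatewise product.
\begin{itemize}
\item If $b=e_i$, then $a\ast b=a_ie_i$ is proportional to $b$, so the rank is at most $2$.
\item If $b=\sum e_i$, then $a\ast b=a$, so again the rank is at most $2$.
\item If $b=a$, the first two columns coincide.
\end{itemize}
Hence all $n+3$ points lie in $V$.

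To identify $V$, I would parametrize it. Generically the rank condition says $a\ast b=\lambda a+\mu b$ for scalars $\lambda,\mu$. Coordinatewise this reads $a_ib_i-\mu b_i=\lambda a_i$. Solving gives $b_i=\lambda a_i/(a_i-\mu)$, and after rescaling $b_i=a_i/(a_i-\mu)$ with $\mu$ ranging over $\PP^1$. Clearing denominators gives
\[
b_i=a_i\prod_{j\ne i}(a_j-\mu),
\]
which is a map $\PP^1\to\PP^n$ given by polynomials of degree $n$ in $\mu$. By LGP all $a_i$ are nonzero and pairwise distinct, so these $n+1$ polynomials are linearly independent. Indeed, they are scalar multiples of the Lagrange-type basis $\prod_{j\ne i}(a_j-\mu)$, which vanish at distinct sets of nodes. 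The image is therefore a rational normal curve of degree $n$.

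Evaluating the parametrization confirms the points:
\begin{itemize}
\item $\mu=a_i$ gives $e_i$;
\item $\mu=0$ gives $b_i\propto a_i$ up to the common factor $\prod_j a_j$, i.e.\ the point $a$;
\item $\mu=\infty$ gives $b=(1,\dots,1)$.
\end{itemize}

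The main obstacle is showing that $V$ is exactly this curve, with no extra components where the rank drops. I would split into two cases according to whether the columns $a$ and $b$ are independent.
\begin{itemize}
\item If they are independent, rank $\le 2$ forces $a\ast b$ into their span, and the computation above applies. The only subtlety is $\mu=a_i$ with $\lambda=0$. Then $b_j(a_j-a_i)=0$ for all $j$, so by distinctness of the $a_j$ we get $b=e_i$, which is already on the curve.
\item If $b\propto a$, then $b=a$, which is on the curve.
\end{itemize}
So set-theoretically $V$ is the rational normal curve.

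For the scheme structure, I would note that the $3\times3$ minors of $A$ are quadrics in $b$. The curve has codimension $n-1$, equal to the expected codimension $(n+1-2)(3-2)$, so by the Eagon--Northcott facts recalled before Lemma~\ref{J1 lemma} the minors define an ACM scheme of degree $\binom{n+1}{2}/\binom{2}{2}$? This must be checked carefully, since $A$ is linear in $b$ only in two of its three columns. I expect to instead use that the ideal is generated by the $2\times2$ minors of an equivalent $2\times n$-type matrix, as in the classical determinantal description of rational normal curves, which is reduced and of degree $n$.
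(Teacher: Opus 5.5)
Your argument is correct for what the lemma asserts, namely an equality of varieties, but it follows a different route from the paper.

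\textbf{The paper's route.} Since $a_0\neq 0$, the paper row-reduces $A$ using its first row. This turns $\mathrm{rk}(A)\le 2$ into $\mathrm{rk}\le 1$ for the $n\times 2$ matrix with rows $(a_0b_i-a_ib_0,\ a_i(b_i-b_0))$, $i=1,\dots,n$. That matrix is linear in $b$, and LGP makes it $1$-generic. By the classical determinantal description, its $2\times 2$ minors then cut out a rational normal curve. The paper finishes by checking the $n+3$ points and invoking uniqueness.

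\textbf{Your route, compared.} You solve the dependency $a\ast b=\lambda a+\mu b$ directly and get the explicit parametrization $\mu\mapsto\bigl(a_i/(a_i-\mu)\bigr)_i$. This is more elementary and self-contained. It needs no $1$-genericity, you do not need uniqueness to see that $V$ is a rational normal curve, and it gives the parameters of the marked points. The paper's route is shorter given the cited result, and it works at the level of ideals. After the reduction, the $3\times 3$ minors avoiding the first row vanish, so $I_3(A)$ equals the ideal of $2\times 2$ minors of that $n\times 2$ linear matrix.

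\textbf{Corrections.}
\begin{itemize}
\item Your evaluation at $\mu=0,\infty$ is swapped. At $\mu=0$ the $i$-th coordinate $a_i\prod_{j\neq i}a_j=\prod_j a_j$ does not depend on $i$, so you get $(1,\dots,1)$. The leading coefficient $(-1)^na_i$ shows that $\mu=\infty$ gives $a$. This is harmless, since you checked containment directly.
\item In the independent case you should also dispose of two subcases. The case $\lambda\neq 0$, $\mu=a_i$ is impossible because $a_i\neq 0$. The case $\lambda=0$, $\mu\notin\{a_0,\dots,a_n\}$ forces $b=0$.
\item Say why LGP gives $a_i\neq 0$ and $a_i\neq a_j$. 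Otherwise $n+1$ of the points lie on the hyperplane $x_i=0$, respectively $x_i=x_j$.
\item Drop the appeal to Lemma~\ref{J1 lemma}. $A$ is not a matrix of linear forms, since its first column is constant. Also, the number $\binom{n+1}{2}$ it would produce contradicts degree $n$.
\end{itemize}
If you want the scheme-theoretic statement, the ``equivalent $2\times n$ matrix'' you anticipate is exactly the paper's row reduction. You would then need the standard fact that the $2\times 2$ minors of a $1$-generic $2\times n$ matrix generate a prime ideal, which your parametrization does not provide.
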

\begin{proof}
By assumption $a_0\neq 0$.
Gaussian elimination on the first row reduces the condition on the rank of $A$ to
\begin{equation}\label{eq:ratab}\mathrm{rk}\begin{pmatrix}a_0b_1-a_1b_0&a_1(b_1-b_0)\\
\vdots&\vdots\\
a_0b_i-a_ib_0&a_i(b_i-b_0)\\
\vdots&\vdots\\
a_0b_n-a_nb_0&a_n(b_n-b_0)\\
\end{pmatrix}\le 1,\end{equation}
which is a condition on $b\in\PP^n$.
This is a $n\times 2$ matrix with entries linear in $b$, hence it is a 
$2\times n\times (n+1)$ tensor as in \ref{subsec:2n(n+1)}.
The assumption that ${\mathcal E}_n\cup\{a\}$ is in LGP implies easily that the tensor is $1$-generic according to
Prop. \ref{prop:1generic}.
Hence the condition  (\ref{eq:ratab}) defines a rational normal curve, see \cite[Example 9.3]{Har}.
It is straightforward to check that (\ref{eq:ratab}) is satisfied when $b\in {\mathcal E}_n\cup\{a\}$,
hence the rational normal curve contains  ${\mathcal E}_n\cup\{a\}$ .
The thesis follows by the uniqueness of the rational normal curve through $n+3$ general points.
\end{proof}

\begin{remark} The condition of Lemma \ref{lem:ratnorm} is symmetric in $a$, $b$ and can be interpreted as the condition
that the $n+4$ points  ${\mathcal E}_n\cup\{a, b\}$ lie on a rational normal curve.
\end{remark}

\begin{example}
For $n=3$ the hyperdeterminant of the tensor in (\ref{eq:ratab}) can be computed according to
\cite[Chapt. 14 Theor. 3.3]{GKZ} or \cite[Theor. 4]{O} and it is (up to sign)
$a_0^3a_1a_2a_3\prod_{0\le i<j\le 3}(a_i-a_j)$, which vanishes if and only if ${\mathcal E}_3\cup\{a\}$ is not in LGP. 
\end{example}

The following is the main result of this appendix.

\begin{theorem}\label{thm:weddlerat}
 Let $a=\begin{pmatrix}a_0\\ \vdots\\a_n\end{pmatrix}\in\PP^n$ such that ${\mathcal E}_n\cup\{a\}$ is in LGP.
  Let $b=\begin{pmatrix}b_0\\ \vdots\\b_n\end{pmatrix}$ be a point in $\PP^n$.
 Let $N$ be the $n\times 3$ matrix with $i$-th row $$((a_i-a_n)b_i, a_nb_i-a_ib_n, (b_i-b_n)(a_nb_i-a_ib_n))\textrm{\ \ for\ }i=0,\ldots, n-1.$$
 (i) The Weddle locus  given by the closure of the locus of
centers of projection from which ${\mathcal E}_n\cup\{a\}$ maps to  $n+3$ points 
lying on a rational normal curve in $\PP^{n-1}$ can be described as the variety
$$\{b\in\PP^n \ | \ \mathrm{rk}(N)\le 2\}.$$
(ii) The Weddle locus in (i) is a surface of degree $2^n-n-1$.
\end{theorem}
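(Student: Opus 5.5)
The plan is to compute the projection from $b$ explicitly and then apply Lemma~\ref{lem:ratnorm} in $\PP^{n-1}$; this gives part~(i). For part~(ii) the degree will come from the Thom--Porteous (Eagon--Northcott) formula.

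\emph{Step 1: the projected configuration.} For $b$ with $b_n\neq 0$, project from $b$ by the linear map $y_i=b_nx_i-b_ix_n$ for $i=0,\dots,n-1$, which kills $b$. Under this map:
\begin{itemize}
\item $e_i\mapsto b_ne_i$ for $i<n$;
\item $e_n\mapsto u:=-(b_0,\dots,b_{n-1})$;
\item $\sum_i e_i\mapsto v:=(b_n-b_i)_i$;
\item $a\mapsto w:=(b_na_i-b_ia_n)_i$.
\end{itemize}
So the image is the coordinate points of $\PP^{n-1}$ together with three further points $u,v,w$.

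\emph{Step 2: reduce to Lemma~\ref{lem:ratnorm}.} Rescale coordinates by $\mathrm{diag}(1/b_i)$. This carries $u$ to the all-ones point, so the images of $e_0,\dots,e_{n-1}$ and $u$ become the frame $\mathcal E_{n-1}$. The points $v,w$ become $v'=((b_n-b_i)/b_i)_i$ and $w'=(w_i/b_i)_i$. Lemma~\ref{lem:ratnorm}, applied in $\PP^{n-1}$ with $a=v'$ and $b=w'$ (and using the symmetry Remark), says that the $n+3$ projected points lie on a rational normal curve iff the $n\times 3$ matrix with rows $(v'_i,w'_i,v'_iw'_i)$ has rank $\le 2$.

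Multiplying row $i$ by a suitable monomial in $b_i$, and performing elementary column operations (adding multiples of the first two columns to the third, and swapping or rescaling columns), should transform this matrix into $N$. Such operations do not change the ideal of $3\times 3$ minors away from $b_0\cdots b_n=0$.

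\emph{Step 3: genericity conditions and closure.} The lemma needs $\mathcal E_{n-1}\cup\{v'\}$ in LGP, and we also needed $b_i\neq 0$. This holds on a dense open set of centres. Taking closures then gives~(i), provided the minors of $N$ acquire no extra components supported on the coordinate hyperplanes $b_i=0$ or on the LGP-failure loci. I would check this directly, by restricting $N$ to $b_i=0$ and verifying that the rank stays $3$ at a general point there.

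\emph{Step 4: the degree.} The columns of $N$ have entries of degrees $1,1,2$ in $b$. So $N$ is a map $\mO(-1)^2\oplus\mO(-2)\to\mO^n$, and the locus $\{\mathrm{rk}\,N\le 2\}$ has expected codimension $(3-2)(n-2)=n-2$, i.e.\ it is a surface in $\PP^n$. When the codimension is the expected one, Porteous gives the class $c_{n-2}(\mO^n-\mO(-1)^2-\mO(-2))$. This is the coefficient of $h^{n-2}$ in $(1-h)^{-2}(1-2h)^{-1}$, namely
\[
\sum_{j=0}^{n-2}(j+1)\,2^{n-2-j}=2^n-n-1 .
\]
For $n=3$ this gives $4$, recovering the Weddle surface.

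\emph{Main obstacle.} The hard point is proving that the locus has exactly the expected dimension $2$, with no excess component. The lower bound is automatic from the determinantal bound. For the upper bound, the first approach I would try is a parameter count. Rational normal curves in $\PP^{n-1}$ through the $n+2$ points $\mathcal E_{n-1}\cup\{v'\}$ form an $(n-3)$-dimensional family. Hence passing through one more point $w'$ imposes $n-2$ conditions, so a general fibre of the incidence correspondence has the right dimension.

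Alternatively, one can argue via the $1$-genericity of Proposition~\ref{prop:1generic}: a $3$-fold component would force a $1$-dimensional family of the relevant tensors to degenerate, and one can exclude this by checking an explicit point $a$. Semicontinuity in $a$ then gives the result for all LGP $a$. If both approaches stall, I would verify the expected codimension for one explicit $a$ in each $n$ by showing some $3\times 3$ minors form a regular sequence of length $n-2$. However, the $n$-uniform statement really requires the parameter-count argument.
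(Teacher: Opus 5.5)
Your Steps 1, 2 and 4 follow the paper's proof: project from $b$ to $x_n=0$, rescale by $1/b_i$, apply Lemma~\ref{lem:ratnorm} in $\PP^{n-1}$, reduce to $N$ by column operations, and take the coefficient of $t^{n-2}$ in $1/((1-t)^2(1-2t))$ via Porteous for $\mO(-1)^2\oplus\mO(-2)\to\mO^n$.

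\emph{Column operations in Step 2.} The ones you describe go the wrong way round. The third column of $N$ is already your third column $v_iw_i$, where $v_i=b_n-b_i$ and $w_i=b_na_i-b_ia_n$; it is the first two columns that must change. Write $C_0,C_1,C_2$ for the columns after multiplying row $i$ by $b_i^2$. Then $(a_nC_0-C_1,\ C_1+C_2,\ C_2)=(-b_nN_0,\ -b_nN_1,\ N_2)$. This transformation has determinant $a_n$, so besides $b_0\cdots b_n\neq0$ you also need $a_n\neq0$, which LGP provides.

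\emph{Expected codimension.} The paper applies Porteous without verifying the expected codimension $n-2$, and without ruling out spurious components in (i). You are right that this is where real work remains, but your suggested fixes do not supply it.
\begin{enumerate}
\item The rational normal curve through the $n+2$ LGP points $\mathcal E_{n-1}\cup\{v'\}$ is unique; that is exactly what Lemma~\ref{lem:ratnorm} uses. It is not an $(n-3)$-dimensional family.
\item A count of conditions, or an incidence correspondence over the space of $a$'s, gives the expected dimension only for general $a$.
\item Fibre dimension is upper semicontinuous. Checking one explicit $a$ therefore proves the claim only on a Zariski-open set of $a$'s. A jump on a closed subset of the LGP locus is precisely what must be excluded.
\end{enumerate}

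\emph{A uniform argument that works.} Let $Z=\mathcal E_n\cup\{a\}$ and $P\in Z$. The $n+2$ points $\pi_P(Z\setminus\{P\})\subset\PP^{n-1}$ are in LGP. If $\pi_b(Z)$ lies on a rational normal curve $C$, then projecting further from $\pi_b(P)\in C$ shows that $\pi_P(b)$ lies in the Weddle locus of $\pi_P(Z\setminus\{P\})$. Induct on $n$, with base case $n=3$ given by Remark~\ref{ClassicalWeddleResult}. The Weddle locus of $Z$ then lies in a cone with vertex $P$ over a set of dimension at most $2$. So a component of dimension $\ge 3$ would have dimension exactly $3$ and be a cone with vertex $P$ for every $P\in Z$. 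Since the vertex set of a cone is linear, it would contain the span of $Z$, forcing the component to be $\PP^n$. This is absurd for $n\ge4$.

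\emph{Step 3 is too weak as phrased.} Let $B$ be the union of the hyperplanes spanned by $n$ points of $\mathcal E_n$; this is where Lemma~\ref{lem:ratnorm} does not apply. Every component of $\{\mathrm{rk}\,N\le2\}$ has dimension at least $2$, so what you actually need is $\dim\bigl(\{\mathrm{rk}\,N\le2\}\cap B\bigr)\le1$. Checking rank $3$ at a general point of each hyperplane in $B$ only excludes the hyperplanes themselves, which is enough only for $n=3$.
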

\begin{proof} The projection $\pi_b$ from $b$ to the hyperplane $b_n=0$ satisfies

$\pi_b(e_i)=e_i$ for $i=0,\ldots, n-1$, $\pi_b(e_n)=\begin{pmatrix}b_0\\
\vdots\\
b_{n-1}\end{pmatrix}$,

 $\pi_b(\sum_{i=0}^ne_i)=\begin{pmatrix}
\vdots\\
b_{i}-b_n\\
\vdots\end{pmatrix}$,
$\quad\pi_b(a)=\begin{pmatrix}\vdots\\
b_ia_n-b_na_i\\
\vdots\end{pmatrix}$.
With a linear change of coordinates $x_i'=x_i/b_i$ for $i=0,\ldots, n-1$, these images
become
${\mathcal E}_{n-1}, \begin{pmatrix}\vdots\\(b_i-b_n)/b_i\\ \vdots\end{pmatrix}$, $\begin{pmatrix}\vdots\\(b_ia_n-b_na_i)/b_i\\ \vdots\end{pmatrix} $.
 
 By Lemma \ref{lem:ratnorm}, these points lie on a rational normal curve in $\PP^{n-1}$ if and only if,
 given 
 \begin{equation}
 B=\begin{pmatrix}
 \vdots&\vdots&\vdots\\
 (b_i-b_n)b_i&(b_ia_n-b_na_i)b_i&(b_i-b_n)(b_ia_n-b_na_i)\\
 \vdots&\vdots&\vdots\\
 \end{pmatrix}\end{equation}
 we have $\mathrm{rk}(B)\le 2$.
 
 Call $B=\begin{pmatrix}B_0&B_1&B_2\end{pmatrix}$
 and perform the following column operation
 $\begin{pmatrix}B_0a_n-B_1&B_1-B_2&B_2\end{pmatrix}.$
 After a straightforward computation, get the $n\times 3$ matrix with $i$-th row $$(b_n(a_i-a_n)b_i, b_n(a_nb_i-a_ib_n), (b_i-b_n)(a_nb_i-a_ib_n))\textrm{\ \ for\ }i=0,\ldots n-1.$$ Dividing by $b_n$ the first two columns, get (i). Note that every maximal minor of $N$ has
still $a_n$ as a factor. Removing such a factor, each maximal minor of $N$ has bidegree $(2,4)$ in $(a, b)$.

 In order to prove (ii), consider that the Weddle locus is the degeneracy locus of a bundle map on $\PP^n$
 $$\mO(-1)^2\oplus\mO(-2)\to\mO^n.$$
 
 By Giambelli-Thom-Porteous formula we wish the coefficient of $t^{n-2}$  in the ratio of Chern polynomials $\frac{1}{(1-t)^2(1-2t)}$.
 
We compute
 $$\frac{1}{(1-t)^2(1-2t)}= \frac{-2}{1-t}-\frac{1}{(1-t)^2}+\frac{4}{1-2t}=\sum_{n\ge 0}\left[(-2)-(n+1)+2^{n+2}\right]t^n$$
 which implies (ii). 
\end{proof}

\begin{remark} Denote by $S_n$ the Weddle surface in $\PP^n$ computed in Theorem \ref{thm:weddlerat}. It seems that the points in ${\mathcal E}_n\cup\{a\}$ have multiplicity $n-1$
and are the only singular points of $S_n$. An insight by Luca Chiantini is that projecting $S_n$ by one of its singular points we get a $2:1$ ramified covering of $S_{n-1}$.
Note this insight is compatible with the equality $\frac{1}{2}\left( \deg S_n-(n-1)\right) = \deg S_{n-1}$, which is verified by (ii) of Theorem \ref{thm:weddlerat}.
\end{remark}

\end{document}